\font\bbbld=msbm10 scaled\magstephalf
\newcommand{\bM}{\bar{M}}
\newcommand{\bfR}{\hbox{\bbbld R}}
\newcommand{\bfS}{\hbox{\bbbld S}}
\newcommand{\tF}{\tilde{F}}
\newcommand{\ol}{\overline}
\newcommand{\ul}{\underline}
\newtheorem{theorem}{Theorem}[section]
\newtheorem{lemma}[theorem]{Lemma}
\newtheorem{proposition}[theorem]{Proposition}
\newtheorem{corollary}[theorem]{Corollary}
 \theoremstyle{definition}
\theoremstyle{remark}
\newtheorem{remark}[theorem]{Remark}
\numberwithin{equation}{section}
\begin{document}
\setlength{\baselineskip}{1.2\baselineskip}

\title[Fully Nonlinear Elliptic Equations]
{Second Order Estimates and Regularity for \\
Fully Nonlinear Elliptic Equations \\ on Riemannian Manifolds}
\author{Bo Guan}
\address{Department of Mathematics, Ohio State University,
         Columbus, OH 43210}
\email{guan@math.osu.edu}

\begin{abstract}
We derive {\em a priori} second order estimates for solutions of a
class of fully nonlinear elliptic equations on Riemannian manifolds
under some very general structure conditions.
We treat both equations on closed manifolds, and the Dirichlet problem
on manifolds with boundary without any geometric restrictions to the 
boundary except being smooth and compact. As applications of these estimates
we obtain results on regularity and existence. 

{\em Mathematical Subject Classification (2010):}
  35B45, 35J15, 58J05.

{\em Keywords:} Fully nonlinear elliptic equations on Riemnnian manifolds;
{\em a priori} estimates; Dirichlet problem; subsolutions;
strict concavity property.

\end{abstract}

\maketitle

\bigskip

\section{Introduction}
\label{hess-I}
\setcounter{equation}{0}

\medskip

This is one of several papers in which we seek methods to derive {\em a priori}
estimates for fully nonlinear elliptic equations on real or complex manifolds.
Our techniques work for various classes of equations under conditions which
are near optimal in many situations.
In this paper we shall focus on the second order estimates for the
Hessian type equations on Riemannian manifolds.

Let $(M^n, g)$ be a compact Riemannian manifold of
dimension $n \geq 2$ with smooth boundary $\partial M$, and
 $\bM := M \cup \partial M$.
Let $f$ be a smooth symmetric function of $n$ variables and
$\chi$ a smooth $(0,2)$ tensor on  $\bM$.
We consider fully nonlinear equations of the form
\begin{equation}
\label{3I-10}
f (\lambda [\nabla^2 u + \chi]) = \psi \;\; \mbox{in $M$}
\end{equation}
where $\nabla^2 u$ denotes the Hessian of $u \in C^2 (M)$
and $\lambda [\nabla^2 u + \chi] = (\lambda_1, \cdots, \lambda_n)$ are
the eigenvalues of $\nabla^2 u + \chi$ with respect to the metric $g$.

Fully nonlinear equations of form \eqref{3I-10}
in $\bfR^n$ was first considered by Caffarelli, Nirenberg and Spruck in their
seminal paper \cite{CNS3}.
Following \cite{CNS3} we assume $f$ is defined
in a symmetric open and convex cone $\Gamma \subset \bfR^n$ with vertex
at the origin and boundary $\partial \Gamma \neq \emptyset$,
\begin{equation}
\label{3I-15}
\Gamma^+ \equiv
\{\lambda \in \bfR^n: \mbox{each component $\lambda_i > 0$}\} \subseteq \Gamma,
\end{equation}
and to satisfy the standard structure conditions:
\begin{equation}
\label{3I-20}
f_i = f_{\lambda_i} \equiv \frac{\partial f}{\partial \lambda_i} > 0 \;\;\;
\mbox{in $\Gamma$}, \;\;\;\; 1 \leq i \leq n,
\end{equation}
\begin{equation}
\label{3I-30}
\mbox{$f$ is a concave function},
\end{equation}
\begin{equation}
\label{3I-40}
\delta_{\psi, f} \equiv \inf \psi - \sup_{\partial \Gamma} f > 0; \;\;
 \mbox{where} \; \sup_{\partial \Gamma} f \equiv \sup_{\lambda_0 \in \partial \Gamma}
                   \limsup_{\lambda \rightarrow \lambda_0} f (\lambda).
\end{equation}

According to \cite{CNS3}
condition (\ref{3I-20}) ensures that equation (\ref{3I-10}) is elliptic
for solutions $u \in C^{2} (M)$ with $\lambda [\nabla^2 u + \chi] \in \Gamma$;
we shall call such functions {\em admissible},
while condition \eqref{3I-30} implies the function $F$ defined by
$F (A) = f (\lambda [A])$ to be concave for $A \in \mathcal{S}^{n\times n}$ with
$\lambda [A] \in \Gamma$, where $\mathcal{S}^{n\times n}$ is the set of
$n$ by $n$ symmetric matrices. By condition~\eqref{3I-40},
 equation~\eqref{3I-10} {\em becomes} uniformly elliptic once {\em a priori}
$C^2$ bounds are established for admissible solutions so that
one can apply the classical Evans-Krylov theorem to obtain $C^{2,\alpha}$
estimates. So these conditions are basically indispensable to the study of
equation~\eqref{3I-10}.

The most typical equations of form \eqref{3I-10} are given by
 $f = \sigma_k^{\frac{1}{k}}$ and
$f = (\sigma_k/\sigma_l)^{\frac{1}{k-l}}$, $1 \leq l <  k \leq n$ 
defined on the cone
\[ \Gamma_k = \{\lambda \in \bfR^n: \sigma_j (\lambda) > 0 \;
\mbox{for $1 \leq j \leq k$}\}, \]
where $\sigma_k$ is the $k$-th elementary symmetric
function 
\[ \sigma_k (\lambda) = \sum_{i_1 < \cdots < i_k}
\lambda_{i_1} \cdots \lambda_{i_k}, \;\; 1 \leq k \leq n.\]
These functions satisfy \eqref{3I-20}-\eqref{3I-30} and have other properties
which have been widely used in study of the corresponding equations;
see e.g. \cite{CNS3},
\cite{LiYY90}, \cite{Trudinger90}, \cite{LT94}, \cite{Wang94}, \cite{CW01}.

The Dirichlet problem for equation~\eqref{3I-10} in $\bfR^n$ was
extensively studied by
Caffarelli, Nirenberg and Spruck~\cite{CNS3}, Ivochkina~\cite{Ivochkina85},
Krylov~\cite{Krylov83}, Wang~\cite{Wang94}, Trudinger~\cite{Trudinger95},
Trudinger and Wang~\cite{TW99}, Chou and Wang~\cite{CW01},
and the author~\cite{Guan94}, ~\cite{Guan}, among many others.
In this paper we deal with equation~\eqref{3I-10} on general Riemannian manifolds.

Equation~\eqref{3I-10} was first studied by
Y.-Y. Li~\cite{LiYY90} on closed Riemannian manifolds,
followed by the work of Urbas~\cite{Urbas02}.

A central issue in solving equation~\eqref{3I-10} is to derive $C^2$ estimates
for admissible solutions, in view of the Evans-Krylov theorem.
We shall be mainly concerned with estimates for second derivatives.
Such estimates was first derived by Y.-Y. Li~\cite{LiYY90} for
equation~\eqref{3I-10}
with $\chi = g$ on closed manifolds of nonnegative sectional curvature.
Urbas~\cite{Urbas02} was able to remove the nonnegative curvature assumption.
In deriving the estimates, the presence of curvature creates terms which are
difficult to control.
As a result, in addition to (\ref{3I-20})-(\ref{3I-40})
both papers needed extra assumptions which excluded the case
$f = (\sigma_k/\sigma_l)^{1/(k-l)}$; see Section~\ref{3I-R} for more discussions
about the results of ~\cite{LiYY90} and \cite{Urbas02}.

In order to state our main results,
which cover the case $f = (\sigma_k/\sigma_l)^{1/(k-l)}$,
we first introduce some notation.

For $\sigma > \sup_{\partial \Gamma} f$, define
$\Gamma^{\sigma} = \{\lambda \in \Gamma: f (\lambda) > \sigma\}$, and
we shall only consider the case $\Gamma^{\sigma} \neq \emptyset$.
Let $\mathcal{C}_{\sigma}$ denote the tangent cone at infinity
to the level surface $\partial \Gamma^{\sigma}$ which is smooth and convex
by conditions~(\ref{3I-20}) and (\ref{3I-30}).
Let $\mathcal{C}_{\sigma}^+$ be the open component of
$\Gamma \setminus (\mathcal{C}_{\sigma} \cap \Gamma)$ containing $\Gamma^{\sigma}$.

Our first main result is the following global second order estimates.

\begin{theorem}
\label{3I-th4}
Let $\psi \in C^2 (M \times \bfR) \cap C^1 (\bar{M}  \times \bfR)$ and
$u \in C^4 (M) \cap C^2 (\bar{M})$ be an admissible solution of \eqref{3I-10}.
Suppose $a \leq u \leq b$ on $\bM$ and let
\[ \ul{\psi} (x) = \min_{a \leq z \leq b} \psi (x, z), \;\;
   \hat{\psi} (x) = \max_{a \leq z \leq b} \psi (x, z), \;\; x \in \bM. \]
In addition to (\ref{3I-20})-(\ref{3I-30}), assume
\begin{equation}
\label{3I-40'}
  \delta_{\ul{\psi}, f} = \inf_{\bM} \ul{\psi} - \sup_{\partial \Gamma} f > 0.
\end{equation}
and that there exists a function $\ul{u} \in C^2 (\bar{M})$ satisfying
\begin{equation}
\label{3I-200}
\lambda [\nabla^2 \ul{u} + \chi] (x) \in \mathcal{C}_{\hat{\psi} (x)}^+,
\;\; \forall \; x \in \bM.
\end{equation}
Then
\begin{equation}
\label{hess-a10}
\max_{\bar{M}} |\nabla^2 u| \leq
 C_1 \big(1 + \max_{\partial M}|\nabla^2 u|\big).
\end{equation}
In particular, if
$M$ is closed ($\partial M = \emptyset)$ then
\begin{equation}
\label{hess-a10c}
|\nabla^2 u| \leq C_2 e^{C_3 (u - \inf_{M} u)} \;\; \mbox{on $M$}
\end{equation}
where $C_1$, $C_2$ depend on $|u|_{C^1 (M)}$ but not on $1/\delta_{\ul{\psi}, f}$
and $C_3$ is a uniform constant (independent of $u$).
\end{theorem}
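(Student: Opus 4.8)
The plan is to establish the interior bound \eqref{hess-a10} by a maximum-principle argument applied to a suitable test function built from the largest eigenvalue of $W := \nabla^2 u + \chi$. Concretely, I would consider $\max_{x \in \bar M} \max_{\xi \in T_x M, |\xi|=1} (W_{\xi\xi}) \phi(|\nabla u|^2) e^{\varphi(u)}$, where $\phi$ and $\varphi$ are one-variable auxiliary functions to be chosen (an exponential in $u$ and an increasing function of the gradient norm, as in Urbas and the author's earlier work). Since the maximum of the largest eigenvalue is not smooth when eigenvalues collide, the standard device is to fix the point $x_0$ where the maximum of the whole expression occurs, choose geodesic normal coordinates at $x_0$ with $W(x_0)$ diagonal and $W_{11}(x_0)$ maximal, and then work with the smooth quantity $W_{11} \phi e^{\varphi}$ near $x_0$; its logarithm has a maximum at $x_0$, so first derivatives vanish and the second-derivative form is negative semidefinite there.

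Next I would contract the operator $L := F^{ij} \nabla_i \nabla_j$ (where $F^{ij} = \partial F/\partial A_{ij}$ evaluated at $W$, a positive-definite matrix by \eqref{3I-20}) against this negative semidefiniteness. Differentiating the equation $F(W) = \psi$ twice in the direction $e_1$ and commuting covariant derivatives produces: the good third-order term $F^{ij,kl} \nabla_1 W_{ij} \nabla_1 W_{kl}$, which is $\le 0$ by concavity \eqref{3I-30}; curvature terms of the form $F^{ij} R_{\cdot} \nabla^2 u$ and $\sum F^{ii} W_{11}$ coming from commuting $\nabla_1 \nabla_1$ past $\nabla_i \nabla_i$ on the manifold; and terms from differentiating $\psi$. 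The exponential factors $\phi e^{\varphi}$ contribute, via the vanishing of the first derivatives of the log, terms proportional to $\varphi' L(u)$ and $\phi' L(|\nabla u|^2)$ — and here $L(|\nabla u|^2) \ge 2 F^{ii} W_{ii}^2 - C(1+W_{11})\sum F^{ii}$ furnishes the crucial positive quantity $\sum F^{ii} \lambda_i^2$ that will absorb curvature errors of size $W_{11} \sum F^{ii}$, provided $\phi'$ is chosen comparable to $1/W_{11}$ (equivalently $\phi$ grows logarithmically, or one uses the standard choice making $\phi'/\phi$ of the right size). This is the mechanism by which Urbas removed the sign condition on the curvature; the delicate accounting is balancing these terms so that the final inequality forces $W_{11}(x_0)$ bounded.

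The main obstacle — and the reason the subsolution hypothesis \eqref{3I-200} and the cone $\mathcal C^+_{\hat\psi}$ enter — is controlling the terms in which $F^{ij}$ is very anisotropic: when some $\lambda_i$ are large and others comparatively small, $\sum F^{ii} \lambda_i^2$ need not dominate $W_{11}\sum F^{ii}$ by a large constant, which is exactly what kills the proof for $f = (\sigma_k/\sigma_l)^{1/(k-l)}$ under the older arguments. The remedy I would use is a concavity/convexity argument: at $x_0$, compare $F^{ij}(W) (W_{ij} - (\nabla^2\underline u + \chi)_{ij})$ using concavity of $F$ to get $F^{ij}(W_{ij} - \underline W_{ij}) \ge \psi - F(\underline W)$, and then exploit that \eqref{3I-200} places $\lambda[\underline W]$ in the open set $\mathcal C^+_{\hat\psi}$ strictly inside the cone of directions along which the level set $\partial\Gamma^{\hat\psi}$ is asymptotically headed. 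This yields a uniform lower bound of the form $\sum F^{ii}\lambda_i \ge \theta \sum F^{ii}$ or, better, a bound showing $\sum F^{ii}\lambda_i^2 \ge \theta\,\lambda_1 \sum F^{ii} - C$ with $\theta>0$ controlled — precisely the \emph{strict concavity property} referenced in the abstract. Feeding this into the inequality from step two closes the estimate.

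Finally, once \eqref{hess-a10} holds, the closed-manifold statement \eqref{hess-a10c} is immediate: with $\partial M = \emptyset$ the boundary term drops, and rerunning the argument while \emph{keeping} the factor $e^{\varphi(u)}$ with $\varphi(u) = -C_3(u - \inf_M u)$ (rather than discarding it) turns the bound at $x_0$ into $W_{11}(x) e^{-C_3(u(x) - \inf u)} \le C_2$ for all $x$, which is \eqref{hess-a10c}. Tracking constants through the concavity step shows $C_1, C_2$ depend only on $|u|_{C^1}$, the geometry of $M$, $\chi$, $\underline u$, and $\psi$, and in particular the dependence is through $\delta_{\underline\psi, f}>0$ only qualitatively (the estimate does not blow up as $\delta_{\underline\psi,f}\to 0$ because $\hat\psi$, not $\delta$, controls which cone $\underline u$ must land in), while $C_3$ can be taken to depend only on an upper bound for the relevant curvature and $|\chi|$, hence is uniform.
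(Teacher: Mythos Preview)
Your outline follows the right maximum-principle template, but two concrete gaps prevent it from closing.

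\textbf{The test function must contain $\ul u - u$, not just $u$.} With $e^{\varphi(u)}$ the operator $L=F^{ii}\nabla_{ii}$ produces $\varphi'\sum f_i\lambda_i$ plus lower order, and neither $\sum f_i\lambda_i \ge \theta\sum f_i$ nor $\sum f_i\lambda_i^2 \ge \theta\lambda_1\sum f_i - C$ holds in general; the failure of such bounds for $f=(\sigma_k/\sigma_l)^{1/(k-l)}$ is exactly why Li's and Urbas's extra hypotheses were needed. The paper instead takes $\eta=\phi(|\nabla u|^2)+a(\ul u-u)$ in the factor $e^{\eta}$. Then $F^{ii}\nabla_{ii}\eta$ contains $aF^{ii}(\ul U_{ii}-U_{ii})$ directly, and the key new ingredient (Theorem~\ref{3I-th3'}, proved from the tangent-cone hypothesis \eqref{3I-200}) gives $F^{ii}(\ul U_{ii}-U_{ii})\ge \theta+\varepsilon\sum F^{ii}$ once $|\lambda|$ is large. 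Your concavity line $F^{ij}(W_{ij}-\ul W_{ij})\ge \psi-F(\ul W)$ has the wrong sign for this purpose and does not yield either of the inequalities you list; moreover, trying to recover the correct term by writing $-F^{ii}U_{ii}=F^{ii}(\ul U_{ii}-U_{ii})-F^{ii}\ul U_{ii}$ fails because the last piece is of size $C\sum F^{ii}$ with $C$ possibly larger than the $\varepsilon$ Theorem~\ref{3I-th3'} supplies.

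\textbf{Bare concavity of $F$ is not enough for the third-order terms.} Discarding $F^{ij,kl}\nabla_1U_{ij}\nabla_1U_{kl}\le 0$ leaves the cross term $U_{11}^{-1}F^{ii}(\nabla_iU_{11})^2=U_{11}F^{ii}(\nabla_i\eta)^2$ from the second-derivative test uncontrolled; since $\nabla_i\eta$ contains $a\nabla_i(\ul u-u)$ with $a$ large, this term is of order $a^2U_{11}\sum F^{ii}$ and swamps the good $a\theta\sum F^{ii}$. The paper uses the Andrews--Gerhardt refinement
\[
-F^{ij,kl}\nabla_1U_{ij}\nabla_1U_{kl}\ \ge\ \sum_{i\ne j}\frac{F^{ii}-F^{jj}}{U_{jj}-U_{ii}}(\nabla_1U_{ij})^2
\]
together with the index split $J=\{i:U_{ii}\le -sU_{11}\}$, $K=\{i>1:U_{ii}>-sU_{11}\}$: for $i\in K$ the refined inequality absorbs $F^{ii}(\nabla_iU_{11})^2$ up to $F^{11}$-errors, while for $i\in J$ the gradient term $\phi'F^{ii}U_{ii}^2\ge \phi's^2U_{11}^2\sum_{i\in J}F^{ii}$ absorbs the $a^2$-piece. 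Without this splitting the argument does not close.
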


As we shall see in Section~\ref{3I-R}, condition~\eqref{3I-200} is implied by
the assumptions in \cite{LiYY90}.
By approximation we obtain the following regularity result from
Theorem~\ref{3I-th4}.

\begin{theorem}
\label{3I-th5}
Let $(M^n, g)$ be a closed Riemannian manifold and
$\psi \in C^{1,1} (M \times \bfR)$.
Under conditions (\ref{3I-20})-(\ref{3I-30}), \eqref{3I-40}
and \eqref{3I-200}, any admissible weak solution (in the
viscosity sense) $u \in C^{0,1} (M)$ of \eqref{3I-10}
belongs to $C^{1,1} (M)$ and \eqref{hess-a10c} holds.
\end{theorem}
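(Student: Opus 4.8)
\smallskip
\noindent\textbf{Proof strategy.}
I would deduce Theorem~\ref{3I-th5} from Theorem~\ref{3I-th4} by a regularization argument. The decisive point is that the constants $C_2, C_3$ in \eqref{hess-a10c} do not depend on $1/\delta_{\ul\psi,f}$ nor on how close the equation is to degenerating, so the second order bound persists when the merely $C^{1,1}$ datum $\psi$ is replaced by smooth approximations; once a uniform $C^{1,1}$ bound holds for smooth solutions of the approximate equations converging to $u$, the conclusion follows from the stability of viscosity solutions under uniform limits.

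First, mollify the data. Fixing $a\le u\le b$ on $M$, choose $\psi^\varepsilon\in C^\infty(M\times\bfR)$ with $\psi^\varepsilon\to\psi$ uniformly on $\bM\times[a-1,b+1]$, with $\|\psi^\varepsilon\|_{C^{1,1}(\bM\times[a-1,b+1])}$ bounded independently of $\varepsilon$, and with $\inf_{\bM}\ul{\psi^\varepsilon}-\sup_{\partial\Gamma}f\ge\frac{1}{2}\delta_{\ul\psi,f}$ for all small $\varepsilon$, which is possible by \eqref{3I-40}. Because the tangent cone $\mathcal{C}_\sigma$ depends continuously (and monotonically) on $\sigma$, the subsolution condition \eqref{3I-200} for $\ul u$ is stable: $\lambda[\nabla^2\ul u+\chi](x)\in\mathcal{C}^+_{\hat{\psi^\varepsilon}(x)}$ for all $x\in\bM$ once $\varepsilon$ is small, if necessary after replacing $\ul u$ by $\ul u-\varepsilon'\varphi$ with $\varphi$ a fixed function satisfying $\nabla^2\varphi+\chi>0$. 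Thus the hypotheses of Theorem~\ref{3I-th4} hold for the equation $f(\lambda[\nabla^2 v+\chi])=\psi^\varepsilon$ with constants uniform in $\varepsilon$.

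Next, for each small $\varepsilon$ I would produce a smooth admissible solution $u^\varepsilon$ of $f(\lambda[\nabla^2 u^\varepsilon+\chi])=\psi^\varepsilon$ on $M$ with $u^\varepsilon\to u$ uniformly. Existence follows by the continuity method from the a priori estimates: the range of $u^\varepsilon$ is confined to $[a-1,b+1]$ by comparison with $\ul u$ and with translates of $u$; standard first order estimates (together with the Lipschitz control inherited from $u$) give a uniform $C^1$ bound; Theorem~\ref{3I-th4} then gives the uniform bound $|\nabla^2 u^\varepsilon|\le C_2 e^{C_3(u^\varepsilon-\inf_M u^\varepsilon)}$; and the Evans--Krylov theorem followed by Schauder estimates upgrades $u^\varepsilon$ to $C^\infty$, closing the openness--closedness argument. (If $\psi$ depends on $z$ non-monotonically, one anchors the deformation at $u$, or replaces the right-hand side by a $z$-independent one using $a\le u\le b$; this only affects bookkeeping.) Passing to a subsequence, $u^\varepsilon\to u^*$ in $C^{1,\alpha}(M)$ with $u^*\in C^{1,1}(M)$ and $|\nabla^2 u^*|\le C_2 e^{C_3(u^*-\inf_M u^*)}$ a.e.; by stability $u^*$ is an admissible viscosity solution of \eqref{3I-10}, and by a comparison principle for viscosity solutions of \eqref{3I-10} (using the strict subsolution $\ul u$, uniqueness being up to an additive constant when $\psi$ is independent of $z$) one identifies $u^*$ with $u$ after normalization. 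Hence $u\in C^{1,1}(M)$ and \eqref{hess-a10c} holds.

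The main obstacle is the two-fold issue in the last step: the \emph{solvability} of the regularized equations on the closed manifold, which requires assembling the $C^0$, $C^1$ and $C^2$ estimates into a continuity-method existence proof, and the \emph{identification} of the limit $u^*$ with the prescribed weak solution $u$, which rests on a comparison principle for viscosity solutions of \eqref{3I-10}. The stability of \eqref{3I-200} under the approximation (continuity of $\sigma\mapsto\mathcal{C}_\sigma$) is a secondary technical point, while the passage from the smooth second order bounds to the bound for $u$ is immediate precisely because the constants in \eqref{hess-a10c} are $\varepsilon$-independent, which is exactly the content of Theorem~\ref{3I-th4}.
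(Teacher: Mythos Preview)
Your approach matches the paper's: the paper's entire ``proof'' of Theorem~\ref{3I-th5} is the single clause ``By approximation we obtain the following regularity result from Theorem~\ref{3I-th4},'' placed immediately before the statement, with no further detail. So at the level of strategy you are aligned, and your emphasis on the $\delta_{\ul\psi,f}$--independence of $C_2,C_3$ in \eqref{hess-a10c} is exactly the point that makes the approximation viable.

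That said, two of the steps you flag as routine are not, and the paper does not supply them either. First, your ``standard first order estimates'' for the approximate solutions $u^\varepsilon$ are not available under the hypotheses of Theorem~\ref{3I-th5} alone: the gradient bounds in this paper (Proposition~\ref{3I-propositio-R20}) require one of the extra assumptions {\bf (i)}, {\bf (ii$'$)}, {\bf (iii$'$)}, and the author explicitly lists deriving gradient estimates under \eqref{3I-200} alone as an open question. The phrase ``Lipschitz control inherited from $u$'' needs to be made precise, since $u^\varepsilon$ is a \emph{new} solution and does not a priori inherit anything from $u$ without a comparison argument. Second, solvability of the regularized equation on a \emph{closed} manifold is genuinely delicate: as the paper itself notes, on a closed manifold an admissible subsolution for $\psi=\psi(x)$ must already be a solution, so the continuity method does not run freely and one typically has to introduce an additive constant or otherwise modify the equation. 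You acknowledge both issues as ``the main obstacle,'' which is accurate; just be aware that the paper leaves them equally unresolved, so your write-up is not less complete than the source --- but it is not more complete either.
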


By the Evans-Krylov theorem, $u \in C^{2,\alpha} (M)$, $0 < \alpha < 1$;
higher regularities follow from the classical Schauder elliptic theory.
In particular, $u \in C^{\infty} (M)$ if $\psi \in C^{\infty} (M)$.

\begin{remark}
\label{3I-remark20}
Condition~\eqref{3I-200} is always satisfied if there is a strictly
convex function on $M$ ($\partial M \neq \emptyset$), or if
$\chi \in \mathcal{C}_{\sigma}^+$ (for instance, if
$\chi = a g$, $a > 0$ and the vertex of $\mathcal{C}_{\sigma}$ is
the origin) for all $\sigma$.
For $f = \sigma_k^{1/k}$ ($k \geq 2$), $\Gamma_n^+ \subset \mathcal{C}_{\sigma}^+$
for any $\sigma > 0$. See also Lemma~\ref{3I-lemma-R10}.
\end{remark}

\begin{corollary}
\label{3I-cor1}
Let $(M, g)$ be a closed Riemannian manifold and $\psi \in C^{1,1} (M)$.
In addition to (\ref{3I-20})-(\ref{3I-40}), suppose
$\chi \in \mathcal{C}_{\sigma}^+$ for all
$\sup_{\partial \Gamma} f < \sigma \leq \sup_M \psi$.
Then any admissible weak solution $u \in C^{0,1} (M)$ of \eqref{3I-10}
belongs to $C^{2,\alpha} (M)$, $0 < \alpha < 1$, and \eqref{hess-a10c} holds.
\end{corollary}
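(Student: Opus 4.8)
The plan is to deduce Corollary~\ref{3I-cor1} from Theorem~\ref{3I-th5} by verifying its hypotheses, with the only real content being the passage from the global estimate to regularity and the check that the hypothesis $\chi \in \mathcal{C}_{\sigma}^+$ supplies the required subsolution. First I would observe that since $\psi$ is independent of $z$ and $\psi \in C^{1,1}(M)$, we automatically have $\psi \in C^{1,1}(M \times \bfR)$, and taking $a = \inf_M u$, $b = \sup_M u$ (finite since $u \in C^{0,1}(M)$ and $M$ is compact), the functions $\ul\psi$ and $\hat\psi$ of Theorem~\ref{3I-th4} both reduce to $\psi$ itself; thus \eqref{3I-40'} coincides with \eqref{3I-40}, which is assumed, and condition \eqref{3I-200} becomes the requirement that $\lambda[\nabla^2 \ul u + \chi](x) \in \mathcal{C}_{\psi(x)}^+$ for some $\ul u \in C^2(\bM)$.

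Next I would produce such a $\ul u$: take $\ul u \equiv 0$ (or any constant), so that $\nabla^2 \ul u + \chi = \chi$. Since $\psi(x) \le \sup_M \psi$ and the hypothesis gives $\chi \in \mathcal{C}_{\sigma}^+$ for every $\sigma$ in the range $\sup_{\partial\Gamma} f < \sigma \le \sup_M \psi$, I need to know that $\mathcal{C}_{\sigma}^+$ grows (or at least does not shrink) as $\sigma$ decreases, i.e. $\mathcal{C}_{\sigma'}^+ \subseteq \mathcal{C}_{\sigma}^+$ when $\sigma' \ge \sigma$; this monotonicity follows from $\Gamma^{\sigma'} \subseteq \Gamma^\sigma$ and the definition of the tangent cone at infinity together with the concavity in \eqref{3I-30}. (This is essentially the content invoked in Remark~\ref{3I-remark20} and Lemma~\ref{3I-lemma-R10}.) Consequently $\chi(x) = \nabla^2\ul u + \chi(x) \in \mathcal{C}_{\sup_M\psi}^+ \subseteq \mathcal{C}_{\psi(x)}^+$ for all $x$, so \eqref{3I-200} holds. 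We also need $\Gamma^{\psi(x)} \ne \emptyset$, which is implicit once $\mathcal{C}_{\psi(x)}^+$ is defined and nonempty.

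With \eqref{3I-20}--\eqref{3I-40}, \eqref{3I-200} all verified, Theorem~\ref{3I-th5} applies directly and yields $u \in C^{1,1}(M)$ together with the bound \eqref{hess-a10c}. Finally, as remarked in the paragraph following Theorem~\ref{3I-th5}, once $u \in C^{1,1}(M)$ equation \eqref{3I-10} is uniformly elliptic on $M$ by \eqref{3I-40} (the $C^2$ bound forces $\lambda[\nabla^2 u + \chi]$ into a compact subset of $\Gamma^{\delta}$ bounded away from $\partial\Gamma$), so the Evans--Krylov theorem gives $u \in C^{2,\alpha}(M)$ for all $0 < \alpha < 1$, completing the proof. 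I expect the only delicate point to be the monotonicity $\mathcal{C}_{\sigma'}^+ \subseteq \mathcal{C}_{\sigma}^+$ for $\sigma' \ge \sigma$; everything else is bookkeeping. If that monotonicity is not already isolated as a lemma, I would prove it from the inclusion of the superlevel sets and the fact that the recession cone of a decreasing family of convex sets is decreasing.
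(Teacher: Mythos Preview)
Your proposal is correct and follows the paper's intended (implicit) derivation: the corollary is stated without a separate proof, as an immediate consequence of Theorem~\ref{3I-th5} together with Remark~\ref{3I-remark20} (take $\ul u$ constant so that $\nabla^2 \ul u + \chi = \chi$), followed by Evans--Krylov. One simplification: the monotonicity $\mathcal{C}_{\sigma'}^+ \subseteq \mathcal{C}_{\sigma}^+$ that you flag as the delicate point is in fact unnecessary, since by \eqref{3I-40} each value $\psi(x)$ already lies in the interval $(\sup_{\partial\Gamma} f,\, \sup_M \psi]$, so the hypothesis gives $\chi \in \mathcal{C}_{\psi(x)}^+$ directly for every $x$.
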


We now turn to the second order boundary estimates. We wish to derive
such estimates without imposing any geometric conditions
on $\partial M$ except being smooth and compact.
For simplicity we only consider the case $\psi = \psi (x)$.

\begin{theorem}
\label{3I-th4b}
Let $\psi \in C^1 (\bar{M})$, $\varphi  \in C^4 (\partial M)$
and $u \in C^3 (M) \cap C^1 (\bar{M})$ be an admissible solution of \eqref{3I-10}
with $u = \varphi$ on $\partial M$.
Assume $f$ satisfies (\ref{3I-20})-(\ref{3I-40}) and
\begin{equation}
\label{3I-55}
\sum f_i \, \lambda_i \geq 0 \;\; \mbox{in $\Gamma$}.
\end{equation}
Suppose that there exists an admissible subsolution
$\ul{u} \in C^0 (\bar{M})$  in the viscosity sense:
\begin{equation}
\label{3I-11s}
\left\{ \begin{aligned}
f (\lambda [\ul{u}_{ij} + a_{ij}]) \,& \geq \psi \;\; \mbox{in $\bM$}, \\
   \ul{u} = \varphi \;\; \,& \mbox{on $\partial M$} \end{aligned} \right.
\end{equation}
 and that $\ul{u}$ is $C^2$ and satisfies
\begin{equation}
\label{3I-11s''}
  \lambda [\nabla^2 \ul{u} + \chi] (x) \in \mathcal{C}^+_{\psi (x)}
\end{equation}
in a neighborhood of $\partial M$.
Then there exists $C_4 > 0$ depending on $|u|_{C^1 (\bM)}$
and $1/\delta_{\ul{\psi}, f}$ such that
\begin{equation}
\label{hess-a10b}
\max_{\partial M}|\nabla^2 u| \leq C_4.
\end{equation}
\end{theorem}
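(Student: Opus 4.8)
The plan is to establish the boundary estimate \eqref{hess-a10b} by the classical method of constructing barrier functions adapted to a boundary point, following the approach pioneered in \cite{CNS3} but with the crucial input that we only have a subsolution $\ul u$ (not a uniformly convex domain or explicit geometric conditions on $\partial M$). Fix $x_0 \in \partial M$ and choose boundary normal coordinates around $x_0$, writing $\rho$ for the distance to $\partial M$ and using a tangential frame $e_1, \dots, e_{n-1}$ together with the inward normal $e_n$. We decompose the estimate of $\nabla^2 u$ at $x_0$ into three cases: the pure tangential derivatives $u_{\alpha\beta}$ ($\alpha, \beta < n$), the mixed derivatives $u_{\alpha n}$, and the double normal derivative $u_{nn}$. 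The pure tangential case is immediate by differentiating the boundary condition $u = \varphi$ twice along $\partial M$: $u_{\alpha\beta}(x_0) = \varphi_{\alpha\beta}(x_0) - u_n(x_0)\,\mathrm{II}_{\alpha\beta}(x_0)$, bounded by $|u|_{C^1}$, $|\varphi|_{C^2}$ and the geometry of $\partial M$.

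For the mixed derivatives I would use the standard tangential operator $T = \partial_\alpha + \sum_{\gamma<n} (\cdots) \rho \,\partial_\gamma$ that is tangent to $\partial M$, apply it to the equation, and set up a barrier of the form $v = (\ul u - u) + t\,\rho - N\rho^2$ with the linearized operator $\mathcal{L} = \sum F^{ij}\nabla_{ij} + (\text{lower order})$. The key point is that concavity \eqref{3I-30} applied to $F$ gives $\mathcal L(\ul u - u) \geq \sum F^{ij}(\ul u_{ij} - u_{ij}) \geq f(\lambda[\nabla^2\ul u + \chi]) - \psi \geq 0$ wherever $\ul u$ is admissible; combined with condition \eqref{3I-11s''} near $\partial M$ this lets $\ul u - u$ serve as a supersolution-type term, so that for suitable $t$ large and $N$ larger we get $\mathcal L v \leq -\epsilon(1 + \sum F^{ii})$ in a half-ball $M_\delta = M \cap B_\delta(x_0)$ and $v \geq 0$ on $\partial M_\delta$, giving $v \geq 0$ in $M_\delta$ and hence $\pm T(u - \varphi) \leq C v$ on the relevant part of the boundary, which bounds $u_{\alpha n}(x_0)$. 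Here the hypothesis $\sum f_i \lambda_i \geq 0$ from \eqref{3I-55} is what lets us control the zeroth-order and curvature commutator terms produced when differentiating $f(\lambda[\nabla^2 u + \chi])$ and moving $T$ past the Hessian; without it those terms need not have a favorable sign.

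The double normal derivative $u_{nn}(x_0)$ is the main obstacle, as usual. The strategy is the eigenvalue/Jacobi-type argument: from the equation, $u_{nn}(x_0)$ stays bounded \emph{unless} $\lambda[\nabla^2 u + \chi](x_0)$ approaches $\partial \Gamma$, so one argues by contradiction. Assume a sequence of solutions with $u_{nn} \to +\infty$; one shows (using $f_i > 0$, concavity, and that the already-bounded mixed and tangential entries pin down an $(n-1)\times(n-1)$ block with eigenvalues staying in a compact subset of $\Gamma_{n-1}$-type cone, via condition \eqref{3I-11s''} on $\ul u$ which forces $\lambda[\nabla^2\ul u + \chi] \in \mathcal C^+_{\psi}$) that $f$ evaluated on the limiting tangential block would force $f(\lambda[\nabla^2 u + \chi])$ below $\sup_{\partial\Gamma} f$, contradicting $f(\lambda) = \psi \geq \inf \ul\psi > \sup_{\partial\Gamma} f$ by \eqref{3I-40}. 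The quantitative form uses that $\mathcal C^+_\sigma$ is exactly the set where the tangent-cone-at-infinity obstruction vanishes, so \eqref{3I-11s''} is precisely the condition that makes this contradiction go through; one extracts an explicit lower bound on the smallest eigenvalue of the tangential block, then uses it together with a further barrier argument (again exploiting $\mathcal L(\ul u - u) \geq 0$) to bound $u_{nn}$. I expect the delicate bookkeeping to be: (i) verifying the barrier inequality $\mathcal L v \leq -\epsilon(1 + \sum F^{ii})$ uniformly, which requires carefully tracking how $\sum F^{ii}$ can blow up and using $\sum f_i \lambda_i \geq 0$ to absorb curvature terms of the form $\sum F^{ij} R_{ikjl} x^k x^l$; and (ii) the compactness argument for the double normal derivative, where one must ensure the bound is independent of how close $\lambda[\nabla^2 u]$ gets to $\partial\Gamma$ along the way, controlled by $1/\delta_{\ul\psi, f}$ as stated.
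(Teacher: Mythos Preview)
Your overall architecture (tangential, mixed, double normal) matches the paper, and you correctly identify that $\ul u - u$ replaces the distance function and that condition~\eqref{3I-11s''} is what drives the strict barrier inequality. But two substantive ingredients are missing or misidentified.

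\textbf{The barrier is incomplete.} For the mixed derivatives the paper does not use just $v = (u - \ul u) + t d - \tfrac{N}{2} d^2$; it uses
\[
  \varPsi = A_1 v + A_2 \rho^2 - A_3 \sum_{\beta < n} |\nabla_\beta (u-\varphi)|^2.
\]
The reason is that differentiating the equation tangentially produces, via the Christoffel terms, a right-hand side of size $C\big(1 + \sum f_i |\lambda_i| + \sum f_i\big)$, and the bad piece $\sum f_i |\lambda_i|$ is \emph{not} controlled by $\varepsilon(1 + \sum F^{ii})$. The extra quadratic-gradient term contributes $F^{ij} U_{i\beta} U_{j\beta}$, which by Proposition~\ref{prop5.2} dominates $c_0 \sum_{i\neq r} f_i \lambda_i^2$; then Corollary~\ref{cor1.10} (whose proof is exactly where $\sum f_i \lambda_i \geq 0$ is used, via Lemma~\ref{lem1.10}) absorbs $\sum f_i |\lambda_i|$ into this. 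So your attribution of \eqref{3I-55} to ``curvature commutator terms'' is off; the curvature terms are harmless and of size $C\sum F^{ii}$. Without the $-A_3\sum |\nabla_\beta(u-\varphi)|^2$ term and the Proposition~\ref{prop5.2}/Corollary~\ref{cor1.10} mechanism, your barrier argument for $u_{\alpha n}$ does not close. Also, the way \eqref{3I-11s''} enters the proof of $F^{ij}\nabla_{ij} v \leq -\varepsilon(1+\sum F^{ii})$ is via Theorem~\ref{3I-th3'} in the regime $|\lambda|$ large (concavity alone only gives $\geq 0$, as you note); in the regime $|\lambda|$ bounded one uses that $\{F^{ij}\}$ has two-sided eigenvalue bounds so the $-N\,F^{ij}\nabla_i d\nabla_j d$ term does the work. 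Your sketch blurs these two regimes and gets the sign of $v$ and the size of $t$ backwards ($t$ is small, not large).

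\textbf{The double normal argument is not a compactness/blow-up.} The paper follows Trudinger's scheme: one shows directly that for the $(n-1)\times(n-1)$ tangential block $\{U_{\alpha\beta}\}$ one has $f(\lambda'[\{U_{\alpha\beta}(x_0)\}], R) \geq \psi(x_0) + c_0$ for all large $R$, by studying the minimum $m_R$ of this quantity over $\partial M$ and, at the minimizing point, running a \emph{second} barrier argument with an auxiliary function $\varPhi$ built from $-\nabla_n(u-\varphi)$ and the linearization of $\tilde F$ at that point. This again needs the full $\varPsi$ with the quadratic-gradient term. Your proposed contradiction (``$f$ on the limiting tangential block would drop below $\sup_{\partial\Gamma} f$'') has the inequality pointing the wrong way: the danger as $u_{nn}\to\infty$ is that $f$ stays \emph{equal} to $\psi$ along a direction escaping to infinity, and it is precisely the $\mathcal C^+_\sigma$ hypothesis on the \emph{subsolution} block $\{\ul U_{\alpha\beta}\}$, fed through $c_R = \inf(\tilde F[\ul U_{\alpha\beta}] - F[\ul U_{ij}]) > 0$, that rules this out.
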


\begin{remark}
An admissible subsolution $\ul{u} \in C^2 (\bar{M})$
will automatically satisfy \eqref{3I-200}
provided that
\begin{equation}
\label{3I-30sc}
\partial \Gamma^{\sigma} \cap \mathcal{C}_{\sigma} = \emptyset,
\;\; \forall \; \sigma \in \big[\inf_M \psi, \sup_M \psi \big].
\end{equation}
Condition~\eqref{3I-30sc} excludes the linear function $f = \sigma_1$ which
corresponds to the Poisson equation, but is clearly satisfied by a wide class
of concave functions including $f = \sigma_k^{1/k}$,
$k \geq 2$ and $f = (\sigma_k/\sigma_l)^{1/(k-l)}$
for all $1 \leq l < k \leq n$.
Note that condition~\eqref{3I-30sc} holds if $\partial \Gamma^{\sigma}$ is
strictly convex.
\end{remark}

Applying Theorems~\ref{3I-th4} and \ref{3I-th4b} we can prove the following
existence result by the standard continuity method.

\begin{theorem}
\label{3I-thm2}
Let $\psi \in C^{\infty} (\bM)$, $\varphi \in C^{\infty} (\partial M)$.
Suppose $f$ satisfies (\ref{3I-20})-(\ref{3I-40}), \eqref{3I-55} and that
there exists an admissible subsolution
$\ul{u} \in C^2 (\bM)$ satisfying \eqref{3I-11s}
and \eqref{3I-11s''} for all $x \in \bM$.
Then there exists an admissible solution $u \in C^{\infty} (\bM)$ of the Dirichlet
problem for equation~\eqref{3I-10} with boundary condition
$u = \varphi$ on $\partial M$, provided that
{\bf (i)} $\Gamma = \Gamma_n^+$, or
{\bf (ii)} the sectional curvature of $(M, g)$ is nonnegative, or
{\bf (iii)} $f$ satisfies
\begin{equation}
\label{3I-R40}
  f_j \geq \delta_0 \sum f_i (\lambda) \; \mbox{if $\lambda_j < 0$,
on $\partial \Gamma^{\sigma} \; \forall \; \sigma > \sup_{\partial \Gamma} f$}.
\end{equation}
\end{theorem}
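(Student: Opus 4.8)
The plan is to use the method of continuity, feeding it the a~priori estimates supplied by Theorems~\ref{3I-th4} and \ref{3I-th4b} together with standard zero- and first-order bounds. First I would fix a smooth function $\psi_0$ with $\sup_{\partial\Gamma}f<\psi_0$ for which $\ul u$ remains an admissible subsolution --- concretely $\psi_0=f(\lambda[\nabla^2\ul u+\chi])$ (after a harmless mollification of $\ul u$ to make it smooth, which preserves the subsolution inequality up to an arbitrarily small error). For $t\in[0,1]$ put $\psi_t=t\psi+(1-t)\psi_0$ and consider the family
\[
  f(\lambda[\nabla^2 u^t+\chi])=\psi_t \ \text{ in }M,\qquad u^t=\varphi\ \text{ on }\partial M .
\]
Since $\psi_0\geq\psi$ we have $\psi\leq\psi_t\leq\psi_0$ pointwise, so $\ul u$ is an admissible subsolution of the $t$-problem for every $t$; moreover $\delta_{\psi_t,f}\geq\delta_{\psi,f}>0$ uniformly in $t$, and by elementary properties of the cones $\mathcal{C}^+_\sigma$ conditions \eqref{3I-11s} and \eqref{3I-11s''} persist for $\psi_t$. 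Let $I=\{t\in[0,1]:\text{the }t\text{-problem has an admissible solution }u^t\in C^\infty(\bM)\}$; then $0\in I$, and it remains to show $I$ is open and closed in $[0,1]$.

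Openness follows from the implicit function theorem in Hölder spaces. Because $\psi_t=\psi_t(x)$ does not depend on $u$, the linearization of $u\mapsto f(\lambda[\nabla^2 u+\chi])$ at an admissible $u^t$ is $Lv=\sum_{i,j}F^{ij}\nabla_{ij}v$, where $F^{ij}=\partial F/\partial A_{ij}$ is evaluated at $\nabla^2 u^t+\chi$; by \eqref{3I-20}--\eqref{3I-30} this is a linear uniformly elliptic operator with \emph{no} zero-order term, so $Lv=0$, $v|_{\partial M}=0$ forces $v\equiv 0$ by the maximum principle, and hence $L\colon\{v\in C^{2,\alpha}(\bM):v|_{\partial M}=0\}\to C^\alpha(\bM)$ is an isomorphism (Schauder theory and the Fredholm alternative). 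Thus a neighbourhood of each $t\in I$ lies in $I$, and the implicit-function solution, being $C^{2,\alpha}$ and admissible, bootstraps to $C^\infty(\bM)$.

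Closedness is the place the a~priori estimates --- which must be uniform in $t$ --- enter. The $C^0$ bound is standard: $u^t\geq\ul u$ by the comparison principle for admissible functions, and an upper bound follows from a barrier built on the $C^0$ data. The global gradient estimate $\max_{\bM}|\nabla u^t|\leq C$ is the step that requires the trichotomy \textbf{(i)}--\textbf{(iii)}: in the differential-inequality argument for $|\nabla u|^2$ run through the operator $L$, the curvature of $(M,g)$ contributes terms that are controlled when $\Gamma=\Gamma_n^+$ (where $\nabla^2u+\chi>0$ yields a lower bound on $\nabla^2u$), or by $\mathrm{Ric}\geq 0$ when the sectional curvature is nonnegative, or by absorbing them via the structural inequality \eqref{3I-R40}; the boundary gradient bound again comes from barriers on $\ul u$. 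With $C^1$ bounds in hand, Theorem~\ref{3I-th4} applies: for $\psi_t=\psi_t(x)$ its hypotheses \eqref{3I-40'} and \eqref{3I-200} reduce to $\delta_{\psi_t,f}>0$ and to $\lambda[\nabla^2\ul u+\chi]\in\mathcal{C}^+_{\psi_t(x)}$, both verified above, giving $\max_{\bM}|\nabla^2u^t|\leq C_1(1+\max_{\partial M}|\nabla^2u^t|)$; and Theorem~\ref{3I-th4b}, using \eqref{3I-55} together with \eqref{3I-11s} and \eqref{3I-11s''} near $\partial M$, bounds $\max_{\partial M}|\nabla^2u^t|$. Hence $|u^t|_{C^2(\bM)}\leq C$ uniformly, and this bound together with \eqref{3I-40} keeps $\lambda[\nabla^2u^t+\chi]$ in a fixed compact subset of $\Gamma$ on which the equation is uniformly elliptic; the Evans--Krylov theorem then yields uniform $C^{2,\alpha}(\bM)$ bounds, and differentiating the equation and applying Schauder theory yields uniform $C^{k,\alpha}(\bM)$ bounds for all $k$ (here $\psi,\varphi,\chi,g$ smooth). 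Passing to the limit along any sequence $t_j\to t_*$ in $I$ shows $t_*\in I$, so $I=[0,1]$ and $u:=u^1$ is the required admissible solution.

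The main obstacle is the global gradient estimate: in contrast to the second-order estimates, which Theorems~\ref{3I-th4} and \ref{3I-th4b} supply under essentially only the structural hypotheses, the first-order estimate genuinely fails without an additional assumption, and \textbf{(i)}--\textbf{(iii)} are precisely the three regimes in which the curvature contributions in the Bochner-type computation can be dominated. A secondary technical point is arranging the continuity path so that the subsolution conditions \eqref{3I-11s}, \eqref{3I-11s''} and the positivity of $\delta_{\psi_t,f}$ survive uniformly along it, which is what makes all the a~priori bounds $t$-independent.
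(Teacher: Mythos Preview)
Your proposal is correct and follows essentially the same approach as the paper, which gives no detailed proof of Theorem~\ref{3I-thm2} beyond stating that it follows from ``the standard continuity method'' using Theorems~\ref{3I-th4} and \ref{3I-th4b} together with the gradient estimates of Proposition~\ref{3I-propositio-R20} (the latter being precisely where the trichotomy \textbf{(i)}--\textbf{(iii)} enters). Your outline of openness via the implicit function theorem, closedness via the chain $C^0 \to C^1 \to C^2 \to C^{2,\alpha}$, and the choice of continuity path $\psi_t = t\psi + (1-t)f(\lambda[\nabla^2\ul u + \chi])$ so that $\ul u$ remains a subsolution throughout, is exactly the standard implementation the paper has in mind.

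One small remark: your claim that \eqref{3I-11s''} persists for $\psi_t$ is correct but not quite as immediate as ``elementary properties of the cones'' suggests. The point is that for $t\in(0,1]$, at any $x$ either $\psi_t(x)<\psi_0(x)$, in which case $\lambda[\nabla^2\ul u+\chi](x)\in\Gamma^{\psi_t(x)}\subset\mathcal{C}^+_{\psi_t(x)}$ automatically, or $\psi_t(x)=\psi_0(x)$, which forces $\psi_0(x)=\psi(x)$ and hence $\psi_t(x)=\psi(x)$, so the original hypothesis applies directly.
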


When $M$ is a smooth bounded domain in $\bfR^n$, Theorem~\ref{3I-thm2} {\bf (ii)}
extends the previous results of Caffarelli, Nirenberg and Spruck~\cite{CNS3}, 
Trudinger~\cite{Trudinger95} and the author~\cite{Guan94}; see ~\cite{Guan} for more 
detailed discussions.
The assumptions {\bf (i)}-{\bf (iii)} are only needed to derive gradient
estimates; see Proposition~\ref{3I-propositio-R20}. It would be desirable to
remove these assumptions.

\begin{corollary}
\label{3I-cor-R2}
Let $f = \sigma_k^{{1}/{k}}$, $k \geq 2$ or
$f = (\sigma_k/\sigma_l)^{\frac{1}{k-l}}$, $0 \leq l <  k \leq n$.
Given $\psi \in C^{\infty} (\bM)$, $\psi > 0$ and
$\varphi \in C^{\infty} (\partial M)$,
suppose that there exists an admissible subsolution
$\ul{u} \in C^2 (\bM)$ satisfying \eqref{3I-11s}.
Then there exists an admissible solution $u \in C^{\infty} (\bM)$ of
equation~\eqref{3I-10}
with $u = \varphi$ on $\partial M$.
\end{corollary}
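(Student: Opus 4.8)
The plan is to deduce Corollary~\ref{3I-cor-R2} from Theorem~\ref{3I-thm2} by verifying that the hypotheses of the theorem hold for $f = \sigma_k^{1/k}$ ($k \geq 2$) and $f = (\sigma_k/\sigma_l)^{1/(k-l)}$, and that the existence branch can always be invoked. First I would recall that on the cone $\Gamma_k$ these functions are known to satisfy the structure conditions \eqref{3I-20}--\eqref{3I-30}: positivity and concavity of $\sigma_k^{1/k}$ and $(\sigma_k/\sigma_l)^{1/(k-l)}$ on $\Gamma_k$ are classical (Gårding, Marcus--Lopes; see also the references cited after the display for $\Gamma_k$). Since $\psi > 0$ on $\bM$ and $\sup_{\partial\Gamma_k} f = 0$ for all these $f$, condition \eqref{3I-40} becomes $\inf_{\bM}\psi > 0$, which holds by compactness. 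Condition \eqref{3I-55}, $\sum f_i\lambda_i \geq 0$ in $\Gamma$, holds because these $f$ are homogeneous of degree one, so by Euler's identity $\sum f_i\lambda_i = f(\lambda) > 0$ on $\Gamma_k$.

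Next I would address condition \eqref{3I-11s''}, namely that the given subsolution $\ul u$ (which is $C^2$) satisfies $\lambda[\nabla^2\ul u + \chi](x) \in \mathcal{C}^+_{\psi(x)}$ for all $x \in \bM$. Here I would use Remark~\ref{3I-remark20}: for $f = \sigma_k^{1/k}$ with $k \geq 2$ one has $\Gamma_n^+ \subset \mathcal{C}^+_\sigma$ for every $\sigma > 0$, and more generally the tangent cone at infinity $\mathcal{C}_\sigma$ to $\partial\Gamma^\sigma_k$ has vertex at the origin with $\mathcal{C}_\sigma \cap \Gamma_k$ avoiding $\Gamma^\sigma_k$; since an admissible subsolution has $\lambda[\nabla^2\ul u + \chi] \in \Gamma_k$ with $f(\lambda[\nabla^2\ul u+\chi]) \geq \psi > 0$, that is $\lambda[\nabla^2\ul u+\chi] \in \Gamma^\psi_k$, hence automatically in $\mathcal{C}^+_{\psi}$. (This is exactly the situation of condition \eqref{3I-30sc}: for $k \geq 2$ the level sets $\partial\Gamma^\sigma_k$ are strictly convex, so $\partial\Gamma^\sigma_k \cap \mathcal{C}_\sigma = \emptyset$, and then any admissible $\ul u$ satisfying \eqref{3I-11s} satisfies \eqref{3I-11s''} and \eqref{3I-200}.) The same reasoning applies to $(\sigma_k/\sigma_l)^{1/(k-l)}$ with $k \geq 2$; the case $l = 0, k \geq 2$ reduces to $\sigma_k^{1/k}$, and one should note the corollary's stated range $0 \leq l < k \leq n$ with $\psi > 0$ forces $k \geq 2$ in the quotient case as well (for $k=1$ we would need $l = 0$, i.e.\ $f = \sigma_1$, which is excluded).

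Finally I would invoke the existence dichotomy in Theorem~\ref{3I-thm2}: we must land in one of the three branches {\bf (i)}--{\bf (iii)}. For $f = \sigma_k^{1/k}$, if $\Gamma = \Gamma_n$ ($k = n$) we are in case {\bf (i)}; otherwise, for general $k$ and for the quotients $(\sigma_k/\sigma_l)^{1/(k-l)}$, I would check condition \eqref{3I-R40}: on $\partial\Gamma^\sigma_k$, whenever some $\lambda_j < 0$ one has the well-known inequality $f_j \geq \delta_0 \sum_i f_i$ for a uniform $\delta_0 = \delta_0(n,k) > 0$ (this is a standard consequence of the formula $\partial\sigma_k/\partial\lambda_j = \sigma_{k-1}(\lambda|j)$ together with the fact that on $\Gamma_k$ a negative $\lambda_j$ must be the smallest eigenvalue, making $\sigma_{k-1}(\lambda|j)$ comparatively large; the analogous estimate for quotients is also classical). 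Thus branch {\bf (iii)} always applies. The combination of these verifications, fed into Theorem~\ref{3I-thm2}, yields the admissible solution $u \in C^\infty(\bM)$.

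The main obstacle I anticipate is not in any single step but in assembling the correct classical facts about $\sigma_k$ and its quotients with uniform constants --- specifically, confirming that \eqref{3I-R40} holds with a structural $\delta_0$ on all the level sets $\partial\Gamma^\sigma_k$ simultaneously, and that the tangent-cone-at-infinity geometry is as described (vertex at the origin, strict convexity of level sets for $k \geq 2$). These are true and well-documented in the literature on $k$-Hessian equations, but one must cite them carefully; everything else is a direct quotation of Theorems~\ref{3I-th4}, \ref{3I-th4b}, and \ref{3I-thm2}.
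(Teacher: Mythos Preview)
Your proposal is correct and follows exactly the route the paper intends: the corollary is stated immediately after Theorem~\ref{3I-thm2} and the remark on condition~\eqref{3I-30sc} with no separate proof, precisely because these $f$ satisfy \eqref{3I-20}--\eqref{3I-40} and \eqref{3I-55} (Euler's identity), the subsolution automatically gives \eqref{3I-11s''} via \eqref{3I-30sc}, and branch {\bf (iii)} applies through \eqref{3I-R40}. Your identification of each verification step, including the edge case $k=1$, matches the paper's implicit argument.
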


In Theorem~\ref{3I-thm2} there are no geometric restrictions
to $\partial M$ being made. This gives Theorem~\ref{3I-thm2} the advantage of
flexibility in applications. In general, the Dirichlet problem
is not always solvable in arbitrary domains without the subsolution assumption,
as in the case of Monge-Amp\`ere equations.
In the classical theory of elliptic equations, a standard technique
is to use the distance function to the boundary to construct local barriers for
boundary estimates. So one usually need require the boundary to
possess certain geometric properties; see e.g. \cite{Serrin69} for
the prescribed mean curvature equation and \cite{CNS1}, \cite{CKNS} for
Monge-Amp\`ere equations; see also \cite{GT} and \cite{CNS3}.
Technically, we use $u - \ul{u}$ to replace the boundary distance function
in deriving the second order boundary estimates.
This idea was first used by Haffman, Rosenberg and Spruck~\cite{HRS}
and further developed in \cite{GS93}, \cite{GL96},
\cite{Guan98a}, \cite{Guan98b} to treat the real and complex Monge-Amp\`ere
equations in general domains as well as in \cite{Guan94}, \cite{Guan99a}
for more general fully nonlinear equations.
 Their results and techniques have found useful
applications in some important problems; see e.g. the work of
P.-F. Guan~\cite{GuanPF02}, \cite{GuanPF10} and  papers
of Chen~\cite{Chen00}, Blocki~\cite{Blocki}, and Phong and Sturm~\cite{PS07}
on the Donaldson conjectures~\cite{Donaldson99} in K\"ahler geometry.
In \cite{GS93}, \cite{GS02},
\cite{GS04} we used the techniques to study Plateau type problems for
locally convex hypersurfaces of constant curvature in $\bfR^{n+1}$.

We shall also make use of $u - \ul{u}$ in the proof of the global
estimate~\eqref{hess-a10}.
This is one of the key ideas in this paper; see the proof in 
Section~\ref{hess-g}.
Note that in Theorem~\ref{3I-th4} the function $\ul{u}$ is not necessarily 
a subsolution.
On a closed manifold, an admissible subsolution for $\psi = \psi (x)$ must 
be a solution if there is a solution at all, and any two
admissible solutions differ at most by a constant. This is a consequence of
the concavity condition~\eqref{3I-30} and the maximum principle.

Similar equations where $\chi$ depends on $u$ or $\nabla u$ (or both) also occur
naturally and have received extensive study in classical differential geometry;
see e.g. \cite{GG02}, \cite{GM03}, and in conformal geometry in which
there is a huge literature; see for instance \cite{CGY02a}, \cite{CGY02b},
\cite{ChenS05}, 
~\cite{GeW}, \cite{GW03a}, \cite{GW03b}, 
\cite{GV03a}, 
\cite{GV1}, \cite{Han04}, \cite{LL03}, \cite{LL05}, \cite{LS05}, ~\cite{STW},
\cite{V00}, \cite{V02} and references therein.
In the current paper we confine our discussion to the case $\chi = \chi (x)$,
$x \in \bM$.

In Section~\ref{3I-C} we discuss some consequences of the concavity
condition.
Our proof of the estimates heavily depends on results in Section~\ref{3I-C}.
The global and boundary estimates are derived in
Sections~\ref{hess-g} and \ref{hess-b}, respectively.
In Section~\ref{3I-R} we briefly discuss the results of Li \cite{LiYY90} and
Urbas~\cite{Urbas02}, followed by gradient estimates. We end the paper with a new
example which was first brought to our attention by Xinan Ma to whom we wish to
express our gratitude.

The author also wishes to thank Jiaping Wang for helpful discussions on the proof
of Theorem~\ref{3I-th3} and related topics.

\bigskip

\section{The concavity condition}
\label{3I-C}
\setcounter{equation}{0}

\medskip

Let $\sigma > \sup_{\partial \Gamma} f$ and assume
$\Gamma^{\sigma} := \{f > \sigma\} \neq \emptyset$. Then
$\partial \Gamma^{\sigma}$ is a smooth convex noncompact complete
hypersurface contained in $\Gamma$.
Clearly $\Gamma^{\sigma} \neq \mathcal{C}_{\sigma}^+$ unless
$\partial \Gamma^{\sigma}$ is a plane.

Let $\mu, \lambda \in \partial \Gamma^{\sigma}$. By the convexity of
$\partial \Gamma^{\sigma}$, the open segment
\[ (\mu, \lambda) \equiv \{t \mu + (1-t)\lambda: 0 < t < 1\} \]
is either completely contained in or does not intersect with
$\partial \Gamma^{\sigma}$. Therefore,
\[ f (t \mu + (1-t) \lambda) - \sigma > 0, \;\; \forall \, 0 < t < 1 \]
by condition~\eqref{3I-20}, unless
$(\mu, \lambda) \subset \partial \Gamma^{\sigma}$.

For $R > |\mu|$, let
\[ \Theta_R (\mu) \equiv
  \inf_{\lambda \in \partial B_R (0) \cap \partial \Gamma^{\sigma}}
        \max_{0 \leq t \leq 1} f (t \mu + (1-t)\lambda) - \sigma \geq 0. \]
Note that $\Theta_R (\mu) = 0$ if and only if
$(\mu, \lambda) \subset \partial \Gamma^{\sigma}$
for some $\lambda \in \partial B_R (0) \cap \partial \Gamma^{\sigma}$,
since the set $\partial B_R (0) \cap \partial \Gamma^{\sigma}$ is compact.

\begin{lemma}
\label{3I-C-lemma10}
For $\mu \in \partial \Gamma^{\sigma}$, $\Theta_R (\mu)$ is nondecreasing in
$R$. Moreover, if $\Theta_{R_0} (\mu) > 0$ for some $R_0 \geq |\mu|$ then
$\Theta_{R'} > \Theta_{R}$ for all $R' > R \geq R_0$.
\end{lemma}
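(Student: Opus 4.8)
The plan is to exploit the convexity of $\partial \Gamma^{\sigma}$ together with the monotonicity condition~\eqref{3I-20} in a geometric way. First I would observe that the monotonicity of $\Theta_R(\mu)$ in $R$ is almost immediate from the definition once we understand the geometry of the chords $[\mu,\lambda]$. Fix $R' > R \geq |\mu|$ and pick $\lambda' \in \partial B_{R'}(0) \cap \partial \Gamma^{\sigma}$ realizing (or nearly realizing) the infimum defining $\Theta_{R'}(\mu)$. The segment from $\mu$ to $\lambda'$ crosses the sphere $\partial B_R(0)$; since $\mu,\lambda' \in \partial \Gamma^{\sigma}$ and $\partial \Gamma^{\sigma}$ is convex, the point $\lambda$ where the extended chord (from $\mu$ through $\lambda'$) meets $\partial \Gamma^{\sigma}$ again — which exists and has $|\lambda| \le R'$ — lies on the segment $[\mu,\lambda']$ (or $\lambda' $ itself does the job when the whole chord stays on the boundary). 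One then needs the containment $[\mu,\lambda] \subset [\mu,\lambda']$ to compare the two maxima. The key analytic point is that along the ray from $\mu$, the function $t \mapsto f(t\mu + (1-t)\lambda')$ restricted to the sub-chord $[\mu,\lambda]$ achieves a value at least as large as its maximum does: more precisely $\max_{[\mu,\lambda]}(f-\sigma) \le \max_{[\mu,\lambda']}(f-\sigma)$ trivially since $[\mu,\lambda] \subseteq [\mu,\lambda']$, but we need the reverse bound linking $\Theta_R$ and $\Theta_{R'}$. Here one uses that the point of $\partial \Gamma^\sigma$ on $\partial B_R(0)$ closest in direction to $\lambda'$, call it $\lambda$, satisfies $\max_{[\mu,\lambda]}(f-\sigma) \ge \Theta_R(\mu)$ by definition, and simultaneously $\max_{[\mu,\lambda]}(f-\sigma) \le \max_{[\mu,\lambda']}(f-\sigma)$, giving $\Theta_R(\mu) \le \Theta_{R'}(\mu)$ after taking the infimum over $\lambda'$.

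For the strict monotonicity, suppose $\Theta_{R_0}(\mu) > 0$ and let $R' > R \ge R_0$. By the characterization recalled just before the lemma, $\Theta_R(\mu) > 0$ means no chord $[\mu,\lambda]$ with $\lambda \in \partial B_R(0)\cap \partial\Gamma^\sigma$ lies entirely in $\partial\Gamma^\sigma$; in particular $f - \sigma$ is strictly positive in the relative interior of each such chord, and by compactness of $\partial B_R(0)\cap\partial\Gamma^\sigma$ the infimum $\Theta_R(\mu)$ is attained, say at $\lambda_R$, with $\max_{0\le t\le 1} f(t\mu+(1-t)\lambda_R) - \sigma = \Theta_R(\mu)$ achieved at some interior $t_R \in (0,1)$. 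Now I would argue that pushing $\lambda$ out to radius $R'$ along the boundary strictly increases the attainable maximum: take the chord $[\mu,\lambda']$ with $\lambda' \in \partial B_{R'}(0)\cap\partial\Gamma^\sigma$ whose intersection with $\partial B_R(0)$ lies on $\partial\Gamma^\sigma$ (so that intersection point is a legitimate competitor $\lambda_R'$ for the radius-$R$ infimum). The chord $[\mu,\lambda']$ strictly contains $[\mu,\lambda_R']$, and the maximum of $f$ over the longer chord is \emph{strictly} larger than over the shorter one — this is because the maximum over $[\mu,\lambda_R']$ is attained at an interior point (the endpoints give $f = \sigma$ plus possibly the value at $\mu$, which is $\sigma$ as well since $\mu \in \partial\Gamma^\sigma$), and at that interior maximizer one can move a little further toward $\lambda'$ without leaving $\mathcal{C}_\sigma^+$... the cleanest way is: by concavity of $f$ along the chord and $f(\mu) = f(\lambda') = \sigma$ with the chord not contained in $\partial\Gamma^\sigma$, the restriction of $f$ to $[\mu,\lambda']$ is a concave function equal to $\sigma$ at both ends and $> \sigma$ somewhere, hence its maximum is attained at a unique interior point and the function is strictly increasing then strictly decreasing; restricting to the proper sub-chord $[\mu,\lambda_R']$ therefore strictly lowers the maximum unless $\lambda_R'$ already sits past the maximizer, but in the latter case one compares against a different competitor. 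Taking infima over the appropriate compact sets then yields $\Theta_{R'}(\mu) > \Theta_R(\mu)$.

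The main obstacle I anticipate is the bookkeeping in the strict-inequality step: one must be careful that the competitor $\lambda'$ at radius $R'$ can always be chosen so that the segment $[\mu,\lambda']$ meets $\partial B_R(0)$ \emph{on} $\partial\Gamma^\sigma$ — this requires knowing that each boundary point at radius $R$ extends to a boundary point at radius $R'$ along a chord through $\mu$, which follows from convexity and noncompactness of $\partial\Gamma^\sigma$ (the boundary, being a complete convex hypersurface in the cone, meets every ray from an interior region in exactly the needed configuration), but the degenerate sub-cases where a chord becomes tangent or where the maximizer coincides with an endpoint need separate, if easy, treatment. The concavity condition~\eqref{3I-30} is what rules out pathological oscillation of $f$ along chords and makes "strictly larger maximum on the larger chord" rigorous; I would lean on it heavily rather than on~\eqref{3I-20} alone. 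Once the attainment of the infimum (from compactness of $\partial B_R(0) \cap \partial\Gamma^\sigma$) and the monotone-then-decreasing structure of $f$ along each chord are in hand, both assertions of the lemma drop out by taking infima.
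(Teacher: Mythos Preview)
Your proposal has a genuine gap in how you produce the competitor at the smaller radius. Given the minimizer $\lambda' \in \partial B_{R'}(0) \cap \partial \Gamma^{\sigma}$, you try to obtain $\lambda \in \partial B_R(0) \cap \partial \Gamma^{\sigma}$ in two ways, and neither works. First, the point where the segment $[\mu,\lambda']$ crosses $\partial B_R(0)$ lies on the chord and hence in $\overline{\Gamma^{\sigma}}$, but generically in the \emph{open} set $\Gamma^{\sigma}$, not on $\partial\Gamma^{\sigma}$; indeed a line meets the boundary of the convex set $\overline{\Gamma^{\sigma}}$ in at most two points (here $\mu$ and $\lambda'$) unless the whole chord sits in $\partial\Gamma^{\sigma}$, which the hypothesis $\Theta_{R_0}>0$ rules out. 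So there is no third intersection ``again,'' and the crossing point is not an admissible competitor for $\Theta_R(\mu)$. Second, ``the point of $\partial\Gamma^{\sigma}$ on $\partial B_R(0)$ closest in direction to $\lambda'$'' is not defined precisely, and there is no reason the inequality $\max_{[\mu,\lambda]}(f-\sigma)\le\max_{[\mu,\lambda']}(f-\sigma)$ should hold for such a point. Your single-chord concavity picture (increasing then decreasing) is correct on each chord, but it only compares $[\mu,\lambda]$ with $[\mu,\lambda']$ when $\lambda$ lies on $[\mu,\lambda']$ --- precisely what you cannot arrange. You identify this as ``the main obstacle'' but the appeal to ``convexity and noncompactness'' does not resolve it.

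The paper avoids this by working in the two-dimensional plane $P$ through $\mu$, $\lambda_{R'}$, and the origin, so that distances to the origin are computed inside $P$. The curve $P\cap\partial\Gamma^{\sigma}$ is convex in $P$ and joins $\mu$ (radius $|\mu|$) to $\lambda_{R'}$ (radius $R'$); by the intermediate value theorem there is a point $\lambda_R$ on this arc with $|\lambda_R|=R$. This $\lambda_R$ is a legitimate competitor for $\Theta_R(\mu)$, but it does \emph{not} lie on the chord $[\mu,\lambda_{R'}]$. The strict inequality is then obtained by a different mechanism than sub-chord containment: since $\Theta_{R_0}>0$ forces $\mu,\lambda_R,\lambda_{R'}$ to be non-collinear, the open segment $(\mu,\lambda_R)$ lies strictly on the arc side of the line through $\mu,\lambda_{R'}$, whereas the convex superlevel set $\{f\ge\sigma+\Theta_{R'}\}$ is supported by that line on the opposite side. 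Hence $f<\sigma+\Theta_{R'}$ everywhere on $(\mu,\lambda_R)$, so $\Theta_R(\mu)\le\max_{[\mu,\lambda_R]}f-\sigma<\Theta_{R'}$. The comparison thus rests on the nesting and convexity of the superlevel sets coming from \eqref{3I-20}--\eqref{3I-30}, not on the behavior of $f$ along a single chord.
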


\begin{proof}
Write
$\Theta_R = \Theta_R (\mu)$ when there is no possible confusion.
Suppose $\Theta_{R_0} (\mu) > 0$ for some $R_0 \geq |\mu|$.
Let $R' > R \geq R_0$ and assume 
$\lambda_{R'} \in \partial B_{R'} (0) \cap \partial \Gamma^{\sigma}$
such that
\[     \Theta_{R'} =
        \max_{0 \leq t \leq 1} f (t \mu + (1-t) \lambda_{R'}) - \sigma. \]
Let $P$ be the (two dimensional) plane through $\mu, \lambda_{R'}$ and the
origin of $\bfR^n$. There is a point $\lambda_R \in \partial B_R (0)$ which
lies between $\mu$ and $\lambda_{R'}$ on the curve $P \cap \partial \Gamma^{\sigma}$.
Note that $\mu$, $\lambda_R$ and $\lambda_R'$ are not on a straight line,
for $(\mu, \lambda_{R})$ can not be part of $(\mu, \lambda_{R'})$
since $\Theta_{R_0} > 0$ and $\partial \Gamma^{\sigma}$ is convex.
We see that
\[ \max_{0 \leq t \leq 1} f (t \mu + (1-t) \lambda_{R}) - \sigma
   < \Theta_{R'} \]
by condition~\eqref{3I-20}. This proves $\Theta_{R} < \Theta_{R'}$.
\end{proof}

\begin{corollary}
\label{3I-cor20}
Let $\mu \in \partial \Gamma^{\sigma}$.
The following are equivalent:

{\bf (a)} $\mu \in \mathcal{C}_{\sigma}$;

{\bf (b)} $\Theta_R (\mu) = 0$ for all $R > |\mu|$;

{\bf (c)} $\partial \Gamma^{\sigma} \cap \mathcal{C}_{\sigma}$
          contains a ray through $\mu$;

{\bf (d)} $T_{\mu} \partial \Gamma^{\sigma} \cap \mathcal{C}_{\sigma}$
          contains a ray through $\mu$, where $T_{\mu} \partial \Gamma^{\sigma}$
          is the tangent (supporting) plane of $\partial \Gamma^{\sigma}$ at $\mu$.
\end{corollary}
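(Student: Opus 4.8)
The plan is to prove the four conditions equivalent in a cycle, (a)~$\Rightarrow$~(b)~$\Rightarrow$~(c)~$\Rightarrow$~(d)~$\Rightarrow$~(a). The implication (d)~$\Rightarrow$~(a) is immediate, since a ray lying in $T_\mu\partial\Gamma^\sigma \cap \mathcal{C}_\sigma$ and passing through $\mu$ in particular forces $\mu \in \mathcal{C}_\sigma$. The engine for the three substantial implications is a reformulation of (b) that I would establish first: for $\mu \in \partial\Gamma^\sigma$, one has $\Theta_R(\mu) = 0$ for all $R > |\mu|$ if and only if $\partial\Gamma^\sigma$ contains a ray emanating from $\mu$.

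To prove the reformulation: if $\{\mu + tv : t \geq 0\} \subseteq \partial\Gamma^\sigma$ for a unit vector $v$, then for each $R > |\mu|$ the point $\lambda_R$ at which this ray meets $\partial B_R(0)$ satisfies $[\mu, \lambda_R] \subseteq \partial\Gamma^\sigma$, so $\Theta_R(\mu) = 0$ by the observation recorded just before Lemma~\ref{3I-C-lemma10}. Conversely, if $\Theta_R(\mu) = 0$ for all $R$, that same observation (together with compactness of $\partial B_R(0) \cap \partial\Gamma^\sigma$) produces, for every $R$, a point $\lambda_R \in \partial B_R(0) \cap \partial\Gamma^\sigma$ with $[\mu, \lambda_R] \subseteq \partial\Gamma^\sigma$; passing to a subsequence along which the unit directions $(\lambda_R - \mu)/|\lambda_R - \mu|$ converge to some $v$ and using that $\partial\Gamma^\sigma$ is closed, one gets $\mu + tv \in \partial\Gamma^\sigma$ for every fixed $t \geq 0$, i.e.\ a ray in $\partial\Gamma^\sigma$ emanating from $\mu$.

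Granting this, the remaining implications rest on two elementary facts about the smooth convex hypersurface $\partial\Gamma^\sigma$. First, any ray $r \subseteq \partial\Gamma^\sigma$ emanating from $\mu$ automatically lies in the supporting hyperplane $T_\mu\partial\Gamma^\sigma$: its direction is the velocity at $\mu$ of a curve contained in $\partial\Gamma^\sigma$, hence tangent there and annihilated by the defining functional of $T_\mu$, so the whole ray stays on $T_\mu$. Second, such a ray is contained in $\overline{\Gamma^\sigma}$ and in the supporting hyperplane $T_\mu$, so $T_\mu$ touches $\partial\Gamma^\sigma$ at points running off to infinity, and the defining property of the tangent cone at infinity then places $r$ inside $\mathcal{C}_\sigma$ as well. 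These give (b)~$\Rightarrow$~(c) (apply the reformulation, then note $r \subseteq \partial\Gamma^\sigma \cap \mathcal{C}_\sigma$) and (c)~$\Rightarrow$~(d) (the ray from (c) lies in $\mathcal{C}_\sigma$, and by the first fact in $T_\mu\partial\Gamma^\sigma$). Finally, for (a)~$\Rightarrow$~(b): if $\Theta_{R_0}(\mu) > 0$ for some $R_0$, then by Lemma~\ref{3I-C-lemma10} $\Theta_R(\mu)$ is strictly increasing for $R \geq R_0$, so $\partial\Gamma^\sigma$ bends a definite amount away from the supporting hyperplane at $\mu$ along every direction reaching to infinity, which by the defining property of $\mathcal{C}_\sigma$ places $\mu$ in the open region $\mathcal{C}_\sigma^+$; contrapositively, $\mu \in \mathcal{C}_\sigma$ forces $\Theta_R(\mu) = 0$ for all $R$.

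The step I expect to be the main obstacle is precisely this last transition, i.e.\ ruling out the intermediate possibility that $\partial\Gamma^\sigma$ is only asymptotically flat near $\mu$ without actually containing a ray, and more generally pinning down the relationship between membership $\mu \in \mathcal{C}_\sigma$ and the behaviour of $\Theta_R(\mu)$. Lemma~\ref{3I-C-lemma10} is what makes it work: its dichotomy — either $\Theta_R(\mu) \equiv 0$, or $\Theta_R(\mu)$ is eventually strictly increasing — converts the soft asymptotic description of $\mathcal{C}_\sigma$ into the sharp geometric statements (b), (c), (d). The smoothness of $\partial\Gamma^\sigma$ supplied by \eqref{3I-20}--\eqref{3I-30} is used only in the first elementary fact, to identify the direction of an interior ray with a tangent direction at $\mu$.
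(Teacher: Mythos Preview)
The paper states Corollary~\ref{3I-cor20} without proof, treating it as an immediate consequence of Lemma~\ref{3I-C-lemma10} together with the definition of the tangent cone at infinity $\mathcal{C}_\sigma$. So there is no detailed argument in the paper to compare against; your write-up is effectively supplying what the paper omits.

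Your reformulation of (b) --- that $\Theta_R(\mu)=0$ for all $R>|\mu|$ is equivalent to $\partial\Gamma^\sigma$ containing a ray emanating from $\mu$ --- is correct and is indeed the geometric content of the corollary. The compactness/subsequence argument you give for it is sound, and the chain (b)$\Rightarrow$(c)$\Rightarrow$(d)$\Rightarrow$(a) then follows from elementary convex geometry as you describe.

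The one place where your argument is genuinely incomplete is exactly the step you flag yourself: the implication (a)$\Rightarrow$(b), and dually the claim that a ray contained in $\partial\Gamma^\sigma$ necessarily lies in $\mathcal{C}_\sigma$. Both rest on what you call ``the defining property of the tangent cone at infinity,'' but the paper never writes down a precise definition of $\mathcal{C}_\sigma$, and your justification (``bends a definite amount away from the supporting hyperplane \ldots\ which by the defining property of $\mathcal{C}_\sigma$ places $\mu$ in $\mathcal{C}_\sigma^+$'') is heuristic rather than a proof. Invoking that $\Theta_R$ is \emph{strictly} increasing via Lemma~\ref{3I-C-lemma10} is not needed here and does not by itself yield $\mu\in\mathcal{C}_\sigma^+$; what is really being used is the standard convex-geometric fact that a point on the boundary of a closed convex set lies on its asymptotic (tangent-at-infinity) cone if and only if the boundary contains a ray through that point. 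If you make that fact explicit (for whatever precise definition of $\mathcal{C}_\sigma$ you adopt), the equivalence (a)$\Leftrightarrow$(b) becomes a tautology combined with your ray reformulation, and the corollary is complete.
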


\begin{lemma}
\label{gblq-lemma-C20}
Let $\mu \in \ol{\Gamma^{\sigma}}$, $\mu \notin \mathcal{C}_{\sigma}$.
There exist positive constants
$\omega_{\mu}$, $N_{\mu}$ such that for any
$\lambda \in \partial \Gamma^{\sigma}$, when $|\lambda| \geq N_{\mu}$,
\begin{equation}
\label{gblq-C210}
\sum f_i (\lambda) (\mu_{i} - \lambda_i) \geq \omega_{\mu}.
\end{equation}
\end{lemma}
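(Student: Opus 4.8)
The plan is to exploit the concavity of $f$ together with the strict separation between $\mu$ and the tangent cone $\mathcal{C}_{\sigma}$, measured quantitatively by the function $\Theta_R(\mu)$ of Lemma~\ref{3I-C-lemma10}. First, observe that since $\mu \in \ol{\Gamma^{\sigma}}$ we have $f(\mu) \geq \sigma$, and since $\mu \notin \mathcal{C}_{\sigma}$, Corollary~\ref{3I-cor20} tells us that $\Theta_R(\mu) > 0$ for all $R$ large enough; in fact, by the monotonicity in Lemma~\ref{3I-C-lemma10}, once $\Theta_{R_0}(\mu) > 0$ we get $\Theta_R(\mu) \nearrow$ strictly. (When $\mu$ lies in the interior $\Gamma^\sigma$ one can work directly with $f(\mu) - \sigma > 0$; the boundary case $\mu \in \partial\Gamma^\sigma \setminus \mathcal{C}_\sigma$ is the one requiring $\Theta_R$.) The key point is that $\Theta_R(\mu)$ does not merely stay positive but grows without bound as $R \to \infty$ — this should follow from the strict monotonicity argument of Lemma~\ref{3I-C-lemma10} refined slightly, since a bounded nondecreasing $\Theta_R$ would force the curves $P \cap \partial\Gamma^\sigma$ to asymptote to lines through $\mu$, contradicting $\mu \notin \mathcal{C}_\sigma$.

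Next I would fix $\lambda \in \partial\Gamma^{\sigma}$ with $|\lambda|$ large and use concavity of $f$ along the segment from $\lambda$ to $\mu$. For any $0 \leq t \leq 1$,
\[
f(t\mu + (1-t)\lambda) \leq f(\lambda) + t \sum f_i(\lambda)(\mu_i - \lambda_i) = \sigma + t \sum f_i(\lambda)(\mu_i - \lambda_i),
\]
using $f(\lambda) = \sigma$ on $\partial\Gamma^\sigma$. On the other hand, choosing $R = |\lambda|$, by definition of $\Theta_R(\mu)$ there is some $t_0 \in [0,1]$ with
\[
f(t_0 \mu + (1-t_0)\lambda) - \sigma \geq \Theta_{|\lambda|}(\mu).
\]
Combining these, $t_0 \sum f_i(\lambda)(\mu_i - \lambda_i) \geq \Theta_{|\lambda|}(\mu) > 0$, so in particular $\sum f_i(\lambda)(\mu_i - \lambda_i) \geq \Theta_{|\lambda|}(\mu) \geq \Theta_{N_\mu}(\mu)$ once $|\lambda| \geq N_\mu$ (here I used $t_0 \leq 1$, and positivity of the left side lets us divide). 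Setting $\omega_\mu := \Theta_{N_\mu}(\mu)$ for a suitable threshold $N_\mu > |\mu|$ large enough that $\Theta_{N_\mu}(\mu) > 0$ — which exists precisely because $\mu \notin \mathcal{C}_\sigma$, via Corollary~\ref{3I-cor20}(b) — gives the claimed inequality \eqref{gblq-C210}.

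One technical point to handle carefully: the segment $t\mu + (1-t)\lambda$ must stay in the domain $\Gamma$ (where $f$ and its derivatives are defined) for concavity to apply; this holds because $\Gamma$ is convex and both endpoints lie in $\ol\Gamma$, with the open segment in $\Gamma$ since $\lambda \in \Gamma$ — the only subtlety is at $t=1$ if $\mu \in \partial\Gamma$, but we only need the inequality in the limit, or we can replace $\mu$ by $\mu + \epsilon \vn_0$ for a fixed interior direction and absorb the error. The main obstacle I anticipate is establishing that $\Theta_R(\mu)$ is bounded \emph{below} by a fixed positive constant independent of $R$ for $R$ large — equivalently, that $\inf_{R \geq N_\mu} \Theta_R(\mu) > 0$. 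Lemma~\ref{3I-C-lemma10} only gives monotonicity, so this infimum equals $\Theta_{N_\mu}(\mu)$, which is positive by Corollary~\ref{3I-cor20}; so actually the monotonicity does the job directly, and no growth statement is needed — the constant $\omega_\mu = \Theta_{N_\mu}(\mu)$ works uniformly for all $|\lambda| \geq N_\mu$. The cleanest route is therefore: pick $N_\mu$ with $\Theta_{N_\mu}(\mu) > 0$, set $\omega_\mu = \Theta_{N_\mu}(\mu)$, and run the concavity estimate above.
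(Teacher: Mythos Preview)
Your proposal is correct and follows essentially the same route as the paper's proof: split into the easy case $f(\mu) > \sigma$ (where concavity gives $\sum f_i(\lambda)(\mu_i - \lambda_i) \geq f(\mu) - \sigma$ directly) and the case $\mu \in \partial\Gamma^{\sigma}$, where one combines concavity along the segment $[\lambda,\mu]$ with the positivity and monotonicity of $\Theta_R(\mu)$ from Corollary~\ref{3I-cor20} and Lemma~\ref{3I-C-lemma10}. Your detours---the conjectured unbounded growth of $\Theta_R(\mu)$ and the worry about the segment leaving $\Gamma$---are unnecessary (the latter because $\mu \in \ol{\Gamma^{\sigma}} \subset \Gamma$ already), and you correctly recognize this by the end; the paper simply writes the concavity inequality in the one-line form $\sum f_i(\lambda)(\mu_i - \lambda_i) \geq \max_{0 \leq t \leq 1} f(t\mu + (1-t)\lambda) - \sigma \geq \Theta_R(\mu)$ and invokes monotonicity.
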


\begin{proof}
By the concavity of $f$,
\[ \sum f_i (\lambda) (\mu_{i} - \lambda_i)
    \geq f (\mu) - f (\lambda). \]
We see \eqref{gblq-C210} holds if $f (\mu) > \sigma$.
So we assume $\mu \in \partial \Gamma^{\sigma}$.
By Corollary~\ref{3I-cor20}, $\Theta_R (\mu) > 0$ for all $R$ sufficiently large,
and therefore, again by the concavity of $f$,
\[ \sum f_i (\lambda) (\mu_{i} - \lambda_i)
    \geq \max_{0 \leq t \leq 1} f (t \mu + (1-t) \lambda)
     - \sigma \geq \Theta_R (\mu) > 0 \]
for any $\lambda \in \partial B_R (0) \cap \partial \Gamma^{\sigma}$.
Since $\Theta_R (\mu)$ is increasing in $R$, Lemma~\ref{gblq-lemma-C20}
holds.
\end{proof}

Our main results of this paper is based on the following observation.

\begin{theorem}
\label{3I-th3}
Let $\mu \in \mathcal{C}_{\sigma}^+$.
For any $0 < \varepsilon < \mbox{dist} (\mu, \mathcal{C}_{\sigma})$
there exist positive constants
$\theta_{\mu}$, $R_{\mu}$ such that for any
$\lambda \in \partial \Gamma^{\sigma}$, when $|\lambda| \geq R_{\mu}$,
\begin{equation}
\label{3I-100}
\sum f_i (\lambda) (\mu_{i} - \lambda_i)
    \geq \theta_{\mu} + \varepsilon \sum f_i (\lambda).
\end{equation}
\end{theorem}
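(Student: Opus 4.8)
The plan is to deduce \eqref{3I-100} from Lemma~\ref{gblq-lemma-C20} by translating $\mu$ in the direction $\mathbf 1:=(1,\dots,1)$. Set $\nu:=\mu-\varepsilon\mathbf 1$. Because $\sum f_i(\lambda)\nu_i=\sum f_i(\lambda)\mu_i-\varepsilon\sum f_i(\lambda)$, the inequality \eqref{3I-100} is exactly
\[
\sum f_i(\lambda)(\nu_i-\lambda_i)\;\ge\;\theta_\mu\qquad\text{for }\lambda\in\partial\Gamma^\sigma,\ |\lambda|\ge R_\mu,
\]
so it suffices to prove this. I would first record that the hypothesis $\varepsilon<\mbox{dist}(\mu,\mathcal C_\sigma)$ keeps $\nu$ inside $\mathcal C_\sigma^+$, in particular $\nu\notin\mathcal C_\sigma$: this uses the convexity of $\mathcal C_\sigma^+$ and the fact that $\mathbf 1$ lies in the recession cone of $\Gamma^\sigma$, so that displacing $\mu$ against $\mathbf 1$ stays in $\mathcal C_\sigma^+$ until a distance $\mbox{dist}(\mu,\mathcal C_\sigma)$ has been travelled.

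Next I would split into cases according to the value of $f(\nu)$. If $f(\nu)>\sigma$, then by concavity of $f$ at $\lambda$, $\sum f_i(\lambda)(\nu_i-\lambda_i)\ge f(\nu)-f(\lambda)=f(\nu)-\sigma>0$ for every $\lambda\in\partial\Gamma^\sigma$, so we may take $\theta_\mu=f(\nu)-\sigma$ and $R_\mu=|\mu|$. If $f(\nu)=\sigma$, then $\nu\in\partial\Gamma^\sigma$ with $\nu\notin\mathcal C_\sigma$, so Lemma~\ref{gblq-lemma-C20} applied to $\nu$ gives the inequality with $\theta_\mu=\omega_\nu$, $R_\mu=N_\nu$.

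The remaining case $f(\nu)<\sigma$, i.e. $\nu\notin\overline{\Gamma^\sigma}$, is where the real work lies and which I expect to be the main obstacle: Lemma~\ref{gblq-lemma-C20} cannot be invoked, and pushing $\nu$ back into $\overline{\Gamma^\sigma}$ along $\mathbf 1$ would cost a multiple of the generally unbounded quantity $\sum f_i(\lambda)$. The estimate must instead be extracted from the asymptotic geometry: since the tangent cone at infinity to $\partial\Gamma^\sigma$ is $\mathcal C_\sigma$, a point $\lambda\in\partial\Gamma^\sigma$ with $|\lambda|$ large runs off in a direction asymptotic to $\mathcal C_\sigma$, whereas $\nu$ sits at a fixed positive distance from $\mathcal C_\sigma$ on the $\Gamma^\sigma$-side of every supporting hyperplane of $\mathcal C_\sigma$ — this is precisely what $\varepsilon<\mbox{dist}(\mu,\mathcal C_\sigma)$ buys. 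Consequently, for each fixed $t\in(0,1)$ the slice $t\nu+(1-t)\lambda$ is driven arbitrarily deep into $\Gamma^\sigma$ as $|\lambda|\to\infty$; combining this with the refined concavity bound $\sum f_i(\lambda)(\nu_i-\lambda_i)\ge\frac1t\bigl(f(t\nu+(1-t)\lambda)-\sigma\bigr)$, valid for $t\in(0,1]$, gives $\sum f_i(\lambda)(\nu_i-\lambda_i)\ge\theta_\mu$ for all $|\lambda|\ge R_\mu$, in fact for any preassigned $\theta_\mu>0$. The step requiring the most care is to make ``driven arbitrarily deep'' quantitative and uniform — concretely, to show that $\inf\{f(t\nu+(1-t)\lambda):0\le t\le 1,\ \lambda\in\partial\Gamma^\sigma,\ |\lambda|=R\}\to\infty$ as $R\to\infty$ — which I would approach by tracking the normalized gradients $Df(\lambda)/|Df(\lambda)|$, whose accumulation points as $|\lambda|\to\infty$ are inner unit normals to supporting hyperplanes of $\mathcal C_\sigma$, together with the uniform bound $Df(\lambda)\cdot(\lambda_0-\lambda)\ge f(\lambda_0)-\sigma>0$ for a fixed $\lambda_0\in\Gamma^\sigma$, and with separate attention to the subcase $|Df(\lambda)|\to\infty$ (i.e. $\lambda\to\partial\Gamma$).
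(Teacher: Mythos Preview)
Your reduction to $\nu=\mu-\varepsilon\mathbf 1$ and the observation that $\nu\in\mathcal C_\sigma^+$ is exactly the paper's first move, and your treatment of the cases $f(\nu)\ge\sigma$ is fine. The trouble is in the remaining case $f(\nu)<\sigma$, where your plan breaks down.

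First, the quantitative target you set yourself is simply false: you ask that
\[
\inf\{f(t\nu+(1-t)\lambda):0\le t\le 1,\ \lambda\in\partial\Gamma^\sigma,\ |\lambda|=R\}\to\infty,
\]
but the infimum is attained at $t=1$ where the value is $f(\nu)<\sigma$, independent of $R$. Even read charitably (fixed $t\in(0,1)$, or a supremum over $t$), the assertion that $f(t\nu+(1-t)\lambda)\to\infty$ is not justified by anything you have assumed, and your proposed route through accumulation points of $Df(\lambda)/|Df(\lambda)|$ does not obviously produce a uniform lower bound on $f(t\nu+(1-t)\lambda)-\sigma$; nor does the side estimate $Df(\lambda)\cdot(\lambda_0-\lambda)\ge f(\lambda_0)-\sigma$ feed back into a statement about $\nu$.

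The paper closes this gap with a single geometric idea that bypasses the case split entirely. Since $\nu\in\mathcal C_\sigma^+$, the tangent cone $\mathcal C(\nu)$ to the convex body $\overline{\Gamma^\sigma}$ with vertex $\nu$ meets $\partial\Gamma^\sigma$ in a \emph{compact} set; let $\partial\Gamma_{\sigma,\nu}$ denote the compact ``cap'' of $\partial\Gamma^\sigma$ that this tangent cone cuts off, and let $R_0$ be large enough that the cap lies in $B_{R_0}(0)$. For any $\lambda\in\partial\Gamma^\sigma$ with $|\lambda|>R_0$ the segment $[\nu,\lambda]$ must cross the cap at a point $\lambda^\varepsilon\in\partial\Gamma^\sigma$. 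Writing $\nu-\lambda$ as a positive multiple of $\lambda^\varepsilon-\lambda$ gives, by concavity and $f(\lambda^\varepsilon)=f(\lambda)=\sigma$,
\[
\sum f_i(\lambda)(\nu_i-\lambda_i)\ \ge\ \sum f_i(\lambda)(\lambda^\varepsilon_i-\lambda_i)\ \ge\ \omega_{\lambda^\varepsilon},
\]
the last inequality being Lemma~\ref{gblq-lemma-C20} applied at $\lambda^\varepsilon\in\overline{\Gamma^\sigma}$. Compactness of the cap then yields uniform constants
\[
\theta_\mu:=\inf_{\eta\in\partial\Gamma_{\sigma,\nu}}\omega_\eta>0,\qquad R_\mu:=\sup_{\eta\in\partial\Gamma_{\sigma,\nu}}N_\eta,
\]
which is exactly what is needed. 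In short, instead of trying to push $f$ to infinity along the segment, you should replace $\nu$ by an intermediate point $\lambda^\varepsilon$ that already lies on $\partial\Gamma^\sigma$ and ranges over a compact set, so that Lemma~\ref{gblq-lemma-C20} applies uniformly.
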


\begin{proof} 
Since $\mu \in \mathcal{C}_{\sigma}^+$
and $\varepsilon < \mbox{dist} (\mu, \mathcal{C}_{\sigma})$, we see that
$\mu^{\varepsilon} \equiv \mu - \varepsilon {\bf 1} \in \mathcal{C}_{\sigma}^+$
where ${\bf 1} = (1, \ldots, 1)$.
Let $\mathcal{C} ({\mu^{\varepsilon}})$ be the tangent cone to
 $\Gamma^{\sigma}$ with vertex $\mu^{\varepsilon}$.
Then $\partial \Gamma^{\sigma} \cap \mathcal{C} (\mu^{\varepsilon})$
is compact and therefore contained in a ball $B_{R_0} (0)$ for some $R_0 > 0$.
Let $\partial \Gamma_{\sigma, \mu^{\varepsilon}}$ denote the compact subset of
$\partial \Gamma^{\sigma}$ bounded by
$\partial \Gamma^{\sigma} \cap \mathcal{C} (\mu^{\varepsilon})$.

Let $R > R_0$ and $\lambda \in \partial B_{R} (0) \cap \partial \Gamma^{\sigma}$.
The segment $[\mu^{\varepsilon}, \lambda]$ goes through
$\partial \Gamma_{\sigma, \mu^{\varepsilon}}$ at a point $\lambda^{\varepsilon}$.
Since $f (\lambda) = f (\lambda^{\varepsilon}) = \sigma$,
by the concavity of $f$ we obtain
\[ \begin{aligned}
\sum f_i (\lambda) ((\mu_{i} - \varepsilon) - \lambda_i)
   \geq \,& \sum f_i (\lambda) (\lambda^\varepsilon_i - \lambda_i)
  \geq \omega_{\lambda^\varepsilon}
  \geq \inf_{\eta \in \partial \Gamma_{\sigma, \mu^{\varepsilon}}} \omega_{\eta}
   \equiv \theta_{\mu} > 0
\end{aligned} \]
when $R \geq R_{\mu} \equiv
\sup_{\eta \in \partial \Gamma_{\sigma, \mu^{\varepsilon}}} N_{\eta}$.
\end{proof}

Theorem~\ref{3I-th3} can not be used directly in the proofs
of \eqref{hess-a10} and \eqref{hess-a10b} in the next two sections.
So we modify it as follows.

Let $\mathcal{A}$ be the set of $n$ by $n$ symmetric
matrices $A = \{A_{ij}\}$ with eigenvalues $\lambda [A] \in \Gamma$.
Define the function $F$ on $\mathcal{A}$ by
\[ F (A) \equiv f (\lambda [A]). \]
Throughout this paper we shall use the notation
\[ F^{ij} (A) = \frac{\partial F}{\partial A_{ij}} (A), \;\;
  F^{ij, kl} (A) = \frac{\partial^2 F}{\partial A_{ij} \partial A_{kl}} (A). \]
The matrix $\{F^{ij}\}$ has eigenvalues $f_1, \ldots, f_n$ and
is positive definite by assumption (\ref{3I-20}), while (\ref{3I-30})
implies that $F$ is a concave function of $A_{ij}$ \cite{CNS3}.
Moreover, when $A$ is diagonal so is $\{F^{ij} (A)\}$, and the
  following identities hold
\[   F^{ij} (A) A_{ij} = \sum f_i \lambda_i, \]
\[  F^{ij} (A) A_{ik} A_{kj} = \sum f_i \lambda_i^2. \]

\begin{theorem}
\label{3I-th3'}
Let $A \in \mathcal{A}$, $\lambda (A) \in \mathcal{C}_{\sigma}^+$.
Then for any $0 < \varepsilon < \mbox{dist} (\lambda (A), \mathcal{C}_{\sigma})$
there exist positive constants
$\theta_{A}$, $R_{A}$ such that for any
$B \in \mathcal{A}$ with $\lambda (B) \in \partial \Gamma^{\sigma}$,
when $|\lambda (B)| \geq R_{A}$,
\begin{equation}
\label{3I-100'}
F^{ij} (B) (A_{ij} - B_{ij})
    \geq \theta_{A} + \varepsilon \sum F^{ii} (B).
\end{equation}
\end{theorem}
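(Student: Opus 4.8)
The plan is to reduce Theorem~\ref{3I-th3'} to Theorem~\ref{3I-th3} by passing to eigenvalues and exploiting the orthogonal invariance of $F$. The key algebraic fact is the following well-known consequence of concavity: if $A, B \in \mathcal{A}$ with $B$ diagonal, then
\[
F^{ij}(B)(A_{ij} - B_{ij}) \geq \sum_i f_i(\lambda(B))\big(\lambda_i(A) - \lambda_i(B)\big),
\]
where on the right the eigenvalues $\lambda_i(A)$ are ordered so as to match the diagonal entries $B_{ii}$ in the natural way (i.e.\ this is the statement that $F$, restricted to matrices, is bounded below by its ``diagonal'' linearization; it follows from the fact that $F^{ij}(B)$ is diagonal when $B$ is, together with $F^{ii}(B)A_{ii} \geq$ the appropriate symmetric-function expression, or more simply from concavity of $F$ plus $F(A) \geq \sigma$, $F(B) = \sigma$, and a rearrangement). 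Since both sides of \eqref{3I-100'} are invariant under replacing $(A, B)$ by $(O^T A O, O^T B O)$ for $O \in O(n)$ --- the left side because $F$ is orthogonally invariant and the trace is cyclic, the right side because $\sum F^{ii}(B) = \sum f_i(\lambda(B))$ --- I may assume $B$ is diagonal.

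First I would record the eigenvalue inequality above and the identity $\sum_i F^{ii}(B) = \sum_i f_i(\lambda(B))$. Next, set $\mu = \lambda(A) \in \mathcal{C}_\sigma^+$ and $\lambda = \lambda(B) \in \partial\Gamma^\sigma$. Applying Theorem~\ref{3I-th3} with this $\mu$ and the given $\varepsilon < \operatorname{dist}(\lambda(A), \mathcal{C}_\sigma)$, there exist $\theta_\mu, R_\mu > 0$ so that whenever $|\lambda| \geq R_\mu$,
\[
\sum_i f_i(\lambda)(\mu_i - \lambda_i) \geq \theta_\mu + \varepsilon \sum_i f_i(\lambda).
\]
Combining this with the eigenvalue inequality and $\sum_i f_i(\lambda) = \sum_i F^{ii}(B)$ yields \eqref{3I-100'} with $\theta_A := \theta_\mu$ and $R_A := R_\mu$, provided the ordering in the eigenvalue inequality is compatible with the statement of Theorem~\ref{3I-th3}. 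Since $f$ is symmetric, $f_i(\lambda)$ and $\mu_i$ may be simultaneously permuted without changing $\sum_i f_i(\lambda)(\mu_i - \lambda_i)$, so one is free to choose the ordering that makes the diagonalization argument work.

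The main obstacle is getting the rearrangement/ordering bookkeeping exactly right: when $B$ is diagonalized, its diagonal entries appear in some order, and the inequality $F^{ij}(B)(A_{ij}-B_{ij}) \geq \sum_i f_i(\lambda(B))(\lambda_{\pi(i)}(A) - \lambda_i(B))$ holds only for a specific permutation $\pi$ (dictated by concavity together with the fact that $f_i$ is largest where $\lambda_i$ is smallest, so that a ``sorting'' inequality applies). One must check that this $\pi$ is one of the orderings permitted in Theorem~\ref{3I-th3} --- but since that theorem was stated for the unordered tuple $\mu = \lambda(A)$ and $f$ is symmetric, any ordering is allowed, so the difficulty is really just one of careful exposition rather than a substantive gap. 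A secondary point worth a line of justification is that the diagonal-linearization inequality uses $\lambda(A) \in \Gamma$ (so that $A \in \mathcal{A}$ and $F^{ij}(B)$ makes sense paired against $A$) together with concavity of $F$ on all of $\mathcal{A}$, both of which are in force here.
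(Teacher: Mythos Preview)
Your approach is correct and genuinely different from the paper's. The paper does \emph{not} reduce to Theorem~\ref{3I-th3} via eigenvalues; instead it reruns the entire geometric argument (tangent cone, compact cap $\partial\Gamma_{\sigma,\lambda(A^{\varepsilon})}$, Lemma~\ref{gblq-lemma-C20}) at the matrix level, using only the concavity of $F$ on $\mathcal{A}$. Concretely, the paper defines a matrix-level $\Theta_R(A)$, sets $A^{\varepsilon}=A-\varepsilon I$, locates for each large $B$ a matrix $C$ on the segment from $A^\varepsilon$ to $B$ with $\lambda(C)\in\partial\Gamma_{\sigma,\lambda(A^{\varepsilon})}$, and chains $F^{ij}(B)(A^{\varepsilon}_{ij}-B_{ij})\geq F^{ij}(B)(C_{ij}-B_{ij})\geq \Theta_R(C)$, finishing by compactness of the eigenvalue cap.

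Your route---diagonalize $B$, use Schur--Horn majorization plus the rearrangement inequality to get $\sum_i f_i(\lambda(B))A_{ii}\geq \sum_i f_i(\lambda(B))\mu_{\pi(i)}$ for the opposite-sorting permutation $\pi$, then invoke Theorem~\ref{3I-th3} as a black box---is cleaner and more modular: it avoids repeating the tangent-cone geometry and isolates the passage from vectors to matrices in a single linear-algebra step. The paper's route, by contrast, buys freedom from Schur--Horn and keeps the whole argument within concavity of $F$. Two small points to tighten in your write-up: first, your parenthetical ``or more simply from concavity of $F$ plus $F(A)\geq\sigma$'' is not available, since $\lambda(A)\in\mathcal{C}_\sigma^+$ does not force $F(A)\geq\sigma$; the Schur--Horn/rearrangement argument is the one that works. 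Second, since the minimizing permutation $\pi$ depends on $B$, you should note explicitly that the constants $\theta_{\mu},R_{\mu}$ from Theorem~\ref{3I-th3} are invariant under permutations of $\mu$ (immediate from the symmetry of $f$ and $\Gamma$), so a single $\theta_A,R_A$ works for all $B$.
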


\begin{proof}
Suppose first that $\lambda (A) \in \Gamma^{\sigma}$. Then,
since $\lambda (A) \notin \mathcal{C}_{\sigma}$,
\[ (A, B) \equiv \{t A + (1-t) B: 0 < t < 1\} \]
is completely contained in $\Gamma^{\sigma}$ for any $B \in \mathcal{A}$ with
$\lambda (B) \in \partial B_R (0) \cap \partial \Gamma^{\sigma}$
when $R$ is sufficiently large. Therefore,
\[ \Theta_R (A) \equiv \inf_{\lambda (B) \in \partial B_R (0) \cap \partial \Gamma^{\sigma}}
        \max_{0 \leq t \leq 1} F (t A + (1-t) B) - \sigma > 0 \]
and $\Theta_R (A)$ is increasing in $R$. By the concavity of $F$ we have
\[ F^{ij} (B) (A_{ij} - B_{ij})
   \geq \max_{0 \leq t \leq 1} F (t A + (1-t) B) - \sigma \geq \Theta_R (A) \]

In the general case, let
$A^{\varepsilon} = A - \varepsilon I \in \mathcal{A}$
so $\lambda (A^{\varepsilon}) = \lambda (A) - \varepsilon {\bf 1}$.
When $R$ is sufficiently large, for any $B \in \mathcal{A}$ with
$\lambda (B) \in \partial B_R (0) \cap \partial \Gamma^{\sigma}$
we can find $C \in (A, B)$ such that $\lambda (C)$ is contained in
the compact set $\partial \Gamma_{\sigma, \lambda (A^{\varepsilon})}$.
As before,
\[ F^{ij} (B) (A_{ij} - \varepsilon \delta_{ij} - B_{ij})
\geq F^{ij} (B) (C_{ij} - B_{ij}) \geq \Theta_R (C). \]
This completes the proof of Theorem~\ref{3I-th3'}
in view of the compactness of
$\partial \Gamma_{\sigma, \lambda (A^{\varepsilon})}$.
\end{proof}

The following inequality is taken from \cite{GS10} with minor modifications.
We shall need it in the boundary estimates in Section~\ref{hess-b}.

\begin{proposition}
\label{prop5.2}
Let $A = \{A_{ij}\} \in \mathcal{A}$ and set $F^{ij} =  F^{ij} (A)$.
There is $c_0 > 0$ and an index $r$ such that
 \begin{equation}
 \label{eq5.170}
  \sum_{l<n} F^{ij} A_{il} A_{lj} \geq c_0 \sum_{i \neq r}  f_i \lambda_i^2.
\end{equation}
\end{proposition}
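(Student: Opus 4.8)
The plan is to reduce \eqref{eq5.170} to a statement about the eigenvalues of $A$. Since the quantity $\sum_{l<n} F^{ij} A_{il} A_{lj}$ is the value at $A$ of a function that, when $A$ is diagonalized, becomes $\sum_{i} f_i \sum_{l<n} (A_{il})^2$, the natural first move is to work in a basis that diagonalizes $A$, so that $A_{ij} = \lambda_i \delta_{ij}$ and $F^{ij} = f_i \delta_{ij}$ (using the identities recorded just before the statement). In that basis the left side is simply $\sum_{i<n} f_i \lambda_i^2$, i.e. we drop the single term $f_n \lambda_n^2$ from $\sum f_i \lambda_i^2$. So after diagonalization the inequality to prove is
\[
 \sum_{i<n} f_i \lambda_i^2 \geq c_0 \sum_{i\neq r} f_i \lambda_i^2
\]
for an appropriate choice of index $r$ and a structural constant $c_0>0$. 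If one could take $r=n$ this would be immediate with $c_0=1$; the point of allowing general $r$ is precisely to handle the case where $\lambda_n$ is (comparable to) the largest eigenvalue in absolute value, so that the term $f_n\lambda_n^2$ one wants to discard is the dominant one.

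The key step is then the following dichotomy. Order things so that $\lambda_1 \geq \cdots \geq \lambda_n$; since $\lambda[A]\in\Gamma$, by \eqref{3I-20} (positivity of the $f_i$) one expects control relating the sizes of the $f_i$ to the ordering of the $\lambda_i$ — concretely, the standard fact that $f_1 \leq f_2 \leq \cdots \leq f_n$ along with comparisons of $f_i\lambda_i^2$ across indices. I would split into two cases according to whether $\lambda_n \geq -\gamma \lambda_1$ for a small fixed $\gamma>0$, or $\lambda_n < -\gamma\lambda_1$. In the first case the negative eigenvalues are all bounded below by $-\gamma\lambda_1$, and one takes $r=n$: the discarded term $f_n\lambda_n^2$ is then controlled by the surviving sum because $\lambda_1^2$ (hence some $f_i\lambda_i^2$ with $i<n$, using the elementary estimates on the $f_i$) dominates; one gets \eqref{eq5.170} with $r=n$. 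In the second case $|\lambda_n|$ is comparable to $|\lambda_1|$; here one instead picks $r$ to be an index achieving $\max_i f_i\lambda_i^2$, or more cleverly the index $1$ or $n$, and argues that $\sum_{i<n} f_i\lambda_i^2$ already contains a term ($f_1\lambda_1^2$) comparable to the discarded one. A cleaner route, which I would try to push through uniformly, is: let $r$ be the index with $|\lambda_r|$ maximal; then for every $i\neq r$ we trivially have $f_i\lambda_i^2$ on the right, and on the left we keep $\sum_{i<n}f_i\lambda_i^2$; if $r=n$ the two sums are literally equal and we are done with $c_0=1$, while if $r<n$ then the left sum still contains the index $r$ term $f_r\lambda_r^2$, which is $\geq f_i\lambda_i^2$-type bounds away from each term on the right after using monotonicity of $f_i$ in $i$ — this should yield a dimensional constant $c_0 = c_0(n)$.

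The main obstacle I anticipate is making the comparison $f_r\lambda_r^2 \gtrsim f_i\lambda_i^2$ rigorous when $r<n$ but $i>r$: there the eigenvalue $\lambda_r$ is larger in absolute value but the derivative $f_r$ may be smaller than $f_i$, so one cannot just compare term by term. This is exactly where one must invoke a genuine structural inequality for symmetric concave $f$ on $\Gamma$ — something like $f_i \lambda_i \leq C f_j \lambda_j$ or a bound of the form $f_i |\lambda_i| \leq \tfrac1n \sum f_k|\lambda_k| + (\text{controlled error})$, valid because $\Gamma\supseteq\Gamma^+$ and $f$ is concave and increasing. I would extract the needed inequality from the concavity relation $\sum f_k(\mu_k-\lambda_k)\geq f(\mu)-f(\lambda)$ applied to well-chosen $\mu$ (e.g. permutations of $\lambda$, which leave $f$ invariant by symmetry), exactly in the spirit of Lemma~\ref{gblq-lemma-C20}; since the proposition is quoted from \cite{GS10}, I expect the argument there proceeds along these lines, and the write-up amounts to carefully choosing $r$ and tracking the single dimensional constant $c_0$ through the two cases.
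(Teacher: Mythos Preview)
Your first move---changing basis to diagonalize $A$---is where the argument breaks down. The quantity $\sum_{l<n} F^{ij} A_{il} A_{lj}$ is \emph{not} invariant under an orthogonal change of basis: the restriction $l<n$ singles out the direction $e_n$ in the \emph{given} coordinate system. Written invariantly, the left side is $\mathrm{tr}(F A P A)$ with $P$ the projection onto $\mathrm{span}\{e_1,\ldots,e_{n-1}\}$, and $P$ does not commute with a general orthogonal conjugation. In the application (Section~\ref{hess-b}) the basis is fixed with $e_n$ normal to $\partial M$, and $\{U_{ij}\}$ has no reason to be diagonal in that frame; the proposition is needed precisely because one cannot simultaneously diagonalize $A$ and keep $e_n$ fixed. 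So your reduction to $\sum_{i<n} f_i\lambda_i^2$ is only valid when $A$ is already diagonal in the given basis, in which case the inequality is trivial (take $r=n$, $c_0=1$), and your subsequent case analysis is solving the wrong problem.

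The paper's proof keeps both bases in play. Writing $B=(b_{ij})$ for the orthogonal matrix with $B A B^T=\mathrm{diag}(\lambda)$ and $B\{F^{ij}\}B^T=\mathrm{diag}(f)$, one computes
\[
\sum_{l<n} F^{ij} A_{il} A_{lj} \;=\; \sum_i f_i \lambda_i^2 \Big(\sum_{l<n} b_{li}^2\Big),
\]
so each term $f_i\lambda_i^2$ appears with weight $w_i:=\sum_{l<n} b_{li}^2 = 1 - b_{ni}^2 \in [0,1]$. The issue is then purely linear-algebraic: can more than one weight $w_i$ be small? The paper shows not, by a cofactor-expansion argument on $\det B = 1$: if $w_1<\theta^2$ then $|b_{n1}|$ is close to $1$, forcing the complementary $(n-1)\times(n-1)$ minor to have determinant close to $1$, which in turn forces every other row to have $w_i \geq ((1-c_1\theta)/c_2)^2$. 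The exceptional index is $r$, and $c_0$ is this dimensional lower bound on the remaining weights. No structural inequality for $f$ (concavity, symmetry, the cone $\Gamma$) is used at all; the proposition is really a statement about orthogonal matrices.
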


\begin{proof}
 Let $B = \{b_{ij}\}$ be an orthogonal matrix that simultaneously
diagonalizes
 $\{F^{ij}\}$ and $\{A_{ij}\}$:
 \[ F^{ij} b_{li} b_{kj} = f_k \delta_{kl}, \;\;
    A_{ij} b_{li} b_{kj} = \lambda_k \delta_{kl}. \]
Then 
\begin{equation}
\label{eq5.180}
\begin{aligned}
\sum_{l<n} F^{ij} A_{il} A_{lj}
 = & \, \sum_{l<n} f_i \lambda_i^2 b_{li}^2.
\end{aligned}
\end{equation}

Suppose for some $i$, say $i=1$ and $0 < \theta < 1$ to be determined
that
\[ \sum_{l<n} b_{l1}^2 < \theta^2. \]
Then
\[ b_{n1}^2 = 1 -  \sum_{l<n} b_{l1}^2 > 1 - \theta^2 > 0. \]
Expanding $\det B$ by cofactors along the first column gives
\[1 = \det{B} = b_{11}C^{11} 
          +\ldots + b_{(n-1)1}C^{1 n-1}+ b_{n1}\det{D}
     \leq c_1 \theta + |b_{n1} \det D|, \]
where $C^{1j}$ are the cofactors and $D$ is the $n-1$ by $n-1$ matrix
\begin{equation}
D = \begin{bmatrix}
 b_{12}  & \dots   & b_{(n-1)2}  \\
 \vdots  & \ddots  & \vdots \\
 b_{1n}  & \ldots  & b_{(n-1)n}
   \end{bmatrix}.
\end{equation}
Therefore,
\[ |\det{D}|\geq \frac{1-c_1 \theta}{|b_{n1}|} \geq 1-c_1 \theta. \]
Now expanding  $\det {D}$
by cofactors along row $i \geq 2$ gives
\[ |\det D| \leq c_2 \Big(\sum_{l<n}b_{li}^2\Big)^{\frac{1}{2}} \]
by Schwarz inequality. Hence
\begin{equation}
\label{eq5.190}
 \sum_{l<n}b_{li}^2 \geq
    \Big(\frac{1-c_1 \theta}{c_2}\Big)^2.
\end{equation}
 Choosing $\theta < \frac1{2c_1}$, \eqref{eq5.190} and \eqref{eq5.180}
imply
\[ \sum_{l<n} F^{ij} A_{il} A_{lj} \geq c_0 \sum_{i\neq 1} f_i \lambda_i^2. \]
This proves \eqref{eq5.170}.
\end{proof}

\begin{lemma}
\label{lem1.10}
 Suppose f satisfies (\ref{3I-20}), (\ref{3I-30}) and (\ref{3I-55}). Then
\begin{equation}
 \sum_{i\neq r} f_i \lambda_i^2  \geq \frac{1}{n}  \sum f_i \lambda_i^2
  \;\; \mbox{if $\lambda_r<0$}.
\end{equation}
\end{lemma}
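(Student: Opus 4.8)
The plan is to reduce the claim to the equivalent inequality $f_r\lambda_r^2 \le (n-1)\sum_{i\neq r}f_i\lambda_i^2$; once this is in hand, adding $\sum_{i\neq r}f_i\lambda_i^2$ to both sides and dividing by $n$ gives exactly $\sum_{i\neq r}f_i\lambda_i^2 \ge \frac1n\sum f_i\lambda_i^2$. The essential tool will be the standard ordering property of a symmetric concave function with positive first derivatives: if $\lambda_a\le\lambda_b$ then $f_a(\lambda)\ge f_b(\lambda)$. I would record this first, obtaining it from \eqref{3I-20} and \eqref{3I-30} by applying concavity to the affine curve $t\mapsto f(\lambda+t(e_a-e_b))$ on $0\le t\le\lambda_b-\lambda_a$, whose two endpoints coincide by symmetry of $f$; being concave, its derivative $f_a-f_b$ is nonincreasing and therefore nonnegative at $t=0$.

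Next I would split into two cases according to whether $\lambda_r$ attains $\min_i\lambda_i$. If it does not, choose $m\neq r$ with $\lambda_m=\min_i\lambda_i<\lambda_r<0$; then $f_m\ge f_r$ by the ordering property and $\lambda_m^2\ge\lambda_r^2$, so $\sum_{i\neq r}f_i\lambda_i^2\ge f_m\lambda_m^2\ge f_r\lambda_r^2$, which is more than needed. If instead $\lambda_r=\min_i\lambda_i$, the ordering property gives $f_r\ge f_i$ for all $i$, hence $\sum_{i\neq r}f_i\le(n-1)f_r$. Condition \eqref{3I-55} yields $0\le\sum f_i\lambda_i$, i.e. $f_r|\lambda_r|=-f_r\lambda_r\le\sum_{i\neq r}f_i\lambda_i\le\sum_{i\neq r}f_i|\lambda_i|$, and Cauchy--Schwarz with weights $f_i$ gives $\big(\sum_{i\neq r}f_i|\lambda_i|\big)^2\le\big(\sum_{i\neq r}f_i\big)\big(\sum_{i\neq r}f_i\lambda_i^2\big)\le(n-1)f_r\sum_{i\neq r}f_i\lambda_i^2$. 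Combining and cancelling the positive factor $f_r$ produces $f_r\lambda_r^2\le(n-1)\sum_{i\neq r}f_i\lambda_i^2$.

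The only point requiring care is the ordering lemma together with the bookkeeping over possibly equal eigenvalues in the case split (and noting $n\ge2$, so $n-1\ge1$); beyond that the argument is just Cauchy--Schwarz and the structure condition \eqref{3I-55}, so I do not anticipate a serious obstacle.
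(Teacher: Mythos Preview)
Your proof is correct and follows essentially the same route as the paper's: both arguments use the ordering property $f_i\ge f_j$ when $\lambda_i\le\lambda_j$ (from concavity and symmetry), then apply condition~\eqref{3I-55} together with Cauchy--Schwarz in the weights $f_i$ to bound the term corresponding to the smallest eigenvalue. The only cosmetic difference is that the paper avoids the explicit case split by first noting $\sum_{i\neq r}f_i\lambda_i^2\ge\sum_{i\neq n}f_i\lambda_i^2$ (where $\lambda_n=\min_i\lambda_i$) and then carrying out the Cauchy--Schwarz step once for the index $n$.
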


\begin{proof}
Suppose $\lambda_1 \geq \cdots \geq \lambda_n$ and $\lambda_r < 0 $.
By the concavity condition (\ref{3I-30}) we have
$f_n \geq f_i > 0$ for all $i$ and in particular
$f_n \lambda_n^2 \geq  f_r \lambda_r^2$.
By (\ref{3I-55}),
 \[ \sum_{i\neq n}f_i \lambda_i \geq - f_n \lambda_n = f_n |\lambda_n|. \]
By Schwarz inequality,
\[ f_n^2 \lambda_n^2
   \leq \sum_{i\neq n} f_i \sum_{i \neq n} f_i \lambda_i^2
   \leq (n-1) f_n \sum_{i \neq n}  f_i \lambda_i^2. \]
 Therefore,
\[ \sum_{i \neq r} f_i \lambda_i^2 \geq \sum_{i \neq n}  f_i \lambda_i^2
     \geq \frac{1}{n} \sum_{i \neq n}  f_i \lambda_i^2 + \frac{1}{n} f_n \lambda_n^2
      = \frac{1}{n} \sum f_i \lambda_i^2 \]
completing the proof.
\end{proof}

\begin{corollary}
\label{cor1.10}
 Suppose f satisfies (\ref{3I-20})-(\ref{3I-30}). Then for any index $r$
\begin{equation}
\label{C210}
\sum f_i |\lambda_i| \leq \epsilon \sum_{i\neq r} f_i \lambda_i^2
  + C \Big(1 + \frac{1}{\epsilon} \sum f_i\Big).
\end{equation}
\end{corollary}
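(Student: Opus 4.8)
The plan is to peel off the single index $r$, bound the remaining $n-1$ terms by a weighted Young inequality, and reduce the whole statement to an estimate for $f_r|\lambda_r|$. For every $i$ the scalar inequality $|\lambda_i|\le\epsilon\lambda_i^2+\frac1{4\epsilon}$ holds; multiplying by $f_i>0$ (condition~\eqref{3I-20}) and summing over $i\ne r$ gives $\sum_{i\ne r}f_i|\lambda_i|\le\epsilon\sum_{i\ne r}f_i\lambda_i^2+\frac1{4\epsilon}\sum f_i$. Hence it suffices to show $f_r|\lambda_r|\le\epsilon\sum_{i\ne r}f_i\lambda_i^2+C(1+\frac1\epsilon\sum f_i)$, after which one adds the two inequalities (up to adjusting $\epsilon$ by a harmless numerical factor). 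If one additionally has~\eqref{3I-55} and $\lambda_r<0$, this last step is unnecessary: Lemma~\ref{lem1.10} then lets one replace $\sum_{i\ne r}f_i\lambda_i^2$ by $\frac1n\sum f_i\lambda_i^2$, and the corollary follows at once from Young's inequality applied to all $n$ terms.

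For the remaining term I would write $f_r\lambda_r=\sum_j f_j\lambda_j-\sum_{j\ne r}f_j\lambda_j$. The second sum is at most $\sum_{j\ne r}f_j|\lambda_j|$ in absolute value, already handled, so the real task is the bound $|\sum_j f_j\lambda_j|\le C(1+\sum f_j)$. Concavity~\eqref{3I-30} evaluated at the interior point ${\bf 1}=(1,\dots,1)\in\Gamma^+\subseteq\Gamma$ gives $f({\bf 1})\le f(\lambda)+\sum f_j({\bf 1}-\lambda_j)$, i.e. $\sum f_j\lambda_j\ge f(\lambda)-f({\bf 1})+\sum f_j\ge -C$ whenever $f(\lambda)$ is bounded below — which is the case for the eigenvalue vectors that actually occur (on a level surface $f(\lambda)=\sigma$, or for an admissible solution, where $f(\lambda[\nabla^2u+\chi])=\psi$). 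For the upper bound I would invoke Theorem~\ref{3I-th3} (equivalently Lemma~\ref{gblq-lemma-C20}): fixing some $\mu\in\mathcal{C}_\sigma^+$, for $|\lambda|$ large one gets $\sum f_j(\mu_j-\lambda_j)\ge 0$, so $\sum f_j\lambda_j\le\sum f_j\mu_j\le C\sum f_j$, while for $|\lambda|$ bounded the estimate is immediate. Combining, $f_r|\lambda_r|\le\sum_{j\ne r}f_j|\lambda_j|+C(1+\sum f_j)\le\epsilon\sum_{i\ne r}f_i\lambda_i^2+C(1+\frac1\epsilon\sum f_i)$, as required.

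The one genuinely delicate point is the term $f_r|\lambda_r|$: it cannot be absorbed into $\sum_{i\ne r}f_i\lambda_i^2$ by any pointwise inequality, because $|\lambda_r|$ may dominate all the other eigenvalues; one is therefore forced to route through the trace $\sum_j f_j\lambda_j$ and use the concavity~\eqref{3I-30} — the only place where more than ellipticity~\eqref{3I-20} is needed — together with the harmless boundedness of $f(\lambda)$ on the relevant set of admissible $\lambda$'s. Everything else is routine manipulation of Young's inequality.
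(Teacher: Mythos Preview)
There is a genuine error in your treatment of the concavity inequality. From
\[
f(\mathbf{1}) \leq f(\lambda) + \sum f_j (1 - \lambda_j)
\]
one obtains
\[
\sum f_j \lambda_j \;\leq\; f(\lambda) - f(\mathbf{1}) + \sum f_j,
\]
which is an \emph{upper} bound on $\sum f_j\lambda_j$, not the lower bound you claim. Your subsequent appeal to Theorem~\ref{3I-th3} also produces an upper bound ($\sum f_j\lambda_j \leq \sum f_j\mu_j$), so in effect you have two upper bounds and no lower bound. Since $f_r|\lambda_r| = \pm f_r\lambda_r$ according to the sign of $\lambda_r$, your decomposition $f_r\lambda_r = \sum_j f_j\lambda_j - \sum_{j\ne r} f_j\lambda_j$ controls $f_r|\lambda_r|$ via concavity only when $\lambda_r \geq 0$; when $\lambda_r < 0$ you would need $\sum f_j\lambda_j \geq -C(1+\sum f_j)$, and neither concavity nor Theorem~\ref{3I-th3} supplies this.

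The paper proceeds exactly by this sign dichotomy. For $\lambda_r \geq 0$ it uses the (correctly oriented) concavity inequality, just as you intended. For $\lambda_r < 0$ it does \emph{not} try to bound $\sum f_j\lambda_j$ from below; instead it invokes Lemma~\ref{lem1.10} to replace $\sum_{i\ne r} f_i\lambda_i^2$ by $\frac{1}{n}\sum_i f_i\lambda_i^2$ and then applies Young's inequality to all $n$ terms --- precisely the shortcut you mention parenthetically. Note that Lemma~\ref{lem1.10} requires \eqref{3I-55}; so the paper's own proof tacitly uses \eqref{3I-55} even though the statement of the corollary lists only \eqref{3I-20}--\eqref{3I-30}. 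Your instinct that the $\lambda_r<0$ case is where the extra structure enters is therefore correct; but that structure is \eqref{3I-55} (giving $\sum f_j\lambda_j \geq 0$), not concavity, and it cannot be bypassed by Theorem~\ref{3I-th3}.
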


\begin{proof}
By the concavity of $f$,
\[ f ({\bf 1}) -  f (\lambda) \leq \sum f_i (1 - \lambda_i). \]
Therefore, if $\lambda_r \geq 0$ then
\[ f_r \lambda_r \leq f (\lambda) - f ({\bf 1}) +  \sum f_i
     + \sum_{\lambda_i < 0} f_i  |\lambda_i|
     \leq \epsilon \sum_{\lambda_i < 0} f_i  \lambda_i^2
          + \frac{C}{\epsilon} \sum f_i + C. \]

Suppose $\lambda_r < 0$. By Lemma~\ref{lem1.10} we have
\[ \sum f_i |\lambda_i|
   \leq \frac{\epsilon}{n} \sum f_i \lambda_i^2 + \frac{n}{4 \epsilon} \sum f_i
   \leq \epsilon \sum_{i\neq r} f_i \lambda_i^2 + \frac{C}{\epsilon} \sum f_i. \]
This proves \eqref{C210}.
\end{proof}

\bigskip

\section{Global bounds for the second derivatives}
\label{hess-g}
\setcounter{equation}{0}

\medskip

The goal of this section is to prove \eqref{hess-a10} under the hypotheses
(\ref{3I-20}), (\ref{3I-30}), (\ref{3I-40'}) and \eqref{3I-200}.
We start with a brief explanation of our notation and
basic formulas needed.
Throughout the paper $\nabla$ denotes the Levi-Civita connection
of  $(M^n, g)$. The curvature tensor is defined by
\[ R (X, Y) Z = - \nabla_X \nabla_Y Z + \nabla_Y \nabla_X Z
                 + \nabla_{[X, Y]} Z. \]

Let $e_1, \ldots, e_n$ be local frames on $M^n$ and denote
$g_{ij} = g (e_i, e_j)$, $\{g^{ij}\} = \{g_{ij}\}^{-1}$,
and $\nabla_i = \nabla_{e_i}$,
$\nabla_{ij} = \nabla_i \nabla_j - \nabla_{\nabla_i e_j}$, etc.
Define $ R_{ijkl}$, $R^i_{jkl}$ and 
 $\Gamma_{ij}^k$ respectively by
\[ 
   R_{ijkl} = \langle R (e_k, e_l) e_j, e_i \rangle, \;\;
   R^i_{jkl} = g^{im} R_{mjkl}, \;\; \nabla_i e_j = \Gamma_{ij}^k e_k.  \]

For a differentiable function $v$ defined on $M^n$, we
identify $\nabla v$ with the gradient of $v$, and
$\nabla^2 v$ denotes the Hessian of $v$ which is given by
$\nabla_{ij} v = \nabla_i (\nabla_j v) - \Gamma_{ij}^k \nabla_k v$.
Recall that $\nabla_{ij} v =\nabla_{ji} v$ and
\begin{equation}
\label{hess-A70}
 \nabla_{ijk} v - \nabla_{jik} v = R^l_{kij} \nabla_l v,
\end{equation}
\begin{equation}
\label{hess-A75}
\nabla_{ijkl} v - \nabla_{ikjl} v =
R^m_{ljk} \nabla_{im} v + \nabla_i R^m_{ljk} \nabla_m v,
\end{equation}
\begin{equation}
\label{hess-A76}
\nabla_{ijkl} v - \nabla_{jikl} v =
R^m_{kij} \nabla_{ml} v + R^m_{lij} \nabla_{km} v.
\end{equation}
From (\ref{hess-A75}) and (\ref{hess-A76}) we obtain
\begin{equation}
\label{hess-A80}
\begin{aligned}
\nabla_{ijkl} v - \nabla_{klij} v
= R^m_{ljk} \nabla_{im} v & + \nabla_i R^m_{ljk} \nabla_m v
      + R^m_{lik} \nabla_{jm} v \\
  & + R^m_{jik} \nabla_{lm} v
      + R^m_{jil} \nabla_{km} v + \nabla_k R^m_{jil} \nabla_m v.
\end{aligned}
\end{equation}

Let $u \in C^4 (M)$ be an admissible solution of equation~\eqref{3I-10}.
Under orthonormal local frames $e_1, \ldots, e_n$,
equation~\eqref{3I-10} is expressed in the form
\begin{equation}
\label{3I-10'}
F (U_{ij}) := f (\lambda [U_{ij}]) = \psi
\end{equation}
where $U_{ij} = \nabla_{ij} u +  \chi_{ij}$. For simplicity, we shall still write equation~\eqref{3I-10} in the form \eqref{3I-10'} even if $e_1, \ldots, e_n$
are not necessarily orthonormal, although more precisely it should be
\[ F (\gamma^{ik} U_{kl} \gamma^{lj}) = \psi \]
where $\{\gamma^{ij}\}$ is the square root of $\{g^{ij}\}$:
$\gamma^{ik} \gamma^{kj} = g^{ij}$; as long as we use covariant derivatives
whenever we differentiate the equation it will make no difference.

We now begin the proof of \eqref{hess-a10}.
Let
\[ W = \max_{x \in \bar{M}} \max_{\xi \in T_x M^n, |\xi| = 1}
  (\nabla_{\xi \xi} u + \chi (\xi, \xi)) e^{\eta} \]
where $\eta$ is a function to be determined.
Suppose $W > 0$ and is achieved at an interior point
$x_0 \in M$ for some unit vector $\xi \in T_{x_0}M^n$.
Choose smooth orthonormal local frames $e_1, \ldots, e_n$
about $x_0$ such that $e_1(x_0) = \xi$
and $\{U_{ij} (x_0)\}$ is diagonal. We may also assume 
that $\nabla_i e_j = 0$ and therefore $\Gamma_{ij}^k = 0$ at $x_0$
for all $1 \leq i,j,k \leq n$.
At the point $x_0$ where the function
$\log U_{11} + \eta$
(defined near $x_0$) attains its maximum,
we have for $i = 1, \ldots, n$,
\begin{equation}
\label{hess-a30}
\frac{\nabla_i U_{11}}{U_{11}} + \nabla_i \eta = 0,
\end{equation}
\begin{equation}
\label{hess-a40}
\begin{aligned}
\frac{\nabla_{ii} U_{11}}{U_{11}}
   - \Big(\frac{\nabla_i U_{11}}{U_{11}}\Big)^2 + \nabla_{ii} \eta \leq 0.
\end{aligned}
\end{equation}

Here we wish to add some explanations which might be helpful to the reader. 
First we note that $U_{1j} (x_0) = 0$ for $j \geq 2$ so $\{U_{ij} (x_0)\}$
can be diagonalized. 
To see this let $e^{\theta} = e_1 \cos \theta + e_j \sin \theta$. Then
\[ U_{e^{\theta} e^{\theta}} (x_0) = U_{11} \cos^2 \theta + 
     2 U_{1j} \sin \theta \cos \theta +  U_{jj} \sin^2 \theta \]
has a maximum at $\theta = 0$. Therefore,
\[ \frac{d}{d \theta} U_{e^{\theta} e^{\theta}} (x_0)\Big|_{\theta = 0} = 0. \]
This gives $U_{1j} (x_0) = 0$.

Next, at $x_0$ we have
\begin{equation}
\label{a30a}
   \nabla_i (U_{11})  = \nabla_i U_{11}, 
 \end{equation} 
that is 
$ e_i (U_{11}) = \nabla_i U_{11} 
  \equiv \nabla^3 u (e_1, e_1, e_i) + \nabla \chi (e_1, e_1, e_i)$,
and   
\begin{equation}
\label{a40a}     
\nabla_{ij} (U_{11}) = \nabla_{ij} U_{11}.
\end{equation}

One can see \eqref{a30a} immediately if we assume $\Gamma_{ij}^k = 0$ at $x_0$
for all $1 \leq i, j, k \leq n$. In general, we have
\[ \nabla_i (U_{11}) 
                     = \nabla_i U_{11} + 2 \Gamma_{i1}^k U_{1k}
                     = \nabla_i U_{11} + 2 \Gamma_{i1}^1 U_{11} \]
as $U_{1k} (x_0) = 0$. On the other hand, since $e_1, \ldots e_n$ are orthonormal, 
\[ g (\nabla_k e_i,  e_j) + g (e_i, \nabla_k e_j) = 0 \]
and
\[  g (\nabla_i e_1, \nabla_j e_1) + g (e_1, \nabla_i \nabla_j e_1) = 0. \]
Thus  
\begin{equation}
\label{a50a} 
\Gamma_{ki}^j + \Gamma_{kj}^i = 0
\end{equation}
 and
\[ \Gamma_{i1}^k \Gamma_{j1}^k + \nabla_i (\Gamma_{j1}^1) 
    + \Gamma_{j1}^k \Gamma_{ik}^1 = 0. \]
    This gives $\Gamma_{i1}^1 = 0$ and $\nabla_i (\Gamma_{j1}^1) = 0$. 
So we have \eqref{a30a}.

For \eqref{a40a} we calculate directly, 
\[ \begin{aligned}
 \nabla_{ij} (U_{11}) 
   = \,& \nabla_i (\nabla_{j} (U_{11})) - \Gamma_{ij}^k \nabla_k (U_{11}) \\
   = \,& \nabla_i (\nabla_{j} U_{11} + 2 \Gamma_{j1}^k U_{1k}) 
          - \Gamma_{ij}^k \nabla_k U_{11} \\
   = \,& \nabla_{ij} U_{11} + \Gamma_{ij}^k \nabla_k U_{11} 
          + 2 \Gamma_{i1}^k \nabla_{j} U_{1k} + 2 \nabla_i (\Gamma_{j1}^k) U_{1k} \\
     \,& + 2 \Gamma_{j1}^k \nabla_i U_{1k} + 2 \Gamma_{j1}^k \Gamma_{i1}^l U_{lk} 
          + 2 \Gamma_{j1}^k \Gamma_{ik}^l U_{1l}- \Gamma_{ij}^k \nabla_k U_{11} \\
   = \,& \nabla_{ij} U_{11} + 2 \Gamma_{i1}^k \nabla_{j} U_{1k} 
          + 2 \Gamma_{j1}^k \nabla_i U_{1k} + 2 \Gamma_{i1}^k \Gamma_{j1}^k U_{kk}
          - 2  \Gamma_{i1}^k \Gamma_{j1}^k U_{11}
\end{aligned} \] 
by \eqref{a50a} and $\nabla_i (\Gamma_{j1}^1) = 0$. Therefore we have \eqref{a40a} if $\Gamma_{ij}^k = 0$ at $x_0$.

We now continue our proof of \eqref{hess-a10}.
Differentiating equation (\ref{3I-10'}) twice, we obtain at $x_0$,
\begin{equation}
\label{hess-a60}
F^{ij} \nabla_k U_{ij} =  \nabla_k \psi,
\;\;\; \mbox{for all $k$},
\end{equation}
\begin{equation}
\label{hess-a65}
\begin{aligned}
F^{ii} \nabla_{11} U_{ii} + \sum F^{ij, kl} \nabla_1 U_{ij} \nabla_1 U_{kl}
 = \nabla_{11} \psi.
\end{aligned}
\end{equation}
Here and throughout rest of the paper,  $F^{ij} = F^{ij} (\{U_{ij}\})$.
By (\ref{hess-A80}), 
\begin{equation}
\label{hess-a68}
\begin{aligned}
F^{ii} \nabla_{ii} U_{11}
  \geq \,& F^{ii} \nabla_{11} U_{ii}
          + 2 F^{ii} R_{1i1i} (\nabla_{11} u - \nabla_{ii} u)
          - C \sum F^{ii} \\
 \geq \,& F^{ii} \nabla_{11} U_{ii} - C (1 + U_{11}) \sum F^{ii}.
\end{aligned}
\end{equation}
Here we note that $C$ depends on the gradient bound $|\nabla u|_{C^0 (\bM)}$.
 From (\ref{hess-a40}), (\ref{hess-a65}) and (\ref{hess-a68})
we derive
\begin{equation}
\label{hess-a71}
\begin{aligned}
U_{11} F^{ii} \nabla_{ii} \eta
  \leq \,& E - \nabla_{11} \psi +  C (1 + U_{11}) \sum F^{ii}
\end{aligned}
\end{equation}
where
\[ E \equiv  F^{ij, kl} \nabla_1 U_{ij} \nabla_1 U_{kl}
           + \frac{1}{U_{11}} F^{ii} (\nabla_i U_{11})^2. \]

To estimate $E$ we follow the idea of Urbas~\cite{Urbas02}.
Let $0 < s < 1$  (to be chosen) and
\[  \begin{aligned}
J \,& = \{i: U_{ii} \leq - s U_{11}\}, \;\;
K  = \{i> 1: U_{ii} > - s U_{11}\}.
  \end{aligned} \]
It was shown by Andrews~\cite{Andrews94}
and Gerhardt~\cite{Gerhardt96} (see also \cite{Urbas02}) that
\[ - F^{ij, kl} \nabla_1 U_{ij} \nabla_1 U_{kl}
 \geq \sum_{i \neq j} \frac{F^{ii} - F^{jj}}{U_{jj} - U_{ii}}
            (\nabla_1 U_{ij})^2. \]
Therefore,
\begin{equation}
\label{gsz-G270}
\begin{aligned}
 - F^{ij, kl} \nabla_1 U_{ij} \nabla_1 U_{kl}
 \geq \,& 2 \sum_{i \geq 2} \frac{F^{ii} - F^{11}}{U_{11} - U_{ii}}
            (\nabla_1 U_{i1})^2 \\
 \geq \,& 2 \sum_{i \in K} \frac{F^{ii} - F^{11}}{U_{11} - U_{ii}}
            (\nabla_1 U_{i1})^2 \\
 \geq \,& \frac{2}{(1+s) U_{11}} \sum_{i \in K} (F^{ii} - F^{11})
            (\nabla_1 U_{i1})^2 \\
 \geq \,& \frac{2 (1-s)}{(1+s) U_{11}} \sum_{i \in K} (F^{ii} - F^{11})
           [(\nabla_i U_{11})^2 - C/s].
\end{aligned}
\end{equation}
We now fix $s \leq 1/3$ and hence
\[ \frac{2 (1 - s)}{1 + s} \geq 1. \]
From \eqref{gsz-G270} and \eqref{hess-a30} it follows that
\begin{equation}
\label{hess-a271}
\begin{aligned}
 E \leq \,& \frac{1}{U_{11}} \sum_{i \in J} F^{ii} (\nabla_i U_{11})^2
             + \frac{C}{U_{11}} \sum_{i \in K} F^{ii}
             + \frac{C F^{11}}{U_{11}}  \sum_{i \notin J} (\nabla_i U_{11})^2  \\
   \leq \,& U_{11} \sum_{i \in J} F^{ii} (\nabla_i \eta)^2
             + \frac{C}{U_{11}} \sum F^{ii}
             + C U_{11} F^{11} \sum_{i \notin J} (\nabla_i \eta)^2.
\end{aligned}
\end{equation}

Let
\[ \eta = \phi (|\nabla u|^2) + a (\ul{u} - u) \]
where $\phi$ is a positive function, $\phi' > 0$, 
and $a$ is a positive constant. We calculate
\[ \begin{aligned}
 \nabla_i \eta
   = \,& 2 \phi' \nabla_{k} u \nabla_{ik} u + a \nabla_i (\ul{u} - u) \\
   = \,& 2 \phi' (U_{ii} \nabla_i u - \chi_{ik} \nabla_{k} u)
        + a \nabla_i (\ul{u} - u),
 \end{aligned}   \]
\[  \begin{aligned}
  \nabla_{ii} \eta
   = \,&  2 \phi' (\nabla_{ik} u \nabla_{ik} u
          + \nabla_{k} u \nabla_{iik} u)
          + 2 \phi'' (\nabla_{k} u \nabla_{ik} u)^2 
          + a \nabla_{ii} (\ul{u} - u).
\end{aligned} \]
Therefore,
\begin{equation}
\label{hess-a272}
\begin{aligned}
  \sum_{i \in J} F^{ii} (\nabla_i \eta)^2
    \leq \,& 8 (\phi')^2 \sum_{i \in J} F^{ii} (\nabla_{k} u \nabla_{ik} u)^2
          + C a^2 \sum_{i \in J} F^{ii},
\end{aligned}
\end{equation}
\begin{equation}
\label{hess-a272.5}
 \sum_{i  \notin J} (\nabla_i \eta)^2
\leq C (\phi')^2 U_{11}^2 + C (\phi')^2 + C a^2
\end{equation}
and by \eqref{hess-a60},
 \begin{equation}
\label{hess-a273}
 \begin{aligned}
 F^{ii} \nabla_{ii} \eta
\geq \,&  \phi'  F^{ii} U_{ii}^2 + 2 \phi'' F^{ii} (\nabla_{k} u \nabla_{ik} u)^2 \\
       &  + a F^{ii} \nabla_{ii} (\ul{u} - u) - C \phi' \Big(1 + \sum F^{ii}\Big).
\end{aligned}
\end{equation}

Let
$\phi (t) = b (1+t)^2$;
we may assume $\phi'' - 4 (\phi')^2 = 2 b (1 - 8 \phi) \geq 0$ in
any fixed interval $[0, C_1]$ by requiring $b > 0$ sufficiently small.
Combining \eqref{hess-a71}, \eqref{hess-a271}, \eqref{hess-a272},
\eqref{hess-a272.5} and \eqref{hess-a273},
we obtain
 \begin{equation}
\label{hess-a276}
 \begin{aligned}
\phi' F^{ii} U_{ii}^2 +  a F^{ii} \nabla_{ii} (\ul{u} - u)
    \leq \,& C a^2 \sum_{i \in J} F^{ii} + C ((\phi')^2 U_{11}^2 + A^2) F^{11} \\
            & - \frac{\nabla_{11} \psi}{U_{11}} + C \Big(1 +  \sum F^{ii}\Big).
\end{aligned}
\end{equation}

Suppose $U_{11} (x_0) > R$ sufficiently large and apply Theorem~\ref{3I-th3'} to
$A = \{\nabla_{ij} \ul{u} + \chi_{ij}\}$ and $B= \{U_{ij}\}$ at $x_0$.
We see that
\[ F^{ii} \nabla_{ii} (\ul{u} - u)
  = F^{ii} [(\nabla_{ii} \ul{u} + \chi_{ii}) - U_{ii}]
   \geq \theta \Big(1 + \sum F^{ii}\Big). \]
Plug this into \eqref{hess-a276} and fix $a$ sufficiently large;
since $|\nabla_{11} \psi| \leq C U_{11}$ if $\psi = \psi (x, u)$
we derive
\begin{equation}
\label{hess-a276'}
  \phi' F^{ii} U_{ii}^2
    \leq C a^2 \sum_{i \in J} F^{ii} + C ((\phi')^2 U_{11}^2 + a^2) F^{11}.
\end{equation}
Note that
\begin{equation}
\label{hess-a274}
 F^{ii} U_{ii}^2 \geq  F^{11} U_{11}^2 + \sum_{i \in J} F^{ii} U_{ii}^2
   \geq F^{11} U_{11}^2 + s^2 U_{11}^2 \sum_{i \in J} F^{ii}.
 \end{equation}
Fixing $b$ sufficiently small
we obtain from \eqref{hess-a276'} a bound $U_{11} \leq C a/\sqrt{b}$.
This implies \eqref{hess-a10}, and \eqref{hess-a10c} when $M$ is closed.

\bigskip

\section{Boundary estimates}
\label{hess-b}
\setcounter{equation}{0}

\medskip

In this section we establish the boundary estimate \eqref{hess-a10b}
under the assumptions of Theorem~\ref{3I-th4b}. Throughout this section
we assume the function $\varphi \in C^4 (\partial M)$ is extended to
a $C^4$ function on $\bM$, still denoted $\varphi$.

For a point $x_0$ on $\partial M$, we shall choose
smooth orthonormal local frames $e_1, \ldots, e_n$ around $x_0$ such that
when restricted to $\partial M$, $e_n$ is normal to $\partial M$.

Let $\rho (x)$ denote the distance
from $x$ to $x_0$,
\[ \rho (x) \equiv \mbox{dist}_{M^n} (x, x_0), \]
and $M_{\delta} = \{x \in M : \rho (x) < \delta \}$.
Since $\partial M$ is smooth we may assume the distance function
to $\partial M$
\[ d (x) \equiv \mbox{dist} (x, \partial M) \]
is smooth in $M_{\delta_0}$ for fixed $\delta_0 > 0$ sufficiently small
(depending only on the curvature of $M$ and the principal curvatures of
$\partial M$.)
Since $\nabla_{ij} \rho^2 (x_0) = 2 \delta_{ij}$,
we may assume $\rho$ is smooth in $M_{\delta_0}$ and
\begin{equation}
  \{\delta_{ij}\} \leq \{\nabla_{ij} \rho^2\} \leq 3 \{\delta_{ij}\}
\;\;\; \mbox{in} \;\;\; M_{\delta_0}.
\label{cns1-E125}
\end{equation}

The following lemma which crucially depends on Theorem~\ref{3I-th3'}
 plays key roles in our boundary estimates.

\begin{lemma}
\label{ma-lemma-E10}
There exist some uniform positive constants
$t, \delta, \varepsilon$ sufficiently small and $N$ sufficiently large
such that the function
\begin{equation}
\label{ma-E85}
v = (u - \ul{u}) + t d - \frac{N d^2}{2}
\end{equation}
satisfies $v \geq 0$ on $\bar{M_{\delta}}$ and
\begin{equation}
\label{ma-E86}
F^{ij} \nabla_{ij} v \leq - \varepsilon \Big(1 + \sum F^{ii}\Big)
   \;\; \mbox{in $M_{\delta}$}.
\end{equation}
\end{lemma}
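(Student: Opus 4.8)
The plan is to build the function $v$ in two stages, separating the task of getting the differential inequality \eqref{ma-E86} from the task of getting the sign condition $v \ge 0$. First I would analyze the combination $w := (u - \ul u) + t\,d$ on a small half-ball $M_\delta$. Since $\ul u$ is $C^2$ near $\partial M$ and satisfies the strict condition \eqref{3I-11s''}, namely $\lambda[\nabla^2 \ul u + \chi](x) \in \mathcal{C}^+_{\psi(x)}$, I can invoke Theorem~\ref{3I-th3'} with $A = \{\nabla_{ij}\ul u + \chi_{ij}\}$ and $B = \{U_{ij}\}$: either $|\lambda(B)|$ is bounded at $x_0$ — in which case the Hessian of $u$ is already controlled at $x_0$ and there is nothing to prove — or $|\lambda(B)| \ge R_A$ and \eqref{3I-100'} gives
\[
F^{ij}(\nabla_{ij}\ul u + \chi_{ij} - U_{ij}) \ge \theta_A + \varepsilon_0 \sum F^{ii}
\]
for some fixed $\varepsilon_0 > 0$. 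Equivalently $F^{ij}\nabla_{ij}(\ul u - u) \ge \theta_A + \varepsilon_0\sum F^{ii} - F^{ij}\chi_{ij}$, and since $|F^{ij}\chi_{ij}| \le C\sum F^{ii}$ we do \emph{not} immediately get a negative right-hand side; this is why the extra terms $t\,d$ and $-Nd^2/2$ are needed and why \eqref{3I-55} enters.

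The second step is to absorb the bad term $F^{ij}\chi_{ij}$ together with the curvature error from $F^{ij}\nabla_{ij}d$. Using $\sum f_i\lambda_i \ge 0$ from \eqref{3I-55} one knows $F^{ij}U_{ij} \ge 0$, hence $F^{ij}\nabla_{ij}u \ge -F^{ij}\chi_{ij} \ge -C\sum F^{ii}$, which keeps the Laplacian-type term in $F^{ij}\nabla_{ij}(u-\ul u)$ from being too negative in the wrong direction; more to the point, $F^{ij}\nabla_{ij}(-Nd^2/2) = -N F^{ij}\nabla_i d\,\nabla_j d - N d\,F^{ij}\nabla_{ij}d$. The first piece is $-N F^{ij}\nabla_i d\,\nabla_j d$, which is a genuinely negative term since $|\nabla d| = 1$ and $\{F^{ij}\}$ is positive definite; it is bounded above by $-N \min_i f_i \cdot |\nabla d|^2$ and, more usefully, one controls $F^{ij}\nabla_i d\,\nabla_j d$ from below by a positive multiple of $\sum F^{ii}$ unless the eigenvalue configuration is degenerate — and here is where Proposition~\ref{prop5.2} and Lemma~\ref{lem1.10} do the real work: they guarantee that $\sum_{i\ne r} f_i\lambda_i^2 \ge \tfrac1n\sum f_i\lambda_i^2$ when some $\lambda_r < 0$, so that one can always extract a term comparable to $\varepsilon\sum F^{ii}$ after choosing the free parameters in the right order. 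Concretely: fix $\varepsilon_0$ from Theorem~\ref{3I-th3'}, then choose $\delta$ small so the $d\,F^{ij}\nabla_{ij}d$ term and the curvature errors in $F^{ij}\nabla_{ij}(u-\ul u)$ are each $\le \tfrac{\varepsilon_0}{8}(1+\sum F^{ii})$ on $M_\delta$, then choose $t$ small, then choose $N$ large to dominate what remains, arriving at \eqref{ma-E86} with $\varepsilon = \varepsilon_0/2$, say. Corollary~\ref{cor1.10} is the tool for absorbing any residual $\sum f_i|\lambda_i|$-type terms into $\varepsilon\sum_{i\ne r}f_i\lambda_i^2 + C(1+\tfrac1\varepsilon\sum f_i)$.

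Third, the sign condition $v \ge 0$ on $\overline{M_\delta}$. On $\partial M \cap \overline{M_\delta}$ we have $u - \ul u = \varphi - \varphi = 0$ and $d = 0$, so $v = 0$ there. On the curved part $\partial M_\delta \cap M$ one has $\rho = \delta$ and I would use $u - \ul u \ge -C\rho = -C\delta$ (Lipschitz bound from $|u|_{C^1}, |\ul u|_{C^1}$, since both agree on $\partial M$) together with $t\,d - Nd^2/2 \ge -Nd^2/2 \ge -N\delta^2/2$, which is not obviously enough — so instead I'd observe that since $\ul u$ is a subsolution and $u$ a solution, a barrier/maximum-principle argument or simply the explicit structure $v = (u - \ul u) + d(t - Nd/2)$ with $t - Nd/2 \ge t - N\delta/2 \ge 0$ once $\delta \le t/N$ reduces matters to $u - \ul u \ge 0$ near $\partial M$. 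That last inequality $u \ge \ul u$ near $\partial M$ follows from the comparison principle for the elliptic operator $F$ (concavity \eqref{3I-30} plus ellipticity \eqref{3I-20}): $F(\lambda[\nabla^2\ul u + \chi]) \ge \psi = F(\lambda[\nabla^2 u + \chi])$ in $M$ with $\ul u = u$ on $\partial M$ forces $\ul u \le u$; alternatively one shrinks $\delta$ so that $v \ge 0$ follows termwise.

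The main obstacle is the second step: extracting a definite negative quantity $-\varepsilon(1+\sum F^{ii})$ rather than merely $-\varepsilon\sum F^{ii}$ or something that could vanish when all $f_i\lambda_i^2$ concentrate on one index. This is precisely the scenario Proposition~\ref{prop5.2} (existence of an index $r$ with $\sum_{l<n}F^{ij}A_{il}A_{lj} \ge c_0\sum_{i\ne r}f_i\lambda_i^2$) and Lemma~\ref{lem1.10} are designed to defeat, and the delicate point is that the choice of parameters $\varepsilon, t, \delta, N$ must be made in a consistent order so that no circularity arises — $\varepsilon_0$ comes from Theorem~\ref{3I-th3'} applied to $\ul u$, and everything else is chosen downstream of it. Care is also needed because $\{F^{ij}\}$ need not be diagonal at a general boundary point; one uses the simultaneous-diagonalization device from the proof of Proposition~\ref{prop5.2} to reduce to the diagonal case before applying Lemma~\ref{lem1.10} and Corollary~\ref{cor1.10}.
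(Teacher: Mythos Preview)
Your plan contains a computational error that sends the whole argument down the wrong path. When you apply Theorem~\ref{3I-th3'} with $A = \{\nabla_{ij}\ul u + \chi_{ij}\}$ and $B = \{U_{ij}\} = \{\nabla_{ij}u + \chi_{ij}\}$, the difference is simply $A_{ij} - B_{ij} = \nabla_{ij}(\ul u - u)$; the $\chi$ terms cancel. So \eqref{3I-100'} reads
\[
F^{ij}\nabla_{ij}(\ul u - u) \ge \theta_A + \varepsilon_0 \sum F^{ii},
\]
with no residual $F^{ij}\chi_{ij}$ to absorb. This one inequality already delivers both the constant piece and the $\sum F^{ii}$ piece needed for \eqref{ma-E86} in the large-$|\lambda|$ regime. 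Consequently condition~\eqref{3I-55}, Proposition~\ref{prop5.2}, Lemma~\ref{lem1.10}, and Corollary~\ref{cor1.10} play no role in this lemma; they enter later in Section~\ref{hess-b} when estimating $F^{ij}\nabla_{ij}\varPsi$, not here.

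You also misread the purpose of the term $-NF^{ij}\nabla_i d\,\nabla_j d$. It cannot in general be bounded below by a multiple of $\sum F^{ii}$, since it is controlled only by the \emph{smallest} eigenvalue of $\{F^{ij}\}$. The paper's actual mechanism is a clean two-case split at each point of $M_\delta$: if $|\lambda| \le R$ then $\{F^{ij}\}$ is uniformly elliptic, so $F^{ij}\nabla_i d\,\nabla_j d \ge c_1 > 0$ and $\sum F^{ii} \le C_1$, and taking $N$ large makes $-Nc_1$ dominate everything; if $|\lambda| > R$ then the displayed inequality from Theorem~\ref{3I-th3'} above does all the work, and one merely takes $t$ and $\delta$ small so that $C_1(t + N\delta)\sum F^{ii}$ is absorbed. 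Your remark that when $|\lambda|$ is bounded ``there is nothing to prove'' conflates this lemma with the eventual boundary Hessian bound; the lemma asserts a differential inequality at \emph{every} point of $M_\delta$, not a pointwise Hessian estimate. The sign condition $v \ge 0$ you handle correctly: $u \ge \ul u$ by comparison, and $d(t - Nd/2) \ge 0$ once $\delta \le 2t/N$.
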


\begin{proof}
We note that to ensure $v \geq 0$ in $\bar{M_{\delta}}$
we may require $\delta \leq 2t/N$ after $t, N$ being fixed.
Obviously,
\begin{equation}
\label{eq-110}
\begin{aligned}
F^{ij} \nabla_{ij} v
   = \,& F^{ij} \nabla_{ij} (u - \ul{u}) + (t - N d)  F^{ij} \nabla_{ij} d
     - N  F^{ij} \nabla_{i} d \nabla_{j} d \\
\leq \,& C_1 (t+N d) \sum F^{ii} + F^{ij}  \nabla_{ij} (u - \ul{u})
        - N  F^{ij} \nabla_{i} d \nabla_{j} d.
\end{aligned}
\end{equation}

Fix $\varepsilon > 0$ sufficiently small and $R \geq R_A$ so that
Theorem~\ref{3I-th3'} holds for $A = \{\nabla_{ij} \ul{u} + \chi_{ij}\}$
and $B = \{U_{ij}\}$ at every point in $\bar{M}_{\delta_0}$.
Let $\lambda = \lambda [\{U_{ij}\}]$ be the eigenvalues of $\{U_{ij}\}$.
 At a fixed point in $M_{\delta}$ we consider two cases:
(a) $|\lambda| \leq R$; and (b) $|\lambda| > R$.

In case (a) there are uniform bounds (depending on $R$)
\[ 0 < c_1 \leq \{F^{ij}\} \leq C_1 \]
and therefore $F^{ij} \nabla_{i} d \nabla_{j} d \geq c_1$
since $|\nabla d| \equiv 1$. We may fix $N$ large enough
so that \eqref{ma-E86} holds for any $t, \varepsilon \in (0,1]$,
as long as $\delta$ is sufficiently small.

In case (b) by Theorem~\ref{3I-th3'} and \eqref{eq-110} we may further
require $t$ and $\delta$ so that \eqref{ma-E86} holds for some
different (smaller) $\varepsilon > 0$.
\end{proof}

We now start the proof of \eqref{hess-a10b}.
Consider a point $x_0 \in \partial M$.
Since $u - \ul{u} = 0$ on $\partial M$ we have
\begin{equation}
\label{hess-a200}
\nabla_{\alpha \beta} (u - \ul{u})
 = -  \nabla_n (u - \ul{u}) \varPi (e_{\alpha}, e_{\beta}), \;\;
\forall \; 1 \leq \alpha, \beta < n \;\;
\mbox{on  $\partial M$}
\end{equation}
where 
$\varPi$ denotes the second fundamental form of $\partial M$.
Therefore,
\begin{equation}
\label{hess-E130}
|\nabla_{\alpha \beta} u| \leq  C,  \;\; \forall \; 1 \leq \alpha, \beta < n
\;\;\mbox{on} \;\; \partial M.
\end{equation}

To estimate the mixed tangential-normal and pure normal second
derivatives we note the following formula
\[ \nabla_{ij} (\nabla_{k} u)
  = \nabla_{ijk} u + \Gamma_{ik}^l \nabla_{jl} u + \Gamma_{jk}^l \nabla_{il} u
  +  \nabla_{\nabla_{ij} e_k} u. \]
By \eqref{hess-a60}, therefore,
\begin{equation}
\label{hess-E170}
\begin{aligned}
|F^{ij} \nabla_{ij} \nabla_k (u - \varphi)|
\leq \,& 2 F^{ij}  \Gamma_{ik}^l \nabla_{jl} u  + C \Big(1 + \sum F^{ii}\Big) \\
\leq \,& C \Big(1 + \sum f_i |\lambda_i| + \sum f_i\Big).
\end{aligned}
\end{equation}

Let
\begin{equation}
\label{hess-E176}
 \varPsi
   = A_1 v + A_2 \rho^2 - A_3 \sum_{\beta < n} |\nabla_{\beta} (u - \varphi)|^2.
\end{equation}
By \eqref{hess-E170} we have
\begin{equation}
\begin{aligned}
 F^{ij} \nabla_{ij} |\nabla_{\beta} (u - \varphi)|^2
   = \,& 2  F^{ij} \nabla_{\beta} (u - \varphi)
           \nabla_{ij} \nabla_{\beta} (u - \varphi) \\
     \,& + 2  F^{ij} \nabla_i \nabla_{\beta} (u - \varphi)
               \nabla_j \nabla_{\beta} (u - \varphi)  \\
\geq \,& F^{ij} U_{i \beta} U_{j \beta}
         - C \Big(1 + \sum f_i |\lambda_i| + \sum f_i\Big).
\end{aligned}
\end{equation}

For fixed $ 1 \leq \alpha < n$, by Lemma~\ref{ma-lemma-E10},
Proposition~\ref{prop5.2}
and Corollary~\ref{cor1.10} we see that
\begin{equation}
\label{hess-E170'}
F^{ij} \nabla_{ij} (\varPsi \pm \nabla_{\alpha} (u - \varphi)) \leq 0, \;\;
\forall \; \;\; \mbox{in $M_{\delta}$}
\end{equation}
and $\varPsi \pm \nabla_{\alpha} (u - \varphi) \geq 0$ on $\partial M_{\delta}$
when $A \gg A_2 \gg A_3 \gg 1$. By the maximum principle we derive
$\varPsi \pm \nabla_{\alpha} (u - \varphi) \geq 0$
in $M_{\delta}$ and therefore
\begin{equation}
\label{hess-E130'}
|\nabla_{n\alpha} u (x_0)| \leq \nabla_n \varPsi (x_0) \leq C,
\;\; \forall \; \alpha < n.
\end{equation}

It remains to derive
\begin{equation}
\label{cma-200}
 \nabla_{n n} u (x_0) \leq C.
\end{equation}
Following an idea of Trudinger~\cite{Trudinger95}
we show that there are uniform constants $c_0, R_0$
such that for all $R > R_0$, $(\lambda' [\{U_{\alpha \beta} (x_0)\}], R) \in \Gamma$ and
\[ f (\lambda' [\{U_{\alpha \beta} (x_0)\}], R) \geq \psi (x_0) + c_0 \]
where
$\lambda' [\{U_{\alpha \beta}\}] = (\lambda'_1, \cdots, \lambda'_{n-1})$
denotes the eigenvalues of the $(n-1) \times (n-1)$ matrix
$\{U_{\alpha \beta}\}$ ($1 \leq \alpha, \beta \leq n-1$).
Suppose we have found such $c_0$ and $R_0$.
By Lemma 1.2 of \cite{CNS3}, from estimates \eqref{hess-E130} and
\eqref{hess-E130'} we can find $R_1 \geq R_0$ such
that if $U_{nn} (x_0) > R_1$,
\[ f (\lambda [\{U_{ij} (x_0)\}])
    \geq f (\lambda' [\{U_{\alpha \beta} (x_0)\}], U_{nn}(x_0)) - \frac{c_0}{2}. \]
By equation~\eqref{3I-10} this gives a desired bound $U_{nn} (x_0) \leq R_1$
for otherwise, we would have
\[ f (\lambda [\{U_{ij} (x_0)\}]) \geq \psi (x_0) + \frac{c_0}{2}. \]

For $R > 0$ and a symmetric $(n-1)^2$
matrix $\{r_{\alpha  {\beta}}\}$ with $(\lambda' [\{r_{\alpha \beta} (x_0)\}], R) \in \Gamma$ , define
\[ \tF [r_{\alpha \beta}]
   \equiv f (\lambda' [\{r_{\alpha \beta}\}], R) \]
and consider
\[ m_R \equiv \min_{x_0 \in \partial M} 
         \tF [U_{\alpha \beta} (x_0)] - \psi (x_0). \]
Note that  $\tF$ is concave and $m_R$ is increasing in $R$ by \eqref{3I-20},
and that
\[  c_R \equiv \inf_{\partial M}
  (\tF[\ul{U}_{\alpha {\beta}}] - F [\ul{U}_{ij}])
      \geq \inf_{\partial M} (\tF[\ul{U}_{\alpha {\beta}}] -
            f (\lambda' [\ul{U}_{\alpha {\beta}}], \ul{U}_{nn})) > 0 \]
when $R$ is sufficiently large.

We wish to show $m_R > 0$ for $R$ sufficiently large.
Suppose $m_R$ is achieved at a point $x_0 \in \partial M$.
Choose local orthonormal frames around $x_0$ as before and let
\[ \tF^{\alpha {\beta}}_0
 = \frac{\partial \tF}{\partial r_{\alpha {\beta}}}
        [U_{\alpha {\beta}} (x_0)]. \]
Since $\tF$ is concave,
for any symmetric matrix $\{r_{\alpha  \beta}\}$ with
$(\lambda' [\{r_{\alpha  \beta}\}], R) \in \Gamma$,
\begin{equation}
\label{c-200}
\tF^{\alpha {\beta}}_0 (r_{\alpha  {\beta}} - U_{\alpha {\beta}} (x_0))
    \geq \tF [r_{\alpha  {\beta}}] - \tF [U_{\alpha  {\beta}} (x_0)].
 \end{equation}
In particular,
\begin{equation}
\label{c-210}
\tF^{\alpha {\beta}}_0 U_{\alpha  {\beta}} - \psi
- \tF^{\alpha {\beta}}_0 U_{\alpha  {\beta}} (x_0) + \psi (x_0)
\geq \tF [U_{\alpha  {\beta}}] - \psi - m_0 \geq 0 \;\;
\mbox{on $\partial M$}.
\end{equation}

By \eqref{hess-a200} we have on $\partial M$,
\begin{equation}
\label{c-220}
 U_{\alpha {\beta}} = \ul{U}_{\alpha {\beta}}
    - \nabla_n (u - \ul{u}) \sigma_{\alpha {\beta}}
\end{equation}
where
$\sigma_{\alpha {\beta}} = \langle \nabla_{\alpha} e_{\beta}, e_n \rangle$;
note that
$\sigma_{\alpha \beta} = \varPi (e_\alpha, e_\beta)$ on
$\partial M$. 
It follows that
\[ \begin{aligned}
 \nabla_n (u - \ul{u}) \tF^{\alpha {\beta}}_0 \sigma_{\alpha {\beta}} (x_0)
   = \,& \tF^{\alpha {\beta}}_0 ( \ul{U}_{\alpha {\beta}} (x_0) -
         U_{\alpha  {\beta}} (x_0)) \\
\geq \,& \tF[\ul{U}_{\alpha {\beta}} (x_0)] - \tF[U_{\alpha {\beta}} (x_0)] \\
  =  \,& \tF[\ul{U}_{\alpha {\beta}} (x_0)] - \psi(x_0) - m_R \geq c_R - m_R.
\end{aligned} \]
Consequently, if
\[ \nabla_n (u - \ul{u}) (x_0)
    \tF^{\alpha {\beta}}_0 \sigma_{\alpha {\beta}} (x_0)
   \leq c_R/2 \]
then $m_R \geq c_R/2$ and we are done.

Suppose now that
\[ \nabla_n (u - \ul{u}) (x_0) \tF^{\alpha {\beta}}_0 \sigma_{\alpha {\beta}} (x_0)
    > \frac{c_R}{2} \]
and let $\eta \equiv \tF^{\alpha {\beta}}_0 \sigma_{\alpha {\beta}}$. Note that
\begin{equation}
\label{c-230}
\eta (x_0) \geq c_R/2 \nabla_n (u - \ul{u}) (x_0) \geq 2 \epsilon_1 c_R
\end{equation}
for some uniform $\epsilon_1 > 0$ independent of $R$.
We may assume $\eta \geq \epsilon_1 c_R$ on $\bar{M_{\delta}}$
by requiring $\delta$ small.
Define in $M_{\delta}$,
\[ \begin{aligned}
\varPhi
     = \,& - \nabla_n (u - \varphi)
        + \frac{1}{\eta} \tF^{\alpha {\beta}}_0
           (\nabla_{\alpha {\beta}} \varphi + \chi_{\alpha {\beta}}
          - U_{\alpha {\beta}} (x_0)) - \frac{\psi - \psi (x_0)}{\eta}  \\
\equiv \,& - \nabla_n (u - \varphi) + Q.
\end{aligned} \]
We have $\varPhi (0) = 0$ and
$\varPhi \geq 0$ on $\partial M$ near $0$ by \eqref{c-210} since
\[  \nabla_{\alpha {\beta}} u  = \nabla_{\alpha {\beta}} \varphi -
    \nabla_n (u - \varphi) \sigma_{\alpha {\beta}} \;\;
\mbox{on $\partial M$}, \]
while by \eqref{hess-E170},
\begin{equation}
\label{gblq-B360}
 \begin{aligned}
F^{ij} \nabla_{ij} \varPhi
  \leq \, & - F^{ij} \nabla_{ij} \nabla_n u + C \sum F^{ii}
  \leq   C \Big(1 + \sum f_i |\lambda_i| + \sum f_i\Big).
\end{aligned}
\end{equation}

Consider the function $\varPsi$ defined in \eqref{hess-E176}.
Applying Lemma~\ref{ma-lemma-E10}, Proposition~\ref{prop5.2} and
Corollary~\ref{cor1.10} as before
 for $A_1 \gg A_2 \gg A_3 \gg 1$ we derive
$\varPsi + \varPhi \geq 0$ on $\partial M_{\delta}$ and
\begin{equation}
\label{cma-106}
F^{ij} \nabla_{ij} (\varPsi + \varPhi) \leq  0 \;\; \mbox{in $M_{\delta}$}.
\end{equation}
By the maximum principle,
$\varPsi +  \varPhi \geq 0$ in $M_{\delta}$. Thus
$\varPhi_n (x_0) \geq - \nabla_n \varPsi (x_0) \geq -C$.
This gives
$ \nabla_{nn} u (x_0) \leq C$. 

So we have an {\em a priori}
upper bound for all eigenvalues of $\{U_{ij} (x_0)\}$.
Consequently, $\lambda [\{U_{ij} (x_0)\}]$ is contained in a
compact subset of $\Gamma$ by \eqref{3I-40}, and therefore
\[ m_R = \tF [U_{\alpha  {\beta}} (x_0)] - \psi (x_0) > 0 \]
when $R$ is sufficiently large.
This completes the proof of \eqref{hess-a10b}.

\bigskip

\section{Further results and remarks}
\label{3I-R}
\setcounter{equation}{0}

\medskip

\subsection{The results of Li \cite{LiYY90} and Urbas~\cite{Urbas02}}

In \cite{LiYY90} Li treated equation~\eqref{3I-10} with $\chi = g$ on closed
manifolds with nonnegative sectional curvature, and in various other situations.
His basic assumptions used in the second derivative estimates include
\eqref{3I-20}, \eqref{3I-30}, \eqref{3I-40'} as well as the following:
\begin{equation}
\label{3I-R10}
L_0 :=\lim_{\lambda \rightarrow 0, \lambda \in \Gamma} \inf f (\lambda) > - \infty,
\end{equation}
and
\begin{equation}
\label{3I-R20}
 \lim_{|\lambda| \rightarrow + \infty, \lambda \in \partial \Gamma^{\sigma}}
   \sum f_i (\lambda) = + \infty, \;\; \forall \, \sigma > \sup_{\partial \Gamma} f.
\end{equation}
Li also derived the gradient estimates under the same assumptions.

Urbas~\cite{Urbas02} was able remove the nonnegative curvature condition in \cite{LiYY90},
and showed that assumption~\eqref{3I-R20} could be replaced by
\begin{equation}
\label{3I-R20'}
   \sum f_i (\lambda) \geq \delta_{\sigma},
   \;\; \forall \, \lambda \in \partial \Gamma^{\sigma}, \;
   \sigma > \sup_{\partial \Gamma} f,
\end{equation}
and
\begin{equation}
\label{3I-R30}
 \lim_{|\lambda| \rightarrow + \infty, \lambda \in \partial \Gamma^{\sigma}}
   \sum f_i (\lambda) \lambda_i^2 = + \infty,
   \;\; \forall \, \sigma > \sup_{\partial \Gamma} f.
\end{equation}
The main assumption in ~\cite{Urbas02} for the gradient estimates is
\eqref{3I-R40}
which was also used in earlier papers for gradient estimates
\cite{Korevaar87}, \cite{LiYY91}, \cite{Trudinger90}, \cite{GS91}, \cite{CW01}.

The following lemma clarifies relations between assumptions \eqref{3I-R10},
\eqref{3I-R20} and \eqref{3I-200}.

\begin{lemma}
\label{3I-lemma-R10}
Suppose $f$ satisfies \eqref{3I-20}, \eqref{3I-30}, \eqref{3I-R10} and
\eqref{3I-R20}.
Then $\Gamma_n^+ \subset \mathcal{C}_{\sigma}^+$ for any
$\sigma > \sup_{\partial \Gamma} f$.
Consequently, condition~\eqref{3I-200} is satisfied if $\chi > 0$.
\end{lemma}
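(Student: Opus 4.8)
The whole statement rests on one elementary monotonicity fact: since $f$ is increasing in each variable, the superlevel set $\Gamma^{\sigma}$ is stable under adding any vector with nonnegative entries, so the closed positive orthant must lie inside the tangent cone at infinity of $\overline{\Gamma^{\sigma}}$; because $\Gamma_n^+$ is open it then lies \emph{strictly} inside, i.e.\ in $\mathcal{C}_{\sigma}^+$. I expect the hypotheses \eqref{3I-R10}, \eqref{3I-R20} to enter only to guarantee that $\mathcal{C}_{\sigma}$, hence $\mathcal{C}_{\sigma}^+$, is a genuine well-defined object (they are Li's list of assumptions); the inclusion itself should use only \eqref{3I-20} and \eqref{3I-30}.

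First I would fix $\sigma>\sup_{\partial\Gamma}f$ with $\Gamma^{\sigma}\neq\emptyset$ and prove $\Gamma^{\sigma}+\overline{\Gamma_n^+}\subseteq\Gamma^{\sigma}$. For $\mu_0\in\Gamma^{\sigma}$ and $\mu\in\overline{\Gamma_n^+}$, the ray $\mu_0+t\mu$ stays in $\Gamma$ for all $t\geq 0$ (since $\Gamma$ is a convex cone containing $\Gamma_n^+$), and $\tfrac{d}{dt}f(\mu_0+t\mu)=\sum_i f_i(\mu_0+t\mu)\,\mu_i\geq 0$ by \eqref{3I-20}, so $f(\mu_0+t\mu)\geq f(\mu_0)>\sigma$; thus the whole ray lies in $\Gamma^{\sigma}$. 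Hence every $v\in\overline{\Gamma_n^+}$ is a recession direction of the closed convex set $\overline{\Gamma^{\sigma}}$ (convex by \eqref{3I-30}), so the recession cone $\widehat{C}$ of $\overline{\Gamma^{\sigma}}$, whose boundary is $\mathcal{C}_{\sigma}$, is $n$-dimensional and contains $\overline{\Gamma_n^+}$. Since $\Gamma_n^+$ is open, $\Gamma_n^+\subseteq\mathrm{int}\,\widehat{C}$, and in particular $\Gamma_n^+\cap\mathcal{C}_{\sigma}=\emptyset$.

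Next I would observe that $\Gamma_n^+\cap\Gamma^{\sigma}\neq\emptyset$: picking any $\lambda^{*}\in\Gamma^{\sigma}$, the points $\lambda^{*}+t\mathbf{1}$ lie in $\Gamma^{\sigma}$ (by the first step) and in $\Gamma_n^+$ once $t$ is large. Therefore $\Gamma_n^+$ is connected, contained in $\Gamma\setminus(\mathcal{C}_{\sigma}\cap\Gamma)$, and meets $\Gamma^{\sigma}\subseteq\mathcal{C}_{\sigma}^+$; a connected set disjoint from $\mathcal{C}_{\sigma}$ and meeting the component $\mathcal{C}_{\sigma}^+$ is contained in it, so $\Gamma_n^+\subseteq\mathcal{C}_{\sigma}^+$ for every such $\sigma$. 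Finally, if $\chi>0$ the constant function $\ul{u}\equiv 0$ satisfies $\nabla^2\ul{u}+\chi=\chi>0$, hence $\lambda[\nabla^2\ul{u}+\chi](x)\in\Gamma_n^+\subseteq\mathcal{C}^+_{\hat{\psi}(x)}$ for all $x\in\bM$ — note $\hat{\psi}(x)\geq\inf_{\bM}\ul{\psi}>\sup_{\partial\Gamma}f$ by \eqref{3I-40'}, so $\mathcal{C}^+_{\hat{\psi}(x)}$ is defined — which is precisely \eqref{3I-200}.

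The analytic input here is trivial; the only genuine work is the bookkeeping around $\mathcal{C}_{\sigma}$, namely matching the paper's ``tangent cone at infinity'' with the recession cone of $\overline{\Gamma^{\sigma}}$ and confirming that $\mathcal{C}_{\sigma}^+$ is indeed the interior component lying on the $\Gamma^{\sigma}$ side. I would expect that identification, rather than any estimate, to be the step requiring the most care; once it is in place the argument is as above.
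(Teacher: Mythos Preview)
Your instinct that the identification of $\mathcal{C}_\sigma$ with the recession cone is the delicate step is correct, but this is precisely where the argument breaks, and the failure is not mere bookkeeping. The tangent cone at infinity $\mathcal{C}_\sigma$ is \emph{not} the boundary of the recession cone of $\overline{\Gamma^\sigma}$ placed at the origin; it is the boundary of a specific \emph{translate} of that cone, positioned so that its faces are asymptotic to $\partial\Gamma^\sigma$. (Remark~\ref{3I-remark20} explicitly singles out ``the vertex of $\mathcal{C}_\sigma$ is the origin'' as a special hypothesis, confirming that in general the vertex lies elsewhere.) Your argument correctly shows that $\Gamma_n^+$ lies in the interior of the recession cone at the origin, but this says nothing about disjointness from the translated cone $\mathcal{C}_\sigma$.

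In fact the conclusion is \emph{false} under \eqref{3I-20}--\eqref{3I-30} alone. Take $n=2$, $\Gamma=\Gamma_1$, and $f(\lambda)=-\log(e^{-\lambda_1}+e^{-\lambda_2})$. This $f$ is smooth, symmetric, satisfies \eqref{3I-20} and \eqref{3I-30}, and even \eqref{3I-R10} (with $L_0=-\log 2$), but $\sum f_i\equiv 1$ so \eqref{3I-R20} fails. The level curve $\partial\Gamma^\sigma$ has asymptotes $\lambda_1=\sigma$ and $\lambda_2=\sigma$, so $\mathcal{C}_\sigma^+=\{\lambda_1>\sigma,\ \lambda_2>\sigma\}$ with vertex at $(\sigma,\sigma)$. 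For any $\sigma>0$ this strictly excludes part of $\Gamma_2^+$, so $\Gamma_n^+\not\subset\mathcal{C}_\sigma^+$.

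The paper's proof uses both extra hypotheses essentially. Concavity together with \eqref{3I-R10} yields the upper bound $\sum f_i(\lambda)\lambda_i\le f(\lambda)-L_0$; then for $\mu\in\Gamma_n^+$ and $\lambda\in\partial\Gamma^\sigma$ one has
\[
\sum f_i(\lambda)(\mu_i-\lambda_i)\ \ge\ \mu_n\sum f_i(\lambda)+L_0-\sigma,
\]
which \eqref{3I-R20} forces to be positive for $|\lambda|$ large. This places $\mu$ strictly on the $\Gamma^\sigma$-side of every far-away tangent hyperplane of $\partial\Gamma^\sigma$, hence strictly inside the tangent cone at infinity. Neither hypothesis can be dropped.
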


\begin{proof}
Let $\lambda \in \Gamma$. By the concavity of $f$,
\[ \sum f_{\lambda_i} (\lambda) (\delta - \lambda_i)
       \geq f (\delta {\bf 1}) - f (\lambda) \]
for any $\delta > 0$. Letting $\delta$ tend to $0$, we obtain by \eqref{3I-R10},
\begin{equation}
\label{3I-R50}
\sum f_{\lambda_i} (\lambda) \lambda_i \leq  f (\lambda) - L_0.
\end{equation}
Let $\mu 
\in \Gamma_n^+$ and assume
$\mu_1 \geq \cdots \geq \mu_n > 0$. Then for $\lambda \in \Gamma^{\sigma}$
\[ \sum f_{\lambda_i} (\lambda) (\mu_i - \lambda_i)
     \geq \mu_n \sum f_{\lambda_i} (\lambda) - \sum f_{\lambda} (\lambda) \lambda_i
     \geq \mu_n \sum f_{\lambda_i} (\lambda) + L_0 - \sigma > 0 \]
by \eqref{3I-R20} when $|\lambda|$ is sufficiently large. This clearly implies
$\mu \in \mathcal{C}_{\sigma}^+$.
\end{proof}

Concerning condition~\eqref {3I-R30} we have the following observation.

\begin{proposition}
\label{3I-propositio-R10}
Theorem~\ref{3I-th4b} still holds with
assumption~\eqref{3I-11s''} replaced by \eqref {3I-R30}, and therefore so does
Theorem~\ref{3I-thm2}.
\end{proposition}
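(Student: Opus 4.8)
The plan is to trace the proof of Theorem~\ref{3I-th4b} in Section~\ref{hess-b}, isolate the one step that uses~\eqref{3I-11s''}, and replace it by an argument based on~\eqref{3I-R30}. Inspection shows that~\eqref{3I-11s''} is invoked \emph{only} inside Lemma~\ref{ma-lemma-E10}, where Theorem~\ref{3I-th3'} is applied with $A=\{\nabla_{ij}\ul{u}+\chi_{ij}\}$ and $B=\{U_{ij}\}$ to yield, in case (b) ($|\lambda[U]|>R$), the inequality $F^{ij}\nabla_{ij}(u-\ul{u})\le-\theta(1+\sum F^{ii})$, which is exactly what absorbs the term $C_1(t+Nd)\sum F^{ii}$ in~\eqref{eq-110} and produces~\eqref{ma-E86}. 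Everything after Lemma~\ref{ma-lemma-E10} in Section~\ref{hess-b} --- the barriers~\eqref{hess-E176}, \eqref{hess-E170'}, \eqref{cma-106} and the Trudinger-type argument for $\nabla_{nn}u$ --- uses only~\eqref{ma-E86}, Proposition~\ref{prop5.2}, Corollary~\ref{cor1.10} and the first order bounds. So it is enough to run the boundary estimate under~\eqref{3I-R30} in place of~\eqref{3I-11s''}.

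For the subsolution term I would use concavity alone: since $\psi=\psi(x)$, the comparison principle (valid by~\eqref{3I-20}, \eqref{3I-30}) gives $\ul{u}\le u$ on $\bM$, and at each point where $\ul{u}$ is $C^2$, concavity of $F$ gives $F^{ij}\nabla_{ij}(u-\ul{u})=F^{ij}(U_{ij}-\ul{U}_{ij})\le F(\{U_{ij}\})-F(\{\ul{U}_{ij}\})=\psi-f(\lambda[\ul{U}])\le0$, where $\ul{U}_{ij}:=\nabla_{ij}\ul{u}+\chi_{ij}$; replacing $\ul{u}$ by $\ul{u}+\varepsilon_0(\rho^2-\delta^2)$ with $\varepsilon_0$ small (which by~\eqref{3I-20} only raises $\lambda[\ul{U}]$, hence $f$, and keeps $\ul{u}\le\varphi$ on the boundary collar) one may take this to be $\le-\delta_0<0$. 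I would then keep the two-case split of Lemma~\ref{ma-lemma-E10}: for $|\lambda|\le R$ the $f_i$ are comparable, $\sum F^{ii}$ is bounded, and nothing changes; for $|\lambda|>R$ I would not reprove~\eqref{ma-E86} but instead add to the barrier $\varPsi$ in~\eqref{hess-E176} a controlled normal-direction term (a suitable variant of $-A_3(\nabla_n(u-\varphi)-\nabla_n(u-\varphi)(x_0))^2$, still $\ge0$ on $\partial M_\delta$ because it vanishes quadratically in $\rho$ on $\partial M$ once $A_2\gg A_3$), whose Hessian, combined with the existing tangential term via Proposition~\ref{prop5.2}, supplies a negative quantity comparable to $-A_3\sum f_i\lambda_i^2$; by~\eqref{3I-R30} this blows up as $|\lambda|\to\infty$, uniformly for $\sigma\in[\inf_M\psi,\sup_M\psi]$, and, combined with Lemma~\ref{lem1.10} and Corollary~\ref{cor1.10} as in the original proof, it dominates all remaining terms of order $\sum F^{ii}$, the lower order terms being absorbed as before. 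One then concludes exactly as in Section~\ref{hess-b}.

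The main obstacle is precisely this last estimate when $|\lambda|>R$: without~\eqref{3I-11s''} the function $v$ contributes only a bounded negative amount, so the (a priori unbounded) quantity $\sum F^{ii}$ must be controlled entirely through $\sum f_i\lambda_i^2$ and~\eqref{3I-R30}; this forces a delicate ordering of the constants $A_1,A_2,A_3,t,\delta$ and of $R$ (to be fixed last), together with a separate treatment of the configurations in which the large eigenvalues of $\{U_{ij}\}$ are nearly aligned with the normal $e_n$ --- which is the whole reason a normal-direction term must be adjoined to the tangential barrier of Section~\ref{hess-b}, so that the negative contribution becomes $\sum f_i\lambda_i^2$ (basis independent) rather than merely its tangential part. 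Finally, ``therefore so does Theorem~\ref{3I-thm2}'': the only other ingredient of Theorem~\ref{3I-thm2} is the global estimate~\eqref{hess-a10}, and the same substitution --- $F^{ij}\nabla_{ij}(u-\ul{u})\le0$ by concavity, together with~\eqref{3I-R30} in place of Theorem~\ref{3I-th3'} --- goes through in the proof of Section~\ref{hess-g} (this is of the same nature as the estimates of Urbas~\cite{Urbas02}); with the interior and boundary second order estimates both in hand, the continuity-method proof of Theorem~\ref{3I-thm2} is unchanged.
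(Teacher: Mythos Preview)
Your identification of where \eqref{3I-11s''} enters is correct, and your overall strategy is reasonable, but you miss the single inequality that drives the paper's argument, and your proposed workaround has a genuine gap. The paper's proof is considerably simpler than what you outline: it does not add any normal-direction term. It simply replaces $v$ by $(u-\ul{u})$ in $\varPsi$, calls the result $\tilde{\varPsi}$, and uses the sharper concavity estimate
\[
F^{ij}\nabla_{ij}(\ul{u}-u)\ \ge\ \varepsilon\sum F^{ii}-C,
\]
obtained by applying the concavity of $F$ with $\ul{U}-\varepsilon I$ (still admissible for small $\varepsilon>0$, since $\ul{u}\in C^2(\bM)$ is admissible) in place of $\ul{U}$. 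Multiplied by $A_1$, this $\varepsilon\sum F^{ii}$ term absorbs \emph{all} contributions of order $\sum F^{ii}$ in $F^{ij}\nabla_{ij}\tilde{\varPsi}$; condition~\eqref{3I-R30}, via Proposition~\ref{prop5.2} and Corollary~\ref{cor1.10}, then disposes of the remaining constant for $|\lambda|$ large, and the rest of Section~\ref{hess-b} goes through unchanged. You extract only $F^{ij}\nabla_{ij}(u-\ul{u})\le-\delta_0$, a bounded negative constant, and then assert that \eqref{3I-R30}, through the quadratic term $\sum f_i\lambda_i^2$, ``dominates all remaining terms of order $\sum F^{ii}$''. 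This is false: \eqref{3I-R30} does \emph{not} say $\sum f_i\lambda_i^2/\sum f_i\to\infty$. Already for $n=2$ and $f=\sigma_2^{1/2}$ one has $\sum f_i\lambda_i^2/\sum f_i\equiv\sigma^2$ on $\partial\Gamma^{\sigma}$, so the quadratic term cannot absorb $\sum F^{ii}$. (Ironically, your own perturbation $\ul{u}\mapsto\ul{u}+\varepsilon_0(\rho^2-\delta^2)$, if computed directly rather than through the concavity comparison you invoke, yields exactly $F^{ij}\nabla_{ij}(u-\ul{u}-\varepsilon_0\rho^2)\le-\varepsilon_0\sum F^{ii}$, which is the paper's inequality.)

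Your added normal-direction term has a separate, independent problem: on $\partial M$ the quantity $\big(\nabla_n(u-\varphi)-\nabla_n(u-\varphi)(x_0)\big)^2$ is not known to be $O(\rho^2)$ without a Lipschitz bound on $\nabla_n u$ along $\partial M$---that is, without a tangential second-derivative estimate on $u$, which is part of what you are trying to prove. So the claimed boundary inequality $\varPsi'\ge0$ on $\partial M_\delta$ is circular.
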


\begin{proof}
In the function $\varPsi$ defined in \eqref{hess-E176} we replace $v$ by $(u - \ul{u})$
and call this new function $\tilde{\varPsi}$.
Since $\ul{u}$ is an admissible subsolution, by the concavity of $f$
there exists $\epsilon > 0$ such that
\[ F^{ij} \nabla_{ij} (\ul{u} - u) \geq \epsilon \sum F^{ii} - C. \]
Applying Proposition~\ref{prop5.2} and
Corollary~\ref{cor1.10}, by assumption~\eqref {3I-R30}  we may
choose  $A_1 \gg A_2 \gg A_3 \gg 1$ as before such that
\[ F^{ij} \nabla_{ij} \tilde{\varPsi} \leq - C \Big(1 + \sum f_i (1 + \lambda_i^2)\Big) \]
for any $C >0$ when $|\lambda|$ is sufficiently large.
The rest of the proof is now same as that of Theorem~\ref{3I-th4b}.
\end{proof}

\subsection{The gradient estimates}
Building upon the estimates in Theorems~\ref{3I-th4} and ~\ref{3I-th4b} with the aid of
Evans-Krylov theorem, one needs to derive {\em a prior} $C^1$ estimates
in order to establish existence of solutions to equation~\eqref{3I-10} either
on closed manifolds or for the Dirichlet problem on manifolds with boundary, using
standard analytic tools such as the continuity methods and degree arguments.
It seems an interesting question whether one can prove gradient
estimates under assumption~\eqref{3I-200}.
We wish to come back to the problem in future work. Here we only list some results that
were more or less already known to Li \cite{LiYY90} and Urbas~\cite{Urbas02}.

\begin{proposition}
\label{3I-propositio-R20}
Let $u \in C^3 (\bM)$ be an admissible solution of equation~\eqref{3I-10}
where $\psi \in C^1 (\bM)$. Suppose $f$ satisfies \eqref{3I-20}-\eqref{3I-40}.
Then
\begin{equation}
\label{3I-R60}
 \max_{\bM} |\nabla u| \leq C \big(1 + \max_{\partial M} |\nabla u|\big)
\end{equation}
where $C$ depends on $|u|_{C^0 (\bM)}$, under any of the following
additional assumptions:
{\bf (i)} $\Gamma = \Gamma_n^+$;
{\bf (ii$'$)} \eqref{3I-200}, $\psi_u \geq 0$ and that $(M, g)$ has nonnegative
sectional curvature;
{\bf (iii$'$)} \eqref{3I-55} and \eqref{3I-R40} for $|\lambda|$ sufficiently
large.
\end{proposition}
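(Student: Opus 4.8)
The plan is to carry out the classical interior gradient estimate by the maximum principle, essentially as in Li~\cite{LiYY90} and Urbas~\cite{Urbas02}, checking that each of the three additional hypotheses is exactly what is needed to absorb the curvature and lower-order terms. Consider the test function
\[ W = |\nabla u|^2 e^{\phi(u)}, \]
where $\phi$ is a one-variable function chosen with $\phi' > 0$ and $\phi'' \geq 2(\phi')^2$ on $[\inf_{\bM} u, \sup_{\bM} u]$ (such $\phi$ exists with $\phi', \phi''$ bounded in terms of $|u|_{C^0(\bM)}$), to which, in cases~\textbf{(ii$'$)} and \textbf{(iii$'$)}, one adds a term $a(\ul{u} - u)$ in the exponent so as to exploit \eqref{3I-200} through Theorem~\ref{3I-th3'}, exactly as in Section~\ref{hess-g}. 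Since $e^{\phi}$ is bounded above and below it suffices to bound $W$; if its maximum over $\bM$ is attained on $\partial M$ then \eqref{3I-R60} follows at once, so assume it is attained at an interior point $x_0 \in M$ with $|\nabla u|(x_0)$ large. Choose orthonormal local frames about $x_0$ with $\nabla_i e_j = 0$ and $e_1 = \nabla u / |\nabla u|$ at $x_0$. The first-order condition $\nabla W = 0$ reads $\nabla_i |\nabla u|^2 = -\phi' |\nabla u|^2 \nabla_i u$, which at $x_0$ gives $\nabla_{11} u = -\tfrac12 \phi' |\nabla u|^2$ and hence
\[ U_{11} = -\tfrac12 \phi' |\nabla u|^2 + \chi_{11}; \]
since $\phi' > 0$, $U_{11}$ is large and negative once $|\nabla u|(x_0)$ is large. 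In case~\textbf{(i)}, admissibility for $\Gamma = \Gamma_n^+$ makes $\{U_{ij}\}$ positive definite, so $U_{11} > 0$ — a contradiction, whence $|\nabla u|(x_0) \leq C$ with no further work.

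In the remaining cases, $\lambda_{\min}[\{U_{ij}(x_0)\}] \leq U_{11}$ is large and negative. Now I would differentiate the equation once, $F^{ij}\nabla_k U_{ij} = \nabla_k\psi$, commute covariant derivatives in the third-order terms by \eqref{hess-A70}, and use the identity $F^{ij}U_{ik}U_{kj} = \sum f_i\lambda_i^2$ together with $F^{ij}\nabla_{ik}u\,\nabla_{jk}u \geq \tfrac12\sum f_i\lambda_i^2 - C\sum f_i$; discarding the nonnegative term $(\phi'' - 2(\phi')^2)|\nabla u|^2 F^{ij}\nabla_i u\,\nabla_j u$, the inequality $F^{ij}\nabla_{ij} W \leq 0$ becomes, schematically,
\[ \sum f_i\lambda_i^2 + |\nabla u|^2\phi'\sum f_i\lambda_i + \mathcal{R} + 2\psi_u|\nabla u|^2 \;\leq\; C(1 + |\nabla u|^2)\big(1 + \sum f_i\big), \]
where $\mathcal{R}$ is the curvature term produced by the commutation (of the form $\sum_i f_i\langle R(e_i,\nabla u)e_i,\nabla u\rangle$ up to bounded factors, so $|\mathcal{R}| \leq C|\nabla u|^2\sum f_i$). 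The key good term is $\sum f_i\lambda_i^2$: because $\lambda_{\min}$ is large and negative and, by concavity of $f$, the derivative of $f$ in the most negative direction is the largest, one has $\sum f_i\lambda_i^2 \geq c\,|\nabla u|^4\sum f_i$ when $|\nabla u|(x_0)$ is large, which is strong enough to beat the right-hand side. The term $|\nabla u|^2\phi'\sum f_i\lambda_i$ is harmless: under \eqref{3I-55} it has the favorable sign (as $\phi' > 0$), and in general Corollary~\ref{cor1.10} (with Lemma~\ref{lem1.10} when $\sum f_i\lambda_i < 0$) bounds $|\phi'|\sum f_i|\lambda_i| \leq \tfrac12\sum_{i\neq r}f_i\lambda_i^2 + C(1 + \sum f_i)$, absorbed by $\tfrac12\sum f_i\lambda_i^2$.

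What remains, and where the two hypotheses are actually used, is the control of $\mathcal{R}$ and of $2\psi_u|\nabla u|^2$. In case~\textbf{(ii$'$)}, nonnegative sectional curvature makes $\mathcal{R} \geq 0$ and $\psi_u \geq 0$ makes the other term $\geq 0$, so both move to the favorable side and the inequality becomes impossible once $|\nabla u|(x_0)$ is large; here \eqref{3I-200} is used only through the auxiliary $a(\ul{u} - u)$ term, as in the proof of \eqref{hess-a10}. In case~\textbf{(iii$'$)}, \eqref{3I-55} already disposed of the $\phi'\sum f_i\lambda_i$ term, and \eqref{3I-R40}, valid for $|\lambda|$ large (which holds here since $|U_{11}|$ is large), bounds $f_j \geq \delta_0\sum f_i$ whenever $\lambda_j < 0$; this, with Lemma~\ref{lem1.10} and Corollary~\ref{cor1.10}, is what keeps the weights $f_i$ from degenerating in the negative directions and lets $\sum f_i\lambda_i^2$ dominate $|\mathcal{R}|$ and the remaining $O(|\nabla u|^2(1 + \sum f_i))$ error. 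I expect this last step — quantifying how the negative eigenvalue forced by the first-order condition makes $\sum f_i\lambda_i^2$ large relative to $\sum f_i$, uniformly, without any Hessian bound — to be the main obstacle; it is carried out in \cite{LiYY90, Urbas02} under precisely these structure conditions. In all three cases one obtains $|\nabla u|(x_0) \leq C$, which proves \eqref{3I-R60}.
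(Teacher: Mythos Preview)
Your treatment of cases {\bf (i)} and {\bf (iii$'$)} is essentially correct and in line with the paper: for {\bf (i)} the paper uses the even simpler test function $Au + |\nabla u|^2$ (the first-order condition then reads $\nabla_k u\,(A\delta_{ki} + 2\nabla_{ki}u) = 0$, and admissibility in $\Gamma_n^+$ makes the matrix positive definite for $A$ large, forcing $\nabla u(x_0)=0$), but your contradiction via $U_{11}>0$ is equivalent. For {\bf (iii$'$)} the paper simply observes that \eqref{3I-55} implies \eqref{3I-R20'} and defers to Urbas~\cite{Urbas02}; your sketch is compatible with that.

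The gap is in case {\bf (ii$'$)}. Your schematic inequality contains the term $|\nabla u|^2\,\phi'\sum f_i\lambda_i$, and you propose to control it by Corollary~\ref{cor1.10}. This does not close: first, the proof of Corollary~\ref{cor1.10} in the case $\lambda_r<0$ invokes Lemma~\ref{lem1.10}, which \emph{requires} \eqref{3I-55}, an assumption you do not have in case {\bf (ii$'$)}. Second, and more seriously, even if the corollary were available, the bound you quote is for $|\phi'|\sum f_i|\lambda_i|$, while the term you must absorb carries an additional factor $|\nabla u|^2$; after multiplying through, the piece $\tfrac12|\nabla u|^2\sum_{i\neq r}f_i\lambda_i^2$ cannot be swallowed by the good term $\tfrac12\sum f_i\lambda_i^2$. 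Using Cauchy--Schwarz instead produces a term $\tfrac12(\phi')^2|\nabla u|^4\sum f_i$ of exactly the same order as your key lower bound $\sum f_i\lambda_i^2 \geq c|\nabla u|^4\sum f_i$, so the constants do not separate. The interaction with the auxiliary $a(\ul u - u)$ makes this worse: for $a$ large enough to exploit Theorem~\ref{3I-th3'}, the first-order condition gives $U_{11}\approx -\tfrac12(\phi'-a)|\nabla u|^2$, which is large \emph{positive} rather than negative, undermining your ``$\lambda_{\min}$ large negative'' mechanism.

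The paper avoids all of this in case {\bf (ii$'$)} by choosing the exponent $\phi = A(\ul u - u)^2$ with $A>0$ \emph{small} and no $\phi(u)$ part at all. This eliminates the $\phi'\sum f_i\lambda_i$ term. After dropping the favorable terms $F^{ij}\nabla_{ik}u\,\nabla_{jk}u$, the curvature contribution (nonnegative sectional curvature) and $\psi_u|\nabla u|^2$, one obtains \eqref{3I-R70}, and the argument then splits: if $|\lambda[\nabla^2 u+\chi](x_0)|\geq R$ one applies Theorem~\ref{3I-th3'}, while if $|\lambda|\leq R$ the operator is uniformly elliptic and the quadratic gradient term $2A(1-2A)F^{ij}\nabla_i(\ul u - u)\nabla_j(\ul u - u)\geq c|\nabla u|^2$ yields the bound directly.
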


\begin{proof}
Consider case {\bf (i)}: $\Gamma = \Gamma_n^+$. For fixed $A > 0$ suppose
$A u + |\nabla u|^2$ has a maximum at an interior point $x_0 \in M$.
Then $A \nabla_i u + 2 \nabla_k u \nabla_{ki} u
= \nabla_k u (A \delta_{ki} + \nabla_{ki} u) = 0$ at $x_0$
for all $1 \leq i \leq n$.
This implies $\nabla u (x_0)= 0$  when $A$ is sufficiently large. Therefore,
\[ \sup_M |\nabla u|^2 \leq A \Big(\sup_{\partial M} u - \inf_M u\Big)
      + \sup_{\partial M} |\nabla u|^2. \]

Case {\bf (iii$'$)} was proved by Urbas~\cite{Urbas02} under the additional
assumption \eqref{3I-R20'} which is implied by \eqref{3I-55}. Indeed,
by the concavity of $f$ and \eqref{3I-55},
\[ A \sum f_{\lambda_i} (\lambda) \geq
     \sum f_{\lambda_i} (\lambda) \lambda_i +  f (A {\bf 1}) - f (\lambda)
     \geq  f (A {\bf 1}) - \sigma \]
for any $\lambda \in \Gamma$, $f (\lambda) = \sigma$.
Fixing $A$ sufficiently large
gives \eqref{3I-R20'}.

Case {\bf (ii$'$)}.
Gradient estimates were established by Li \cite{LiYY90} on closed
manifolds with nonnegative sectional curvature under the additional assumptions
\eqref{3I-R10} and \eqref{3I-R20}.
His proof can be modified to replace \eqref{3I-R10} and \eqref{3I-R20}
by \eqref{3I-200}. We only outline the proof.

Suppose  $|\nabla u|^2 e^{\phi}$
achieves a maximum at an interior point $x_0 \in M$.
Then at $x_0$,
\[ \frac{2 \nabla_k u \nabla_{ik} u}{|\nabla u|^2} + \nabla_i \phi = 0, \]
\[ 2 F^{ij} (\nabla_k u \nabla_{jik} + \nabla_{ik} u \nabla_{jk} u)
   + |\nabla u|^2 F^{ij} (\nabla_{ij} \phi - \nabla_i \phi \nabla_j \phi) \leq 0. \]
Following \cite{LiYY90} we use the nonnegative sectional curvature condition to
derive
\begin{equation}
\label{3I-R70}
 |\nabla u|^2 F^{ij} (\nabla_{ij} \phi - \nabla_i \phi \nabla_j \phi)
  \leq C |\nabla u| - \psi_u |\nabla u|^2.
\end{equation}
Now let $\phi = A (\ul{u} - u)^2 $ and fix $A > 0$ sufficiently small.
By \eqref{3I-200} and Theorem~\ref{3I-th3'} we derive a bound
$|\nabla u (x_0)| \leq C$
if $|\lambda [\nabla^2 u + \chi] (x_0)| \geq R$ for $R$ sufficiently large.

Suppose $|\lambda [\nabla^2 u + \chi] (x_0)| \leq R$. Then by \eqref{3I-20} and
\eqref{3I-40}, there exists $C_1 > 0$ depending on $R$ such that at $x_0$,
\[ \frac{g^{-1}}{C_1} \leq \{F^{ij}\} \leq C_1 g^{-1}. \]
From \eqref{3I-R70},
\[ \begin{aligned}
  \frac{C}{|\nabla u|}
  \,& \geq 2 A F^{ij} \nabla_{ij} (\ul{u} - u)
           + 2 A (1 - 2 A) F^{ij} \nabla_i (\ul{u} - u) \nabla_j (\ul{u} - u) \\
  \,& \geq 2 A (1 - 2 A) C_1^{-1} |\nabla (\ul{u} - u)|^2 - C A.
  \end{aligned}  \]
We derive a bound for $|\nabla u (x_0)|$ again.
\end{proof}

\subsection{An example}
\label{3I-R3}
Consider the function
\[ P_k (\lambda) := \prod_{i_1 < \cdots < i_k}
(\lambda_{i_1} + \cdots + \lambda_{i_k}), \;\; 1 \leq k \leq n\]
defined in the cone
\[ \mathcal{P}_k : = \{\lambda \in \bfR^n:
      \lambda_{i_1} + \cdots + \lambda_{i_k} > 0\}. \]
Obviously,
\[ \sup_{\partial \mathcal{P}_k} P_k = 0. \]

Let $f = \log P_k$. Then
\[ \frac{\partial f}{\partial \lambda_i} = \sum_{i_2 < \cdots < i_k; i_l \neq i}
    \frac{1}{\lambda_i + \lambda_{i_2} + \cdots + \lambda_{i_{k}}},  \]
\[ \frac{\partial^2 f}{\partial\lambda_i  \partial\lambda_j}
  = - \sum_{i_3 < \cdots < i_k; i_l \neq i, j}
   \frac{1}{(\lambda_i + \lambda_j + \lambda_{i_3} + \cdots + \lambda_{i_{k}})^2}. \]

Therefore $f = \log P_2$ satisfies \eqref{3I-20} and \eqref{3I-30}
in $\mathcal{P}_2$. Moreover, $\Gamma^{\sigma} \equiv \{P_2 > \sigma\}$ is strictly
convex and $\mathcal{C}_{\sigma}^+ = \mathcal{P}_2$.
Consequently, Corollary~\ref{3I-cor-R2} holds for $f =  P_2$.

In ~\cite{HS09} Huisken and Sinestrari studied the mean curvature flow of
hypersurfaces with principal curvatures
$(\kappa_1, \ldots, \kappa_n) \in \mathcal{P}_2$;
they call such hypersurfaces {\em two-convex}.

There seem interesting cases among the quotients $P_k/P_l$ but the situation is
more complicated. We hope to discuss them in future work.
Note that $P_1 = \sigma_n$, $P_n = \sigma_1$.

\bigskip

\small

\end{document}